\definecolor{green}{rgb}{0,0.8,0} 
\definecolor{deepgreen}{cmyk}{1,0,1,0.5}
\newcommand{\pmat}[1]{\begin{pmatrix} #1 \end{pmatrix}}
\newcommand{\Del}[1]{}
\numberwithin{equation}{section}
\newtheorem{theorem}{Theorem}[section]
\newtheorem{lemma}[theorem]{Lemma}
\newtheorem{proposition}[theorem]{Proposition}
\newtheorem{remark}[theorem]{Remark}
\newcommand{\angles}[2]{\langle #1,#2\rangle}
\renewcommand{\div}{\mathrm{div}\,}
\newcommand{\tr}{\textrm{tr}}
\newcommand{\barg}{{\overline g}}
\newcommand{\barA}{{\overline A}}
\newcommand{\barE}{{\overline E}}
\newcommand{\barH}{{\overline H}}
\newcommand{\barR}{{\overline R}}
\newcommand{\barGamma}{\overline{\Gamma}}
\renewcommand{\hbar}{{\underline h}}
\newcommand{\bbR}{\mathbb R}
\newcommand{\calA}{\mathcal A}
\newcommand{\calB}{\mathcal B}
\newcommand{\calC}{\mathcal C}
\newcommand{\calE}{\mathcal E}
\newcommand{\calF}{\mathcal F}
\newcommand{\calG}{\mathcal G}
\newcommand{\calH}{\mathcal H}
\newcommand{\calI}{\mathcal I}
\newcommand{\calJ}{\mathcal J}
\newcommand{\calK}{\mathcal K}
\newcommand{\calL}{\mathcal L}
\newcommand{\calP}{\mathcal P}
\newcommand{\calT}{\mathcal T}
\newcommand{\frakn}{\mathfrak n}
\newcommand{\tila}{{\tilde{a}}}
\newcommand{\tiln}{{\tilde{n}}}
\newcommand{\tilp}{{\tilde{p}}}
\newcommand{\tilB}{{\tilde{B}}}
\newcommand{\tilD}{{\tilde{D}}}
\newcommand{\tilF}{{\tilde{F}}}
\newcommand{\tilG}{{\tilde{G}}}
\newcommand{\tilI}{{\tilde{I}}}
\newcommand{\tilJ}{{\tilde{J}}}
\newcommand{\tilK}{{\tilde{K}}}
\newcommand{\tilP}{{\tilde{P}}}
\newcommand{\tilR}{{\tilde{R}}}
\newcommand{\tilS}{{\tilde{S}}}
\newcommand{\tilV}{{\tilde{V}}}
\newcommand{\tilX}{{\tilde{X}}}
\newcommand{\tilY}{{\tilde{Y}}}
\newcommand{\hate}{{\hat{e}}}
\newcommand{\hatP}{{\hat{P}}}
\newcommand{\hatT}{{\hat{T}}}
\newcommand{\hatV}{{\hat{V}}}
\newcommand{\scE}{{\mathscr{E}}}
\newcommand{\curl}{\mathrm{curl}\,}
\newcommand{\snabla}{{\slashed{\nabla}}}
\newcommand{\ud}{\mathrm{d}}
\newcommand{\tiltheta}{{\tilde{\theta}}}
\newsavebox{\@brx}
\newcommand{\llangle}[1][]{\savebox{\@brx}{\(\m@th{#1\langle}\)}%
  \mathopen{\copy\@brx\kern-0.5\wd\@brx\usebox{\@brx}}}
\newcommand{\rrangle}[1][]{\savebox{\@brx}{\(\m@th{#1\rangle}\)}%
  \mathclose{\copy\@brx\kern-0.5\wd\@brx\usebox{\@brx}}}
\newcommand{\bangles}[2]{\llangle #1,#2\rrangle}
\newcommand{\Ric}{{\mathrm{Ric}}}
\newcommand{\barnabla}{{\overline{\nabla}}}
\newcommand{\hatTheta}{{\hat{\Theta}}}
\newcommand{\tilcalC}{{\tilde{\calC}}}
\newcommand{\tiltilcalC}{{\tilde{\tilde{\calC}}}}
\newcommand{\chD}{{\check{D}}}
\newcommand{\chE}{{\check{E}}}
\newcommand{\chF}{{\check{F}}}
\newcommand{\chH}{{\check{H}}}
\newcommand{\chcalI}{{\check{\calI}}}
\newcommand{\chcalJ}{{\check{\calJ}}}
\newcommand{\chGamma}{{\check{\Gamma}}}
\newcommand{\chTheta}{{\check{\Theta}}}
\newcommand{\chX}{{\check{X}}}
\newcommand{\chcurl}{{\check{\curl}}}
\newcommand{\chdiv}{{\check{\div}}}
\newcommand{\chR}{{\check{R}}}
\newcommand{\chK}{{\check{K}}}
\newcommand{\chW}{{\check{W}}}
\newcommand{\chcalK}{{\check{\calK}}}
\newcommand{\chcalB}{{\check{\calB}}}
\newcommand{\che}{{\check{e}}}
\newcommand{\chgamma}{{\check{\gamma}}}
\newcommand{\cha}{{\check{a}}}
\newcommand{\chnabla}{\check{\nabla}}
\newcommand{\tilOmega}{{\tilde{\Omega}}}
\newcommand{\tilDelta}{{\tilde{\Delta}}}
\begin{document}
\title[Free Boundary Hard Phase Model]{Well-posedness for the free boundary hard phase model in general relativity}
\author{Shuang Miao
	\and  Sohrab Shahshahani}
\thanks{S. Miao was supported by the Fundamental Research Funds for the Central Universities in China and NSFC grant 12071360.  S. Shahshahani was supported by the Simons Foundation grant 639284.}
\maketitle
\begin{abstract}
	The hard phase model describes a relativistic barotropic and irrotational fluid with sound speed equal to the speed of light. In the framework of general relativity, the fluid, as a matter field, affects the geometry of the background spacetime. Therefore the motion of the fluid must be coupled to the Einstein equations which describe the structure of the underlying spacetime. In this work we prove a priori estimates and well-posedness in Sobolev spaces for this model with free boundary. Estimates for the curvature are derived using the Bianchi equations in a frame that is parallel transported by the fluid velocity. The fluid velocity is also decomposed with respect to this parallel frame, and its components are estimated using a coupled interior-boundary system of wave equations.
\end{abstract}
\section{Introduction}
Einstein's equations coupled to an ideal fluid are
\begin{align}\label{eq:Einstein1}
\begin{split}
G_{\mu\nu}=T_{\mu\nu},
\end{split}
\end{align}
where $G$ is the Einstein tensor
\begin{align*}
\begin{split}
G=\Ric -\frac{1}{2}S g,
\end{split}
\end{align*}
with $g$ the metric, $S$ the scalar curvature, and $\Ric$ the Ricci curvature. $T$ is the energy momentum tensor, which in the fluid domain $\Omega$ is defined as
\begin{align*}
\begin{split}
T^{\mu\nu}=(\rho+p)u^\mu u^\nu+p(g^{-1})^{\mu\nu},\qquad \mathrm{in~}\Omega,
\end{split}
\end{align*}
and in the exterior $\Omega^c$ as
\begin{align*}
\begin{split}
T^{\mu\nu}=0,\qquad \mathrm{in~}\Omega^c.
\end{split}
\end{align*}
Here $u$ is a future directed unit timelike vector\footnote{That is, $g_{\alpha\beta}u^\alpha u^\beta =-1$, $u^0>0$.}, $\rho$ is the energy density, and $p$ is the fluid pressure. The motion of the fluid, which occupies the domain $\Omega$, is governed by the conservation laws
\begin{align}
&\nabla_\mu T^{\mu\nu}=0,\label{eq:relEuler1}\\
&\nabla_\mu I^\mu =0,\label{eq:relEuler2}
\end{align}
where $\nabla$ is the Levi-Civita connection of the metric $g$ and $I$ is the particle current
\begin{align*}
\begin{split}
I^\mu = \frakn u^\mu.
\end{split}
\end{align*}
Here $\frakn$ denotes the number density of particles. With $s$ denoting the entropy per particle and $\theta$ the temperature, the laws of thermodynamics state that $p$ and $\rho$ are non-negative functions of $\frakn$ and $s$, and 
\begin{align*}
\begin{split}
\theta=\frac{1}{\frakn}\frac{\partial \rho}{\partial s},\qquad p= \frakn \frac{\partial\rho}{\partial\frakn}-\rho.
\end{split}
\end{align*}
The sound speed $\eta$, which is assumed to satisfy $0\leq \eta \leq 1$, is defined by
\begin{align*}
\begin{split}
\eta^2=\big(\frac{\partial p }{\partial \rho}\big)_s.
\end{split}
\end{align*}
Here the speed of light is normalized to be $1$. The fluid is said to be barotropic if the pressure is a function of the density
\begin{align}\label{eq:pfrho}
\begin{split}
p=f(\rho).
\end{split}
\end{align}
For a barotropic fluid the equations \eqref{eq:relEuler1} and \eqref{eq:relEuler2} decouple, and $p$ and $\rho$ can be written as functions of a single variable $\sigma$ defined by
\begin{align*}
\begin{split}
p+\rho=\sigma \frac{\ud \rho}{\ud \sigma}.
\end{split}
\end{align*}
Assuming further that $f$ appearing in \eqref{eq:pfrho} is strictly increasing and that 
\begin{align*}
\begin{split}
F(p)=\int_0^p \frac{\ud p'}{p'+\rho}
\end{split}
\end{align*}
is finite, we define the (renormalized)  fluid velocity $V$ as
\begin{align*}
\begin{split}
V=\|V\| u
\end{split}
\end{align*}
where $\|V\|:=e^F$. Then, with
\begin{align*}
\begin{split}
G=\frac{p+\rho}{\|V\|^2},
\end{split}
\end{align*}
equation \eqref{eq:relEuler1} reduces to the following equations in the fluid domain $\Omega$ (see \cite{Ch-hp1} for a derivation)
\begin{align}\label{eq:relbar1}
\begin{split}
&\nabla_V V+\frac{1}{2}\nabla (\|V\|^2)=0,\\
&\nabla_\mu (G(\|V\|)V^\mu)=0.
\end{split}
\end{align}
The \emph{hard phase} model refers to a barotropic fluid which is in addition irrotational and satisfies $\eta=1$. Here irrotational means that $dV=0$, or equivalently that $V$ can be written as the gradient of a scalar function,
\begin{align*}
\begin{split}
V=\nabla\phi.
\end{split}
\end{align*}
The hard phase model is an idealized model for the physical situation where, during the gravitational collapse of the degenerate core of a massive star, the mass-energy density exceeds the nuclear saturation density. See \cite{Ch-video,Ch-hp1, F-P, Lich-book, Rez-book, Walecka, Zeldovich}. Under the normalization that the nuclear saturation density is 1, as derived in \cite{Ch-hp1}, the equations of state relating $p$ and $\rho$ to $\sigma$ in the hard phase model are
\begin{align*}
\begin{split}
p=\frac{1}{2}(\sigma^2-1)
\end{split}
\end{align*}
and
\begin{align*}
\begin{split}
\rho=\frac{1}{2}(\sigma^2+1),
\end{split}
\end{align*}
and $G$ and $\|V\|$ are given by
\begin{align*}
\begin{split}
\|V\|=\sigma,\qquad G=1.
\end{split}
\end{align*}
In this case, $\sigma$ is the enthalpy and in the fluid domain $\Omega$ satisfies $\sigma^2>1$. The equations \eqref{eq:relbar1} then reduce to
\begin{align*}
\begin{split}
\nabla_\mu V^\mu=0,\qquad -V_\mu V^\mu = \sigma^2,\qquad \mathrm{in~}\Omega.
\end{split}
\end{align*}
In terms of the velocity potential $\phi$, this is equivalent to
\begin{align*}
\begin{split}
\Box \phi =0,\qquad -\nabla_\mu\phi \nabla^\mu\phi=\sigma^2,\qquad \mathrm{in~}\Omega.
\end{split}
\end{align*}
Moreover, the energy momentum tensor $T$ and the particle current $I$ are given by
\begin{align}\label{eq:hpemtensor1}
\begin{split}
T^{\mu\nu}= V^\mu V^\nu-\frac{1}{2}(V_\alpha V^\alpha+1)(g^{-1})^{\mu\nu},\qquad I^\mu = V^\mu,\qquad \mathrm{in~}\Omega.
\end{split}
\end{align}
In terms of the velocity potential $\phi$, we get
\begin{align}\label{eq:Thp1}
T^{\mu\nu}=
\begin{cases}
\nabla^\mu\phi \nabla^\nu\phi-\frac{1}{2}(\nabla_\alpha\phi\nabla^\alpha\phi)(g^{-1})^{\mu\nu}\qquad&\mathrm{in~}\Omega\\
0\qquad&\mathrm{in~}\Omega^c
\end{cases}.
\end{align}

In this paper we study Einstein's equations coupled to a hard phase fluid with free boundary surrounded by vacuum. That the boundary is free refers to the fact that the fluid domain is a priori unknown and has to be determined by the boundary conditions. These boundary conditions are (here $\calT\partial\Omega$ denotes the tangent space to the boundary)
\begin{align*}
\begin{split}
&\sigma^2=1\qquad \mathrm{on~}\partial\Omega,\\
&V\vert_{\partial\Omega} \in \calT \partial\Omega.
\end{split}
\end{align*}
The first condition is equivalent to $p=0$ on $\partial\Omega$ and is the analogue of the vanishing of surface tension in the Newtonian setting. The second condition states that a fluid particle on the boundary will remain on the boundary for all time (recall that $V$ is parallel to the fluid velocity $u$). The restriction to the hard phase state is both because of the independent interest of this model as discussed above, and because, as described in \cite{MSW1}, this model captures the important mathematical features of more general barotropic equations of state. We refer the reader to \cite{MSW1}, in particular Appendix A, and \cite{YWang1} for more details about general barotropic fluids in the case of relativistic fluids in Minkowski spacetime. Using the notation
\begin{align*}
\begin{split}
\sigma^2=-V^\mu V_\mu,
\end{split}
\end{align*}
the hard phase fluid equations can be summarized as
\begin{align}\label{eq:relEuler3}
\begin{cases}
\nabla_\mu V^\mu=0,\quad dV=0\quad&\mathrm{in~}\Omega\\
\sigma^2=1\quad&\mathrm{on~}\partial\Omega\\
V\vert_{\partial\Omega}\in\calT\partial\Omega
\end{cases}.
\end{align}

Equation \eqref{eq:relEuler3} describe the motion of the fluid for a given metric. Next, we use \eqref{eq:Einstein1} to derive working equations for the evolution of the metric which is coupled to the fluid.
As usual for the Einstein equations, in view of the diffeomorphism invariance of the equations, we need to fix coordinates or a gauge in order to study well-posedness. In this work we choose Lagrangian coordinates, $(t,x)=(x^0,x^1,x^2,x^3)$ so that $\frac{V}{\|V\|}=\partial_t$ in the fluid domain. Since $V$ is required to be tangential to $\partial\Omega$, by definition the fluid domain (restricted to $\{0\leq x^0\leq T\}$) in our coordinates is given by the product
\begin{align}\label{eq:Omegalag1}
\begin{split}
\Omega_0^T=[0,T]\times \Omega_0,
\end{split}
\end{align}
where in general
\begin{align*}
\begin{split}
\Omega_t=\Omega\cap \{x^0=t\},
\end{split}
\end{align*}
and $\Omega_0\subseteq\{x^0=0\}$ is the initial domain, which is assumed to be diffeomorphic to the unit ball. We also use the notation
\begin{align*}
\begin{split}
\partial\Omega_0^T=[0,T]\times \partial\Omega_t,
\end{split}
\end{align*}
and when it is clear from the context, write $\Omega$ and $\partial\Omega$ for $\Omega_0^T$ and $\partial\Omega_0^T$, respectively. The fluid velocity $V$ is extended to the exterior $\Omega^c$ using Sobolev extensions such that $V$ is always parallel to $\partial_t$, and $V=\partial_t$ outside a compact set. Note that since our fluid domain is in product form this can be achieved by Sobolev extension on each slice, and that $\partial_t$ and the extension commute. We work with an orthonormal frame $\{e_I\}_{I=0,1,2,3}$ with 
\begin{align*}
\begin{split}
g(e_I,e_I)=\epsilon_I,\qquad \epsilon_0=-1,~\epsilon_\tilI=1,~\tilI=1,2,3.
\end{split}
\end{align*}
We choose $\{e_I\}$ such that initially
\begin{align*}
\begin{split}
e_\tilI\vert_{t=0}\mathrm{~tangent~to~} \{t=0\},\quad \tilI=1,2,3,
\end{split}
\end{align*}
and require that the frame elements be parallel transported by the fluid velocity, that is, $\nabla_Ve_I=0$. The connection coefficients for the frame will be denoted by $\Gamma_{IJ}^K$, that is,
\begin{align*}
\begin{split}
\nabla_{e_I}e_J=\sum_K\Gamma_{IJ}^Ke_K.
\end{split}
\end{align*}
We write the fluid velocity $V$ as
\begin{align*}
\begin{split}
V=\sum_{I}\Theta^I e_I,
\end{split}
\end{align*}
and introduce the renormalized version $\hatV= \frac{1}{\sqrt{\sigma^2}}V=\partial_t$, with
\begin{align*}
\begin{split}
\hatV= \hatTheta^I e_I,\qquad \hatTheta^I=\frac{1}{\sqrt{\sigma^2}}\Theta^I.
\end{split}
\end{align*}
In terms of $\Theta^I$, equations \eqref{eq:relEuler3} become (the tangency condition being already incorporated into the definition \eqref{eq:Omegalag1} of $\Omega$)
\begin{equation}\label{eq:relEuler4}
\begin{cases}
\nabla_I \Theta^I=0,\quad&\mathrm{in~}\Omega\\
\nabla_I\Theta_J-\nabla_J\Theta_I=0\quad&\mathrm{in~}\Omega\\
\Theta_I\Theta^I=-1\quad&\mathrm{on~}\partial\Omega
\end{cases}.
\end{equation}
Here we have used the notation
\begin{align*}
\begin{split}
\nabla_J\Theta^I = D_J\Theta^I+\Gamma_{JK}^I\Theta^K,
\end{split}
\end{align*}
and
\begin{align*}
\begin{split}
\Theta_0=-\Theta^0,\qquad \Theta_\tilI=\Theta^\tilI,~\tilI=1,2,3.
\end{split}
\end{align*}
The frame components of the curvature tensor are denoted by
\begin{align*}
\begin{split}
R_{IJKL}=R(e_{I},e_{J},e_{K},e_{L}),
\end{split}
\end{align*}
and the Ricci tensor and scalar curvature are given, respectively, by
\begin{align*}
\begin{split}
R_{IJ}= \sum_K \epsilon_K R_{IKJK},
\end{split}
\end{align*}
and
\begin{align*}
\begin{split}
S=\sum_I \epsilon_I R_{II}.
\end{split}
\end{align*}
Note that in view of the discontinuity of the energy momentum tensor $T$ across the fluid boundary $\partial\Omega$, the curvature is an $L^2$ function, which, under suitable initial regularity assumptions, will be more regular in both $\Omega$ and $\Omega^c$. The condition $\nabla_V e_I=0$ yields transport equations for $e_I$ in terms of  $\Gamma_{IJ}^K$ and $\Theta^I$, and for $\Gamma_{IJ}^K$ in terms of $R_{IJKL}$ and $\Theta^I$. Indeed, as we will show below in Section \ref{subsec:frameeqs}, $e_I$ and $\Gamma_{IJ}^K$ satisfy
\begin{align}\label{eq:frame1}
\begin{split}
&\partial_t e_\tilI=-(D_{\tilI}\hatTheta^J) e_J - \hatTheta^J\Gamma_{\tilI J}^K e_K,\quad \tilI = 1,2,3,\\
&e_0= \frac{1}{\hatTheta^0}(\hatV-\hatTheta^\tilI e_\tilI),
\end{split}
\end{align}
and
\begin{align}\label{eq:connection1}
\begin{split}
&\partial_t \Gamma_{\tilI J}^K = \hatTheta^I(R^K_{\phantom{K}JI\tilI}-\Gamma_{MJ}^K\Gamma^M_{\tilI I})-\Gamma^K_{IJ}D_\tilI\hatTheta^I,\quad \tilI= 1,2,3,\\
&\Gamma^K_{0J}=-\frac{\hatTheta^\tilI}{\hatTheta^0}\Gamma^K_{\tilI J}.
\end{split}
\end{align}
Here and in what follows, $D_I$  denotes the derivative in the direction of $e_I$ applied to a scalar function,
\begin{align*}
\begin{split}
D_I f=e_{I}(f):= e_I^\mu \partial_\mu f.
\end{split}
\end{align*}
Finally, to write the Einstein equations \eqref{eq:Einstein1} in the frame, note that taking the trace of \eqref{eq:Einstein1} gives
\begin{align*}
\begin{split}
S= \Theta_K\Theta^K +2\quad \mathrm{in~}\Omega,\qquad S=0\quad \mathrm{in~}\Omega^c.
\end{split}
\end{align*}
The frame formulation of \eqref{eq:Einstein1} then becomes
\begin{align}\label{eq:Einstein2}
\begin{split}
R_{IJ}=\begin{cases}\Theta_I\Theta_I+\frac{1}{2}\delta_{IJ}\qquad&\mathrm{in~}\Omega\\ 0\qquad&\mathrm{in~}\Omega^c\end{cases}.
\end{split}
\end{align}
The Einstein-Euler equations in the frame as given in  \eqref{eq:relEuler4}, \eqref{eq:frame1}, \eqref{eq:connection1}, and \eqref{eq:Einstein2}, are the working equations in the remainder of this article.  In fact, as described in Section~\ref{subsec:frameeqs}, we will work with derived quasilinear equations based on the contracted differential Bianchi equations for the curvature, and based on a material derivative applied to \eqref{eq:relEuler4} for the fluid quantities.  As is common with Einstein's equations, we will use the data for Einstein's equations \eqref{eq:Einstein1}, which are assumed to satisfy the usual constraint equations, to construct compatible data for \eqref{eq:relEuler4}, \eqref{eq:frame1}, \eqref{eq:connection1}, \eqref{eq:Einstein2}. We then prove that our our solution to \eqref{eq:relEuler4}, \eqref{eq:frame1}, \eqref{eq:connection1}, \eqref{eq:Einstein2} satisfies \eqref{eq:Einstein1} for this choice of data.

Before stating the main result we need to discuss one more condition, which is the relativistic analogue of the Taylor sign condition. Since $\sigma^2$ is constant on the boundary, its gradient is normal to $\partial\Omega$. The relativistic Taylor sign condition is that
\begin{align}\label{eq:Taylor1}
\begin{split}
\nabla_\mu \sigma^2 \nabla^\mu \sigma^2\geq c_0^2 >0.
\end{split}
\end{align}
This condition, which will be used crucially in the analysis, is the analogue of the Newtonian Taylor sign condition, in which case its failure is known to lead to instabilities, cf. \cite{GTay1}. It will follow from our bootstrap assumptions that if \eqref{eq:Taylor1} holds initially, then it will hold during the evolution with $c_0^2$ replaced a slightly smaller constant. We refer the reader to \cite{MSW1} for more discussion on this point.

Returning to \eqref{eq:relEuler4}, \eqref{eq:frame1}, \eqref{eq:connection1}, \eqref{eq:Einstein2}, we  assume that initially, at $t=0$, the following regularity and compatibility conditions are satisfied:
\begin{align}\label{eq:regularity1}
\begin{split}
\partial_x^p\partial_t^k\Theta^I,~\partial_x^p\partial_t^{k+1}\sigma^2\in L^2(\Omega_t),\quad &k\leq M+1, ~2p+k\leq M+2,\\
\partial_t^k\sigma^2\in H^1_0(\Omega_t),\quad&k\leq M+1,\\
\partial_t^{M}R_{IJKL},~\partial\partial_t^{M-1}R_{IJKL},~\partial^p\partial_t^kR_{IJKL}\in L^2(\Omega_t)\cap L^2(\Omega^c_t),\quad&2p+k\leq M,\\
\partial_t^k R_{IJKL}\in L^2(\bbR^3),\quad&k\leq M,\\
\partial^p\partial_t^k e_I\in L^2(\Omega_t)\cap L^2(\Omega_t^c),\quad&2p+k\leq M+1,\\
e_{I}-\partial_{I}\in L^2(\Omega_t)\cap L^2(\Omega_t^c),\quad& I=0,1,2,3,\\
\partial^p\partial_t^k \Gamma_{IJ}^K\in L^2(\Omega_t)\cap L^2(\Omega_t^c),\quad&2p+k\leq M+1 .
\end{split}
\end{align}
and, with $\Theta_0^I=\Theta^I(0)$, 
\begin{align}\label{eq:compatibility1}
\begin{cases}
\sigma^2_0:=-\sum_I\epsilon_I(\Theta^I_0)^2\geq 1,\quad \Theta^0_0>0,\quad &\mathrm{in~} \Omega_0\\
\sigma_0^2=1,\quad&\mathrm{on~}\partial\Omega_0\\
\nabla_I \sigma_0^2 \nabla^I\sigma_0^2\geq c_0^2>0,\quad&\mathrm{on~}\partial\Omega_0
\end{cases}.
\end{align}
The following is the main theorem of this work. Besides the compatibility and regularity assumptions \eqref{eq:regularity1} and \eqref{eq:compatibility1}, we have also assumed extra initial compatibility conditions on the data given by \eqref{def vanishing fluid int} and \eqref{def vanishing geom int}. As will be explained in Section~\ref{subsec:frameeqs} below, these conditions, which we have stated in Section~\ref{subsec:frameeqs} below to avoid cumbersome notation at this point, are simply the assumption that the equations and the geometric properties of the curvature tensor (such as the symmetries) hold initially.
\begin{theorem}\label{thm:main1}
Suppose initially the regularity and compatibility conditions \eqref{eq:regularity1}, \eqref{eq:compatibility1}, \eqref{def vanishing fluid int}, and \eqref{def vanishing geom int} are satisfied with $M$ sufficiently large. Then there is $T>0$ and a unique solution to \eqref{eq:relEuler4}, \eqref{eq:frame1}, \eqref{eq:connection1}, \eqref{eq:Einstein2}, in $\Omega=[0,T]\times \Omega_0$ satisfying \eqref{eq:regularity1} for all $t\in[0,T]$, and $$u:=\frac{1}{\|V\|}V = \frac{1}{\sqrt{-\sum_J\epsilon_J(\Theta^J)^2}}\Theta^I e_I$$ and $g$ defined by $$(g^{-1})^{\mu\nu}:= -e_0^\mu e_0^\nu+\sum_{\tilI=1}^3e_\tilI^\mu e_\tilI^\nu,$$ satisfy equations \eqref{eq:Einstein1}, \eqref{eq:relEuler1}, \eqref{eq:relEuler2}.
\end{theorem}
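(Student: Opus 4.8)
The plan is to establish \thm{thm:main1} in four stages: (1) reduce the first order system \eqref{eq:relEuler4}, \eqref{eq:frame1}, \eqref{eq:connection1}, \eqref{eq:Einstein2} to an equivalent quasilinear hyperbolic system adapted to energy estimates; (2) prove a priori estimates for this derived system by a bootstrap argument; (3) construct a solution by iteration and prove uniqueness; (4) show that the solution of the derived system, for data compatible in the sense of \eqref{def vanishing fluid int} and \eqref{def vanishing geom int}, propagates the constraints and therefore yields an honest solution of \eqref{eq:Einstein1}, \eqref{eq:relEuler1}, \eqref{eq:relEuler2}. For stage (1), one applies $\nabla^I$ to the differential Bianchi identity and substitutes the algebraic relations \eqref{eq:Einstein2} to get wave equations $\Box_g R_{IJKL} = \mathcal{N}(R,\Gamma,\Theta,e)$ in $\Omega$ and in $\Omega^c$, coupled across $\partial\Omega$ by junction conditions built from the jump of $T$ and from \eqref{eq:Einstein2}; and one applies the material derivative ($\partial_t$ in Lagrangian coordinates) to the div--curl pair of \eqref{eq:relEuler4}, using the transport equations \eqref{eq:frame1} and \eqref{eq:connection1}, to obtain a wave equation for $\Theta^I$ in $\Omega$ together with a boundary condition obtained by differentiating $\sigma^2=1$ on $\partial\Omega$. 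In the energy identity for this last equation the relevant boundary quadratic form is coercive precisely because of the Taylor sign condition \eqref{eq:Taylor1}, which is the reason \eqref{eq:Taylor1} must be assumed (and, as noted, propagated with a slightly smaller constant).

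For stage (2), I would fix a hierarchy of energies $E_M(t)$ controlling exactly the $L^2$ quantities appearing in \eqref{eq:regularity1}: the curvature and its derivatives on $\Omega_t\cup\Omega^c_t$, the fluid components $\Theta^I$ and $\sigma^2$ and their derivatives on $\Omega_t$, and the frame $e_I$ and connection coefficients $\Gamma^K_{IJ}$ and their spatial derivatives on $\Omega_t\cup\Omega^c_t$. Under bootstrap assumptions (smallness of $E_M(t)$ above $E_M(0)$, $\sigma^2\ge 1$, $\Theta^0>0$, and \eqref{eq:Taylor1} with $c_0^2/2$) one proceeds as follows: the frame and connection are recovered by integrating the transport equations \eqref{eq:frame1}, \eqref{eq:connection1} in $t$, being careful that each spatial derivative of these equations couples to one further derivative of $\Theta$ and of $\Gamma$, so the derivative counts in \eqref{eq:regularity1} must be arranged so this closes; the curvature is estimated by the energy identity for the Bianchi wave system, handling the moving frame, the jump of $R$ across $\partial\Omega$, and the fact that at top order $R$ is only $L^2$; the fluid components are estimated by the energy identity for the wave equation for $\Theta$, with the boundary term absorbed via \eqref{eq:Taylor1}, and then upgraded from time-derivative control to full Sobolev control on each slice $\Omega_t$ by a div--curl elliptic estimate using the boundary conditions $V|_{\partial\Omega}\in\calT\partial\Omega$ and $\sigma^2=1$; finally $\sigma^2$ is recovered algebraically from $\sigma^2=-\Theta_I\Theta^I$. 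Closing the loop yields $E_M(t)\le CE_M(0)$ on a time interval $[0,T]$ with $T$ depending only on $E_M(0)$.

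For stage (3), I would run an iteration: given the $n$-th iterate, solve the linear wave problems for $R^{(n+1)}$ and $\Theta^{(n+1)}$ with coefficients frozen at the $n$-th iterate and with the linearized boundary and junction conditions, then update $e^{(n+1)}$, $\Gamma^{(n+1)}$ by the transport equations (linear once $\Theta$ is frozen), and set $\sigma^2$ algebraically. The a priori estimates of stage (2), applied uniformly along the iteration, give uniform bounds; a contraction estimate at one lower level of regularity gives convergence; and the limit solves the derived system and satisfies \eqref{eq:regularity1}. Uniqueness follows from the same low-regularity difference estimate. For stage (4), let $\mathcal{C}$ denote the collection of constraint quantities: the symmetries and first Bianchi identity of $R_{IJKL}$, the differential Bianchi identity, the Einstein relations \eqref{eq:Einstein2} themselves (which must be propagated since the derived curvature equations were obtained by taking a divergence of Bianchi), the div and curl relations for $\Theta$, the metricity/torsion-freeness relations satisfied by $e_I$ and $\Gamma^K_{IJ}$, and $\sigma^2=1$ on $\partial\Omega$. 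Using the evolution equations one shows $\mathcal{C}$ satisfies a closed homogeneous linear system of transport and wave type with homogeneous boundary conditions, so $\mathcal{C}|_{t=0}=0$ forces $\mathcal{C}\equiv 0$; and $\mathcal{C}|_{t=0}=0$ is exactly the content of \eqref{def vanishing fluid int}, \eqref{def vanishing geom int} together with \eqref{eq:compatibility1}. With $\mathcal{C}\equiv 0$, the pair $(u,g)$ built from $(\Theta,e)$ in the statement satisfies \eqref{eq:Einstein1}, and, via the equivalence recorded in Section~\ref{subsec:frameeqs}, also \eqref{eq:relEuler1}--\eqref{eq:relEuler2}.

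The \textbf{main obstacle} is stage (2), and specifically the simultaneous closure of the coupled interior--boundary estimates: the fluid estimate, where the free boundary and the Taylor sign condition \eqref{eq:Taylor1} interact in the boundary term, must be closed in the same Sobolev hierarchy as the curvature estimate, whose top-order regularity is only $L^2$ and which is discontinuous across $\partial\Omega$, and as the frame/connection estimate, which is obtained by integration along the fluid flow and hence is prone to losing spatial regularity. Arranging the derivative counts in \eqref{eq:regularity1} and the order of the estimates so that every nonlinear term is controlled and the bootstrap closes with a consistent loss-free hierarchy is the technical crux of the argument.
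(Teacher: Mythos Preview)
Your four-stage outline matches the paper's overall strategy, but two of the core analytic choices in stage~(1) diverge from what the paper actually does, and in each case the paper's choice is what makes the estimates close.

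\textbf{Curvature: first-order Bianchi, not second-order wave.} You propose to differentiate the Bianchi identity once more to obtain $\Box_g R_{IJKL}=\mathcal{N}$ in $\Omega$ and in $\Omega^c$ separately, with ``junction conditions'' across $\partial\Omega$. The paper does not do this. Since $R$ itself jumps across $\partial\Omega$ (because $T$ does), a second-order equation for $R$ would carry singular boundary terms, and it is not clear what well-posed transmission conditions you would impose. Instead, the paper singles out the two-forms $F^{AB}_{IJ}=R(e_I,e_J,X_A,X_B)$ with $X_A,X_B$ \emph{tangential} to $\partial\Omega$; for these the contracted and differential Bianchi identities \eqref{eq:cBian1}--\eqref{eq:dBian1} make sense \emph{weakly on all of} $\Sigma=\Omega\cup\Omega^c$ (no junction conditions needed), and give a first-order symmetric hyperbolic Maxwell-type system \eqref{eq:W2} for $W=(E,H)$. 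The remaining curvature components are recovered algebraically from \eqref{curvature recover 1}--\eqref{curvature recover 3} and \eqref{eq:Einstein2}. This choice is what allows the curvature energy and the div--curl elliptic estimates (Lemmas~\ref{prop: H1 div curl sys 1}--\ref{prop: H1 div curl sys 2}) to be run on each side without an explicit transmission problem.

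\textbf{The role of $\sigma^2$ and $D_V\sigma^2$.} You say ``$\sigma^2$ is recovered algebraically from $\sigma^2=-\Theta_I\Theta^I$''. In the paper this relation is \emph{not} used in the estimates or in the iteration; it is one of the vanishing quantities $S$ in \eqref{def vanishing fluid} that has to be propagated afterwards. The reason is structural: the boundary equation for $\Theta^I$ is \eqref{eq:Thetabdry1},
\[
(\partial_t^2+\gamma D_n)\Theta^I=-\tfrac{1}{2\sigma^2}D_I D_V\sigma^2+\ldots,
\]
so the source contains a full derivative of $D_V\sigma^2$. To control this on $\partial\Omega$ at top order one needs a separate Dirichlet wave equation \eqref{eq:Lambda1} for $\partial_t\sigma^2$ (and the equation \eqref{eq:Sigma1} for $\sigma^2$, used elliptically), estimated via the multiplier of Lemma~\ref{lem:sigmaenergy}. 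Treating $\sigma^2$ algebraically would force two derivatives of $\Theta$ into the boundary source and the hierarchy would not close. Relatedly, the elliptic upgrade for $\Theta$ is not a div--curl estimate from $V\in\calT\partial\Omega$; it is the Neumann estimate \eqref{eq:ellipticlemNeu1} with $D_n\Theta$ read off from the boundary equation, which is why one spatial derivative costs two time derivatives in \eqref{eq:regularity1}.

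With these two replacements --- first-order Bianchi for tangential curvature components in place of $\Box_g R$ with junctions, and independent Dirichlet wave equations for $\sigma^2$, $\partial_t\sigma^2$ in place of the algebraic relation --- your stages~(2)--(4) line up with Sections~\ref{sec:apriori}--\ref{sec:iteration} of the paper.
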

 
 Following Choquet-Bruhat's existence theorem in \cite{C-Bpaper1} for Einstein's equations in vacuum, well-posedness results have been established for the Einstein equations coupled to many different matter fields. Indeed, much progress has been made beyond the local theory in the past few decades.  Despite all the progress in this direction, in the presence of isolated bodies, especially with free boundaries, our understanding of even the local theory is very limited. A well-posedness theory for isolated bodies is of central importance as it is naturally the first step in any further analysis of the motion and interaction of gravitating bodies. With the exception of \cite{CB-Fri1,Ehlers,Rendall92}, which consider special solutions under symmetries or where the motion of the boundary is not tracked, the only work on well-posedness for isolated bodies in general relativity that we are aware of is \cite{AOS1}, which considers the very different case of solid elastic bodies. The free boundary problem for fluid bodies is, however, very different and includes many new analytical challenges even in the Newtonian setting. To the best of our knowledge the current work is the first to prove well-posedness for a free-boundary fluid equation in the setting of general relativity.  The type of fluid model considered in this work, where the energy density has a jump across the fluid boundary, is sometimes referred to as a liquid model. Well-posedness for such free boundary fluid models is already subtle in the Newtonian case, and the first satisfactory local theory in Sobolev spaces was developed only in the mid 1990s in \cite{Wu97,Wu99} for the case of water waves.\footnote{The interested reader is referred to these works and for instance \cite{MSW1, Wu16} for more on the history of the well-posedness theory in the Newtonian case.} More recently, well-posedness was established for relativistic liquids with free boundary in \cite{Oliynyk4,Oliynyk1,Oliynyk3,MSW1}, but in the case of a fixed background, that is, without coupling the relativistic fluid equations \eqref{eq:relEuler1}-\eqref{eq:relEuler2} to Einstein's equations. See also \cite{Oliynyk2} for the case of two spatial dimensions, \cite{Ginsberg1} for a priori estimates under smallness assumptions, \cite{Trakhinin} for an existence results using Nash-Moser iteration, and \cite{GinLin1}. For related developments in the gaseous case (where the energy density vanishes at the fluid boundary) on a fixed background see \cite{HSS1,JLM1, DIT1}.
 
In view of the remarks in the previous paragraph, this work should be considered as a natural continuation of our earlier work \cite{MSW1}. Indeed, the idea is to use the general setup developed in \cite{MSW1} to treat the relativistic fluid. However, a main difficulty is that in the presence of Einstein's equations one has to guarantee that geometric quantities do not break the regularity structure necessary to close the estimates for the fluid quantities. As discussed earlier in the introduction, this is achieved by working in a frame that is parallel transported along the fluid velocity. This frame formulation plays a crucial role in our analysis, and is discussed more carefully in Section~\ref{subsec:frameeqs} below. The idea of using frames to derive estimates in the study of Einstein's equations has a long history, and has proved especially useful in long time analysis of the dynamics. See for instance \cite{CK1}. Some of the choices we have made are inspired by the work \cite{Frie1} which studies various formulations of the Bianchi equations as a first order hyperbolic system. However, our final formulation is different from the ones considered in \cite{Frie1}, especially because the free boundary fluid is analyzed differently based on \cite{MSW1}.

In the remainder of the introduction we will describe the derived equations to which we alluded earlier, and then discuss the main ideas of the proof of Theorem~\ref{thm:main1}, in particular the a priori estimates. This is the content of Section~\ref{subsec:frameeqs}. The initial data are discussed in Section~\ref{subseq:data}. We will use lowercase roman indices $i,j,k,\dots\in{1,2,3}$ for the spatial coordinates, capital letters $I,J,K,\dots\in\{0,\dots,3\}$ for all frame indices, and $\tilI, \tilJ,\tilK,\dots\in\{1,2,3\}$ for the spatial frame components. We will use $x^0$ and $t$, and similarly $\partial_0$ and $\partial_t$, interchangeably. When the exact structure is not important, we will use $e$, $\Theta$, $\Gamma$, and $R$ to denote arbitrary frame components of the corresponding quantities. Scalar differentiation with respect to $e_I$ will be denoted by $D_I:=e_I^\mu \partial_\mu$, and when the exact structure is not important we simply write $D$ or $D^k$ for higher order derivatives.
\subsection{Equations in the frame and the outline of the proof}\label{subsec:frameeqs}
We start by deriving the transport equations  \eqref{eq:frame1} and \eqref{eq:connection1} for $\Gamma_{IJ}^K$ and the components $e_I^\mu$ of the frame elements in the Lagrangian coordinates $\{x^\mu\}$. These transport equations will be coupled to the main evolution equations which are the Bianchi equations for the curvature and the wave-boundary system for the fluid components $\Theta^I$.   Since $\hatV$ is timelike (recall that $\hatV=\frac{1}{\|V\|}V$), for $e_0$ to be unit, future directed, and orthogonal to $e_\tilI$, we must have 
\begin{align*}
\begin{split}
e_0=\frac{1}{\hatTheta^0}(\hatV-\hatTheta^{\tilI}e_\tilI).
\end{split}
\end{align*}
In coordinates, the requirement that $\hatV=\partial_t$ gives
\begin{align}\label{eq:e0mu1}
\begin{split}
e_0^\mu=\frac{1}{\hatTheta^0}(\delta_0^\mu-\hatTheta^{\tilI}e_{\tilI}^\mu),
\end{split}
\end{align}
so the components of $e_0$ are determined algebraically in terms of $\hatTheta$ and  the components of $e_\tilI$. To derive a transport equation for the components of $e_\tilI$ we use the fact that, with $\calL$ denoting the Lie derivative,
\begin{align*}
\begin{split}
\calL_{e_{J}}e_I=\nabla_{e_J}e_{I}-\nabla_{e_I}e_J=(\Gamma_{JI}^K-\Gamma_{IJ}^K)e_K.
\end{split}
\end{align*}
Choosing $J=0$ gives
\begin{align*}
\begin{split}
(\Gamma_{0\tilI}^K-\Gamma_{\tilI0}^K)e_K^\mu& = \frac{1}{\hatTheta^0}\partial_te_\tilI^\mu-\frac{\hatTheta^\tilJ}{\hatTheta^0}D_{\tilJ}(e_\tilI^\mu)-D_{\tilI}(\frac{1}{\hatTheta^0}(\delta_0^\mu-\hatTheta^\tilJ e_\tilJ^\mu))\\
&=\frac{1}{\hatTheta^0}\partial_te_\tilI^\mu-\frac{\hatTheta^\tilJ}{\hatTheta^0}\calL_{e_{\tilJ}}e_{\tilI}^\mu+\frac{D_\tilI\hatTheta^0}{\hatTheta^0}e_0^\mu+\frac{D_\tilI\hatTheta^\tilJ}{\hatTheta^0}e_{\tilJ}^\mu.
\end{split}
\end{align*}
Rearranging, and using $\hatTheta^J\Gamma_{J\tilI}^Ke_K=\nabla_{\hatV}e_\tilI=0$ we get
\begin{align}\label{eq:etransport1}
\begin{split}
\partial_te_\tilI^\mu=-e_J^\mu D_\tilI\hatTheta^J-\hatTheta^J\Gamma_{\tilI J}^Ke_K^\mu.
\end{split}
\end{align}
Equations \eqref{eq:e0mu1} and \eqref{eq:etransport1} describe how to recover $e_I^\mu$ from $\hatTheta^I$ and $\Gamma_{IJ}^K$. 
\begin{remark}\label{rem:transport e0mu}
Equations \eqref{eq:e0mu1},\eqref{eq:etransport1} and \eqref{eq:Gamma01} imply the following transport equation for $e_{0}^{\mu}$:
\begin{align*}
\partial_{t}e_{0}^{\mu}=&-D_{0}\hatTheta^{J}\cdot e_{J}^{\mu}-\hatTheta^{J}\Gamma_{0J}^{K}e_{K}^{\mu}.
\end{align*}
Indeed, in view of \eqref{eq:e0mu1} and \eqref{eq:etransport1}, 
\begin{align*}
\partial_{t}e_{0}^{\mu}=&-\frac{\partial_{t}\hatTheta^{0}}{(\hatTheta^{0})^{2}}(\delta_{0}^{\mu}-\hatTheta^{\tilI}e_{\tilI}^{\mu})-\frac{1}{\hatTheta^{0}}\partial_{t}\hatTheta^{\tilI}e_{\tilI}^{\mu}-\frac{1}{\hatTheta^{0}}\hatTheta^{\tilI}\partial_{t}e_{\tilI}^{\mu}\\
=&-\frac{\partial_{t}\hatTheta^{J}}{\hatTheta^{0}}e_{J}^{\mu}-\frac{\hatTheta^{\tilI}}{\hatTheta^{0}}\left(-e_{J}^{\mu}D_{\tilI}\hatTheta^{J}-\hatTheta^{J}\Gamma_{\tilI J}^{K}e_{K}^{\mu}\right).
\end{align*}
The desired result follows from  \eqref{eq:Gamma01} and the fact $\partial_{t}=\hatTheta^{0}D_{0}+\hatTheta^{\tilI}D_{\tilI}$ (which is a restatement of \eqref{eq:e0mu1}) .
\end{remark}
Next we derive a transport and algebraic equations for $\Gamma_{IJ}^K$ in terms of $e_I^\mu$, $\Theta^I$ and the curvature. For the algebraic equation observe that since $\nabla_\hatV e_J=0$,
\begin{align*}
\begin{split}
\Gamma_{0J}^Ke_K=\nabla_{e_0}e_J=\frac{1}{\hatTheta^0}\nabla_{\hatV-\hatTheta^\tilI e_\tilI}e_J=-\frac{\hatTheta^\tilI}{\hatTheta^0}\Gamma_{\tilI J}^K e_K.
\end{split}
\end{align*}
This shows that
\begin{align}\label{eq:Gamma01}
\begin{split}
\Gamma_{0J}^K=-\frac{\hatTheta^\tilI}{\hatTheta^0}\Gamma_{\tilI J}^K,
\end{split}
\end{align}
so that $\Gamma_{0J}^K$ is determined algebraically in terms of $\Theta^I$ and $\Gamma^I_{\tilI J}$. For the transport equations for the remaining components, first note that the condition $\nabla_\hatV e_I=0$ implies $\hatTheta^J\Gamma_{JI}^L=0$, so
\begin{align*}
\begin{split}
\hatTheta^I(D_I\Gamma_{\tilI J}^K-D_\tilI \Gamma_{IJ}^K)=\partial_t\Gamma_{\tilI J}^K+\Gamma_{I J}^KD_\tilI\hatTheta^I.
\end{split}
\end{align*}
On the other hand, by the Ricci identity 
\begin{align*}
\begin{split}
R^I_{\phantom{I}JKL}=D_K\Gamma_{LJ}^I-D_L\Gamma_{KJ}^I-\Gamma_{MJ}^I(\Gamma_{KL}^M-\Gamma_{LK}^M)+\Gamma_{KM}^I\Gamma_{LJ}^M-\Gamma_{LM}^I\Gamma_{KJ}^M.
\end{split}
\end{align*}
Putting these identities together, and again using $\hatTheta^J\Gamma_{JI}^L=0$, we arrive at
\begin{align}\label{eq:Gammatransport1}
\begin{split}
\partial_t\Gamma_{\tilI J}^K&=\hatTheta^I\big(R^K_{\phantom{K}JI\tilI}+\Gamma_{MJ}^K(\Gamma_{I\tilI}^M-\Gamma_{\tilI I}^M)+\Gamma^K_{\tilI M}\Gamma^M_{IJ}-\Gamma^K_{IM}\Gamma^M_{\tilI J}\big)-\Gamma_{I J}^KD_\tilI\hatTheta^I\\
&=\hatTheta^{I}\left(R^{K}{}_{JI\tilI}-\Gamma_{MJ}^{K}\Gamma_{\tilI I}^{M}\right)-\Gamma_{IJ}^{K}D_{\tilI}\hatTheta^{I}.
\end{split}
\end{align}
Equations \eqref{eq:e0mu1}, \eqref{eq:etransport1}, \eqref{eq:Gamma01}, and \eqref{eq:Gammatransport1} are precisely equations \eqref{eq:frame1} and \eqref{eq:connection1} mentioned in the previous subsection. They constitute our working equations for the connection and frame coefficients.
\begin{remark}\label{rem:transport Gamma0}  
Equations \eqref{eq:Gamma01} and \eqref{eq:Gammatransport1} imply the following transport equation for $\Gamma_{0J}^{K}$:
\begin{align*}
\partial_{t}\Gamma_{0J}^{K}=&\hatTheta^{I}R^{K}{}_{JI0}-\hatTheta^{I}\Gamma^{K}_{MJ}\Gamma_{0I}^{M}-D_{0}\hat\Theta^{I}\Gamma_{IJ}^{K}.
\end{align*}
Indeed, in view of \eqref{eq:Gamma01},
\begin{align*}
\partial_{t}\Gamma_{0J}^{K}=&-\frac{\partial_{t}\hatTheta^{\tilI}}{\hatTheta^{0}}\Gamma_{\tilI J}^{K}+\frac{\partial_{t}\hatTheta^{0}}{(\hatTheta^{0})^{2}}\hatTheta^{\tilI}\Gamma_{\tilI J}^{K}-\frac{\hatTheta^{\tilI}}{\hatTheta^{0}}\partial_{t}\Gamma_{\tilI J}^{K}=:A+B+C.
\end{align*}
By \eqref{eq:Gammatransport1},
\begin{align*}
C=&-\frac{\hatTheta^{\tilI}}{\hatTheta^{0}}\cdot\hatTheta^{I}\left(R^{K}{}_{JI\tilI}+\Gamma^{K}_{MJ}(\Gamma_{I\tilI}^{M}-\Gamma^{M}_{\tilI I})+\Gamma^{K}_{\tilI M}\Gamma^{M}_{IJ}-\Gamma_{IM}^{K}\Gamma^{M}_{\tilI J}\right)+\Gamma_{IJ}^{K}D_{\tilI}\hatTheta^{I}\cdot \frac{\hatTheta^{\tilI}}{\hatTheta^{0}}.
\end{align*}
For the first term on the right-hand side above, we have, using the symmetry properties of the curvature tensor,
\begin{align*}
-\frac{\hatTheta^{\tilI}}{\hatTheta^{0}}\cdot \hatTheta^{I}R^{K}{}_{JI\tilI}=\hatTheta^{I}R^{K}{}_{JI0}.
\end{align*}
For the second term, we have
\begin{align*}
-\frac{\hatTheta^{\tilI}}{\hatTheta^{0}}\cdot \hatTheta^{I}\Gamma_{MJ}^{K}\Gamma^{M}_{I\tilI}=	-\frac{\hatTheta^{\tilI}}{\hatTheta^{0}}\cdot \hatTheta^{\tilJ}\Gamma_{MJ}^{K}\Gamma^{M}_{\tilJ\tilI}-\hatTheta^{\tilI}\Gamma_{MJ}^{K}\Gamma^{M}_{0\tilI}=\hatTheta^{\tilI}\Gamma_{MJ}^{K}\Gamma^{M}_{0\tilI}-\hatTheta^{\tilI}\Gamma_{MJ}^{K}\Gamma^{M}_{0\tilI}=0.
\end{align*}
Therefore $C$ can be rewritten as
\begin{align*}
C&=\hatTheta^{I}R^{K}{}_{JI0}+\hatTheta^{I}\left(-\Gamma^{K}_{MJ}\Gamma_{0I}^{M}+\Gamma_{0M}^{K}\Gamma_{IJ}^{M}-\Gamma_{IM}^{K}\Gamma^{M}_{0J}\right)+\Gamma_{IJ}^{K}D_{\tilI}\hatTheta^{I}\cdot \frac{\hatTheta^{\tilI}}{\hatTheta^{0}}\\
&=\hatTheta^{I}R^{K}{}_{JI0}-\hatTheta^{I}\Gamma_{MJ}^{K}\Gamma_{0I}^{M}+\Gamma_{IJ}^{K}D_{\tilI}\hatTheta^{I}\cdot\frac{\hatTheta^{\tilI}}{\hatTheta^{0}}.
\end{align*}
For $A$ and $V$, using $\partial_{t}=\hatTheta^{0}D_{0}+\hatTheta^{\tilI}D_{\tilI}$ and \eqref{eq:Gamma01},
\begin{align*}
A=&-D_{0}\hatTheta^{\tilI}\Gamma^{K}_{\tilI J}-\frac{\hatTheta^{\tilJ}D_{\tilJ}\hatTheta^{\tilI}}{\hatTheta^{0}}\Gamma_{\tilI J}^{K},\\
B=&\frac{D_{0}\hatTheta^{0}}{\hatTheta^{0}}\hatTheta^{\tilI}\Gamma_{\tilI J}^{K}+\frac{\hatTheta^{\tilJ}D_{\tilJ}\hatTheta^{0}}{(\hatTheta^{0})^{2}}\hatTheta^{\tilI}\Gamma_{\tilI J}^{K}
=-D_{0}\hatTheta^{0}\Gamma_{0J}^{K}-\frac{\hatTheta^{\tilJ}D_{\tilJ}\hatTheta^{0}}{\hatTheta^{0}}\Gamma_{0J}^{K}.
\end{align*}
On the other hand, note that
\begin{align*}
\Gamma_{IJ}^{K}D_{\tilI}\hatTheta^{I}\cdot \frac{\hatTheta^{\tilI}}{\hatTheta^{0}}-\frac{\hatTheta^{\tilJ}D_{\tilJ}\hatTheta^{\tilI}}{\hatTheta^{0}}\Gamma_{\tilI J}^{K}-\frac{\hatTheta^{\tilJ}D_{\tilJ}\hatTheta^{0}}{\hatTheta^{0}}\Gamma_{0J}^{K}=0.
\end{align*}
It follows that
\begin{align*}
A+B+C=\hatTheta^{I}R^{K}{}_{JI0}-\hatTheta^{I}\Gamma^{K}_{MJ}\Gamma_{0I}^{M}-D_{0}\hat\Theta^{I}\Gamma_{IJ}^{K},
\end{align*}
as desired.
\end{remark}
We also note that the metric is determined by the requirement that $\{e_I\}$ is an orthonormal frame. That is, with $m_{IJ}$ and $(m^{-1})^{IJ}$ denoting the components of the Minkowski metric \emph{in rectangular coordinates}, we have
\begin{align}\label{eq:g1}
\begin{split}
m_{IJ}= g_{\mu\nu}e_{I}^\mu e_J^\nu,\qquad (g^{-1})^{\mu\nu}=(m^{-1})^{IJ}e_I^\mu e_J^\nu.
\end{split}
\end{align}

Next we turn to the fluid equations. To motivate the calculations, we briefly recall some of the relevant ideas in a Newtonian analogue of our problem, namely the water wave equations, and the relativistic problem on the fixed Minkowski background. For the Newtonian problem, let $\tilV$, $\tilp$, and $\tilOmega_t$ denote the fluid velocity, pressure, and fluid domain respectively, and let $\tiln$ denote the unit exterior normal of $\partial\Omega_t$. In \cite{Wu97,Wu99} Wu applied a material derivative $\tilD_t = \partial_t+\tilV\cdot\nabla$ to the Euler equations, to derive the following quasilinear equation for the components of $\tilV$ 
\begin{align}\label{eq:water1}
\begin{cases}
(D_t^2+\tila \nabla_\tiln)\tilV = -\nabla  D_t\tilp\qquad &\mathrm{on~}\partial\Omega_t\\
\Delta \tilV=0\qquad &\mathrm{in~}\tilOmega_t
\end{cases}.
\end{align}
Here $\tila:=-\frac{\partial \tilp}{\partial\tiln}$ is positive on the boundary\footnote{This is the Taylor sign condition which is proved to hold in \cite{Wu97,Wu99}.}, and $\nabla_\tiln$ is the Dirichlet-Neumann map. See also \cite{ZZ}. The second equation in \eqref{eq:water1} holds by the assumptions that the fluid is irrotational and incompressible, and the positivity of the Dirichlet-Neumann map is used to prove energy estimates for this equation. To show that \eqref{eq:water1} is quasilinear Wu expressed $\nabla \tilD_t\tilp$ in terms of $\tilV$ using boundary integrals. That this is possible is ultimately related to the fact that $\tilD_t\tilp$ and $\tilp$ satisfy elliptic equations of the form
\begin{align}\label{eq:pressure1}
\begin{cases}
\Delta\tilp= \tilF(\partial\tilV)\\
\Delta \tilD_t\tilp= \tilG(\partial \tilV,\partial^2 \tilp)
\end{cases}\quad\mathrm{in~}\Omega_t,\qquad \tilp=\tilD_t\tilp=0\qquad\mathrm{on~}\partial\Omega_t.
\end{align} 
We refer the reader to the introduction of \cite{MSW1} for a more detailed discussion. Motivated by these earlier works, in \cite{MSW1} we applied a material derivative $D_V=V^\mu\partial_\mu$ to the relativistic Euler equations on the fixed Minkowski background to obtain
\begin{align}\label{eq:Minkowski1}
\begin{cases}
(D_V^2+\frac{a}{2}D_n) V^\nu = -\frac{1}{2}\nabla^\nu D_V\sigma^2\qquad &\mathrm{on~}\partial\Omega\\
\Box (V^\nu)=0\qquad &\mathrm{in~}\Omega
\end{cases}.
\end{align}
Here $\Omega$ is the fluid domain, $n$ the (spacetime) exterior unit normal, $a=|\nabla\sigma^2|$, and the rectangular components $V^\nu$ of the fluid velocity are interpreted as scalar functions. Similarly, $\sigma^2$ and $D_V\sigma^2$ were shown to satisfy equations of the form
\begin{align}\label{eq:Minkowski2}
\begin{cases}
\Box \sigma^2= F(\partial V)\\
\Box D_V\sigma^2= G(\partial V, \partial^2\sigma^2)
\end{cases}\quad \mathrm{in~}\Omega,\qquad \sigma^2=1,~D_V\sigma^2=0\quad \mathrm{on~}\partial\Omega.
\end{align}
Despite the formal similarity between \eqref{eq:Minkowski1}, \eqref{eq:Minkowski2} and \eqref{eq:water1}, \eqref{eq:pressure1}, these equations have the important difference that while $\tilV$, $\tilp$, and $\tilD_t\tilp$ satisfy elliptic equations in the interior of the fluid domain, $V^\nu$, $\sigma^2$, and $D_V\sigma^2$ satisfy a hyperbolic equation, and the normal derivative appearing in \eqref{eq:Minkowski1} is the hyperbolic Dirichlet-Neumann map. Nevertheless, in \cite{MSW1} we were able to use \eqref{eq:Minkowski1} and \eqref{eq:Minkowski2} to derive a priori estimates and set up an iteration. 

Motivated by this discussion we now demonstrate how to derive analogous equations for $\Theta^I$ and $\sigma^2$. The main advantage of considering the frame components $\Theta^I$ of $V$ rather than arbitrary coordinate components $V^\nu$ is that, as we will see, only the connection coefficients $\Gamma$ and not their derivatives appear in the boundary equation for the fluid velocity. Our starting point is the first equation in \eqref{eq:relbar1} which we rewrite as
\begin{align*}
\begin{split}
\nabla_VV+\frac{1}{2}\nabla\sigma^2=0.
\end{split}
\end{align*}
Applying the covariant derivative $\nabla_V$ to this equation yields
\begin{align*}
\begin{split}
\nabla_V^2V+\frac{a}{2}\nabla_nV+\frac{1}{2}\nabla D_V\sigma^2=0.
\end{split}
\end{align*}
Here
\begin{align*}
\begin{split}
a:= \sqrt{\nabla_\mu\sigma^2\nabla^\mu \sigma^2},
\end{split}
\end{align*}
which in terms of the frame becomes
\begin{align}\label{eq:a1}
\begin{split}
a=\sqrt{\sum_I\epsilon_I(D_I\sigma^2)^2}.
\end{split}
\end{align}
By definition $n$ is (note that since $\sigma^2$ is constant on the boundary $\nabla\sigma^2$ is normal to $\partial\Omega$)
\begin{align*}
\begin{split}
n=\frac{1}{a}\nabla\sigma^2.
\end{split}
\end{align*}
 Taking the inner product with $e_I$ we get 
\begin{align*}
\begin{split}
0&=\epsilon_ID_V^2\Theta^I+\frac{a\epsilon_I}{2}D_n\Theta^I+\frac{\epsilon_{I}\epsilon_K}{2}\Gamma_{KJ}^I\Theta^JD_K\sigma^2+\frac{\epsilon_I}{2}D_ID_V\sigma^2.
\end{split}
\end{align*}
Rearranging gives
\begin{align*}
\begin{split}
(D_V^2+\frac{a}{2}D_n)\Theta^I=-\frac{\epsilon_K}{2}\Gamma_{KI}^J\Theta^JD_K\sigma^2-\frac{1}{2}D_ID_V\sigma^2.
\end{split}
\end{align*}
Using \eqref{eq:e0mu1} to express $D_V$ in terms of $\partial_t$, and introducing the notation
\begin{align}\label{eq:gamma1}
\begin{split}
\gamma:=\frac{a}{2\sigma^2}=\frac{\sqrt{\epsilon_I(D_I\sigma^2)^2}}{2\sigma^2},
\end{split}
\end{align}
we arrive at
\begin{align}\label{eq:Thetabdry1}
\begin{split}
(\partial_t^2+\gamma D_n)\Theta^I=-\frac{\epsilon_K}{2\sigma^2}\Gamma_{KI}^J\Theta^JD_K\sigma^2-\frac{1}{2\sigma^2}D_ID_V\sigma^2-\frac{1}{2\sigma^2}(\partial_t\sigma^2)\partial_t\Theta^I.
\end{split}
\end{align}
Equation \eqref{eq:Thetabdry1} is the analogue of the first equation in \eqref{eq:Minkowski1} in our setting. We next turn to the interior wave equation for $\Theta^I$, that is, the analogue of the second equation in \eqref{eq:Minkowski1}. Applying $\nabla_\nu$ to the vanishing divergence equation $\nabla_\mu V^\mu=0$, see \eqref{eq:relEuler3}, commuting $\nabla_\nu$ and $\nabla_\mu$, and using $dV=0$, gives
\begin{align}\label{eq:boxVintrotemp1}
\begin{split}
0=\nabla_\mu \nabla^\mu V_\nu-R_{\nu\lambda}V^\lambda.
\end{split}
\end{align}
Now we use \eqref{eq:Einstein1} to replace $R_{\nu\lambda}$ on the right. Taking trace of \eqref{eq:Einstein1} we get
\begin{align*}
\begin{split}
S= V_\mu V^\mu +2\quad \mathrm{in~}\Omega,\qquad S=0\quad \mathrm{in~}\Omega^c,
\end{split}
\end{align*}
and therefore in view of \eqref{eq:Einstein1} and \eqref{eq:hpemtensor1}
\begin{align*}R_{\nu\lambda}=
\begin{cases}
 V_\nu V_\lambda+\frac{1}{2}g_{\nu\lambda} \quad&\mathrm{in~}\Omega\\
 0\quad&\mathrm{in~}\Omega^c
\end{cases}.
\end{align*}
Plugging back into \eqref{eq:boxVintrotemp1} gives
\begin{align}\label{eq:boxVintrotemp2}
\nabla_\mu \nabla^\mu V^\nu=
\begin{cases}
(\frac{1}{2}-\sigma^2)V^\nu\qquad&\mathrm{in~}\Omega\\
0\qquad&\mathrm{in~}\Omega^c
\end{cases}.
\end{align}
Taking the inner product of \eqref{eq:boxVintrotemp2} with $e_I$ and using $\Box$ to denote the wave operator of $g$ we get
\begin{align*}
\begin{split}
\epsilon_I\big(\frac{1}{2}-\sigma^2\big)\Theta^I&=e_I^\mu \nabla_\nu \nabla^\nu V_\mu = \nabla_\nu (e_I^\mu \nabla^\nu V_\mu) -(\nabla^\nu V_\mu)(\nabla_\nu e_I^\mu)\\
&=\epsilon_I\Box \Theta^I-2(\nabla^\nu V_\mu)(\nabla_\nu e_I^\mu)-V_\mu \nabla_\nu\nabla^\nu e_I^\mu.
\end{split}
\end{align*}
The last two terms can be written in terms of the frame and connection coefficients as
\begin{align*}
\begin{split}
&(\nabla^\nu V_\mu)(\nabla_\nu e_I^\mu)=\epsilon_J\epsilon_Kg(\nabla_{e_K}(\Theta^M e_M),e_J)g(\nabla_{e_K}e_I,e_J)=\epsilon_J\epsilon_K\Gamma_{KI}^JD_K\Theta^J+\epsilon_J\epsilon_K\Gamma_{KL}^J\Gamma_{KI}^J\Theta^L,\\
&V_\mu \nabla_\nu\nabla^\nu e_I^\mu=\epsilon_J\epsilon_K\Theta^Jg(e_J,\nabla_{e_K}\nabla_{e_K}e_I)=\epsilon_K\Theta^J D_K\Gamma_{KI}^J-\epsilon_{L}\epsilon_K\Theta^J\Gamma_{KI}^L\Gamma_{KJ}^L.
\end{split}
\end{align*}
Combining these identities we get
\begin{align}\label{eq:BoxTheta1}
\begin{split}
\Box \Theta^I=(\frac{1}{2}-\sigma^2)\Theta^I+\epsilon_I\epsilon_J\epsilon_K\big(2\Gamma_{KI}^JD_K\Theta^J+2\Gamma_{KL}^J\Gamma_{KI}^J\Theta^L+\epsilon_J\Theta^J D_K\Gamma_{KI}^J-\epsilon_J\epsilon_{L}\Theta^J\Gamma_{KI}^L\Gamma_{KJ}^L).
\end{split}
\end{align}

To complete the set of fluid equations we need to derive wave equations for $\sigma^2$ and $D_V\sigma^2$ with Dirichlet boundary conditions (where as in \cite{MSW1} for $\sigma^2$ this will be treated as an elliptic equation with $\partial D_V\sigma^2$ as the source term). A direct computation using $\nabla_\mu V^\mu=0$ and $\nabla_\mu V_\nu= \nabla_\nu V_\mu$ shows that
\begin{align*}
\begin{split}
\Box \sigma^2 = \sigma^2-2\sigma^4-2(\nabla_\mu V_\nu)(\nabla^\mu V^\nu),\qquad \sigma^2\vert_{\partial\Omega}\equiv1.
\end{split}
\end{align*}
In the frame, this takes the form
\begin{align}\label{eq:Sigma1}
\begin{split}
\Box \sigma^2 = \sigma^2-2\sigma^4-2\epsilon_I\epsilon_J(D_I\Theta^J+\Gamma_{IK}^J\Theta^K)^2,\qquad \sigma^2\vert_{\partial\Omega}\equiv1.
\end{split}
\end{align}
For $D_V\sigma^2$, commuting the original equation for $\sigma^2$ with $D_V$ and using the fact that $\nabla_VV=-\frac{1}{2}\nabla\sigma^2$ we get
\begin{align*}
\begin{split}
\Box D_V\sigma^2=2D_V\sigma^2-6\sigma^2 D_V\sigma^2+6(\nabla^\mu V^\nu)(\nabla_\mu \nabla_\nu\sigma^2)+4(\nabla^\mu V^\nu)(\nabla_\mu V^\lambda)(\nabla_\lambda V_\nu)-4 R_{\lambda\mu\nu\kappa}(\nabla^\mu V^\nu)V^\kappa V^\lambda,
\end{split}
\end{align*}
with $D_V\sigma^2\vert_{\partial\Omega}\equiv0$. 
Recalling that $\partial_t\sigma^2=\frac{1}{\sqrt{\sigma^2}}D_V\sigma^2$, in terms of the frame this becomes
\begin{align}\label{eq:Lambda1}
\begin{split}
\Box \partial_t\sigma^2&=\frac{2}{\sqrt{\sigma^2}}D_{V}\sigma^2-\frac{6}{\sqrt{\sigma^2}}\sigma^2 D_V\sigma^2+\frac{6}{\sqrt{\sigma^2}}\epsilon_I\epsilon_J(D_I\Theta^J+\Gamma_{IK}^J\Theta^K)(D_ID_J\sigma^2-\Gamma_{IJ}^K D_K\sigma^2)\\
&\quad+\frac{4}{\sqrt{\sigma^2}}\epsilon_I\epsilon_J(D_I\Theta^J+\Gamma_{IM}^{J}\Theta^M)(D_I\Theta^K+\Gamma_{IN}^K\Theta^N)(D_K\Theta^J+\Gamma_{KP}^J\Theta^P)\\
&\quad-\frac{4}{\sqrt{\sigma^2}} \epsilon_I\epsilon_K\epsilon_LR_{L I J K}(D_I\Theta^J+\Gamma_{IM}^J\Theta^M)\Theta^{K}\Theta^{L}+\frac{\epsilon_I}{\sigma^2} D_I\sigma^2 D_I\partial_t\sigma^2\\
&\quad+\frac{\epsilon_I\partial_t\sigma^2}{4\sigma^4}D_I\sigma^2 D_I\sigma^2+\frac{\partial_t\sigma^2}{2}-\sigma^2\partial_t\sigma^2-\frac{\partial_t\sigma^2}{\sigma^2}\epsilon_I\epsilon_J(D_I\Theta^J+\Gamma_{IK}^J\Theta^K)^2).
\end{split}
\end{align}
Equations \eqref{eq:Thetabdry1}, \eqref{eq:BoxTheta1}, \eqref{eq:Sigma1}, \eqref{eq:Lambda1} are the fluid equations in the frame.

It remains to derive the equations satisfied by the curvature components. Our goal is to use the differential Bianchi equations, viewed as a first order hyperbolic system. However, to do this we need to take the discontinuity at the boundary into account. For this, observe that if $X,Y,Z,W$ are arbitrary vectors with $Z$ and $W$ tangential to the boundary, then $R(X,Y,Z,W)$ contains at most one transversal (relative to the fluid boundary) derivative of the metric. This follows directly from the coordinate expression of the curvature
\begin{align*}
\begin{split}
R_{\kappa\lambda\mu\nu}=g_{\lambda\beta}(\partial_\nu\Gamma^\beta_{\mu\kappa}-\partial_\mu\Gamma_{\nu\kappa}^\beta+\Gamma_{\mu\kappa}^\alpha\Gamma_{\nu\alpha}^\beta-\Gamma_{\nu\kappa}^\alpha\Gamma_{\mu\alpha}^\beta).
\end{split}
\end{align*}
Therefore, if $X$ and $Y$ are tangential on $\partial\Omega$, we can view the contracted Bianchi identity 
\begin{align}\label{eq:cBian1}
\begin{split}
\sum_I \epsilon_I \nabla_{I}R_{IJXY}&=\nabla_XR_{YJ}-\nabla_YR_{XJ},
\end{split}
\end{align}
as a weak equation on 
\begin{align}\label{eq:Sigmaleaf1}
\begin{split}
\Sigma:=\Omega\cup\Omega^c.
\end{split}
\end{align} 
Note that the right-hand side of \eqref{eq:cBian1} contains only tangential derivatives of the Ricci curvature.  
To derive a hyperbolic system which can be used to estimate the curvature, we combine \eqref{eq:cBian1} with the differential Bianchi equation
\begin{align}\label{eq:dBian1}
\begin{split}
\nabla_{[I}R_{JK]XY}=0,
\end{split}
\end{align}
which can again be viewed as a weak equation when $X$ and $Y$ are tangential to the fluid boundary. We now describe these points in more detail. 
Since $\{e_I:I=0,1,2,3\}$ is a frame and the fluid boundary is timelike, at any point the projections of $e_0$ and two of $\{e_1,e_2,e_3\}$ to the tangent space of the boundary span this tangent space. At a given point, without loss of generality, assume that the projections of $\{e_0,e_1,e_2\}$ span the tangent space to the boundary.  Let
\begin{align*}
\begin{split}
\tilX_A:=\epsilon_A\frac{e_A-g(e_A,n)n}{\|e_A-g(e_A,n)n\|},\qquad A=0,1,2,
\end{split}
\end{align*}
where we recall that $n=\frac{\nabla\sigma^2}{\|\nabla\sigma^2\|}$. The frame components of $\tilX_A$ are denoted by $\tilX_A^I$, that is,
\begin{align}\label{eq:XAI1}
\begin{split}
\tilX_A= \tilX_A^I e_I,\quad \tilX_A^I:=\epsilon_Ig(\tilX_A,e_I)=\epsilon_I\frac{\delta_A^I-(\sum_K\epsilon_K(D_K\sigma^2)^{2})^{-1}\epsilon_{A}D_A\sigma^2 D_I\sigma^2}{\sqrt{1-(\sum_J \epsilon_J (D_J\sigma^2)^2)^{-1}(D_A\sigma^2)^2}}.
\end{split}
\end{align}
Let $X_A$ be obtained from $\tilX_A$ by Gram-Schmidt, that is
\begin{align*}
\begin{split}
X_0= \tilX_0,\quad X_1=\frac{\tilX_1+g(\tilX_1,X_0)X_0}{\|\tilX_1+g(\tilX_1,X_0)X_0\|}, \quad X_2=\frac{\tilX_2+g(\tilX_2,X_0)X_0-g(\tilX_2,X_1)X_1}{\|\tilX_2+g(\tilX_2,X_0)X_0-g(\tilX_2,X_1)X_1\|}.
\end{split}
\end{align*}
The frame components of $X_A$ are denoted by $X_A^I$. Then, for each pair $A\neq B$ we define the two form $F^{AB}$ whose components in the frame $\{e_I\}$ are
\begin{align*}
\begin{split}
F^{AB}_{IJ}=R(e_I,e_J,X_A,X_B).
\end{split}
\end{align*}
Often the particular choice of $A,B$ is not important and we simply write $F_{IJ}$ for $F^{AB}_{IJ}$. As noted above, \eqref{eq:Einstein2} and the symmetries of the curvature tensor, allow us to recover all the components of the curvature tensor algebraically from $F^{AB}_{IJ}$, $A,B=0,1,2$, $I,J=0,1,2,3$. Indeed, consider $R(Y,Z,n,X_{A})$, $A=0,1,2$ where $Y,Z$ are arbitrary vectorfields. If $Y=X_{B},Z=X_{C}$, then
\begin{align}\label{curvature recover 1}
	R(X_{B},X_{C},n,X_{A})=R(n,X_{A},X_{B},X_{C}).
\end{align}
If $Y=n,Z=X_{B}$, then
\begin{align}\label{curvature recover 2}
	R(n,X_{B},n,X_{A})=-\sum_{C}\epsilon_{C}R(X_{C},X_{B},X_{C},X_{A})+X_B^IX_A^JR_{IJ},
\end{align}
where $R_{IJ}$ is as in \eqref{eq:Einstein2}, and if $Y=X_{B},Z=n$, then
\begin{align}\label{curvature recover 3}
	R(X_{B},n,n,X_{A})=-R(n,X_{B},n,X_{A}).
\end{align}
Now for general $Y,Z$, $R(Y,Z,n,X_A)$ can be written as a linear combination of \eqref{curvature recover 1}, \eqref{curvature recover 2}, \eqref{curvature recover 3}. To derive the analogue of \eqref{eq:cBian1} and \eqref{eq:dBian1} for $F$, we divide the six components $F_{IJ}\equiv F^{AB}_{IJ}$ of $F$  into the electric parts $E_\tilI\equiv E^{AB}_\tilI$ and the magnetic parts $H^\tilI\equiv H_{AB}^\tilI$, $\tilI=1,2,3$, defined by 
\begin{align*}
\begin{split}
E_\tilI=F_{\tilI 0},\quad H^1=-F_{23},\quad H^2=-F_{31},\quad H^3=-F_{12}.
\end{split}
\end{align*}
The Bianchi equations \eqref{eq:cBian1} and \eqref{eq:dBian1} can be used to derive a Maxwell system for $(E,H)$ in the usual way. Indeed, contracting $\nabla^\mu R_{\mu\nu\lambda\kappa}=\nabla_\lambda R_{\kappa\nu}-\nabla_\kappa R_{\lambda \nu}$ with $X_A^\lambda$ and $X^\kappa_B$ we get
\begin{align}\label{Maxwell-inhomo1}
\begin{split}
\nabla^\mu F^{AB}_{\mu\nu}&= X_A^\lambda X_B^\kappa(\nabla_\lambda R_{\kappa\nu}-\nabla_\kappa R_{\lambda \nu})+R_{\mu\nu\lambda\kappa}(X_B^\kappa\nabla^\mu X_A^\lambda+X_A^\lambda \nabla^\mu X_B^\kappa)\\
&=\begin{cases}g(X_B,V)\nabla_{X_A}V_\nu-g(X_A,V)\nabla_{X_B}V_\nu+R_{\mu\nu\lambda\kappa}(X_B^\kappa\nabla^\mu X_A^\lambda+X_A^\lambda \nabla^\mu X_B^\kappa),\quad\mathrm{in~}\Omega\\ R_{\mu\nu\lambda\kappa}(X_B^\kappa\nabla^\mu X_A^\lambda+X_A^\lambda \nabla^\mu X_B^\kappa),\phantom{g(X_A,V)\nabla_{X_B}V_\nu-g(X_B,V)\nabla_{X_A}V_\nu+}\quad\mathrm{in~~}\Omega^c\end{cases}.
\end{split}
\end{align}
In terms of the frame components we get
\begin{align}\label{eq:Maxwell1}
\begin{split}
\epsilon_I D_I F^{AB}_{IK}&=\calI_{K}^{AB}
\end{split}
\end{align}
where
\begin{align}\label{eq:calIint1}
\begin{split}
\calI^{AB}_K&=\epsilon_K\epsilon_I (X_B^IX_A^J-X_A^IX_B^J)\Theta^I(D_J\Theta^K+\Theta^L\Gamma_{JL}^K)+\epsilon_I\Gamma_{II}^JF^{AB}_{JK}+\epsilon_I\Gamma_{IK}^JF^{AB}_{IJ}\\
&\quad+\epsilon_J R_{JKLI}\big(X_A^ID_JX_B^L+X_B^ID_JX_A^L+\Gamma_{JM}^L(X_B^MX_A^I+X_A^MX_B^I)\big),\qquad \mathrm{in~}\Omega,
\end{split}
\end{align}
and
\begin{align}\label{eq:calIext1}
\begin{split}
\calI^{AB}_K&=\epsilon_I\Gamma_{II}^JF^{AB}_{JK}+\epsilon_I\Gamma_{IK}^JF^{AB}_{IJ}\\\
&\quad+\epsilon_J R_{JKLI}\big(X_A^ID_JX_B^L+X_B^ID_JX_A^L+\Gamma_{JM}^L(X_B^MX_A^I+X_A^MX_B^I)\big),\qquad \mathrm{in~}\Omega^c.
\end{split}
\end{align}
Similarly, contracting the identity $\nabla_{[\mu}R_{\nu\lambda]\kappa\rho}=0$ with $X_A^\kappa$ and $X_B^\rho$ gives
\begin{align}\label{Maxwell-inhomo2}
\begin{split}
\nabla_{[\mu}F_{\nu\lambda]}^{AB}=X_B^\rho(\nabla_{[\mu}X_A^\kappa )R_{\nu\lambda]\kappa\rho}+X_A^\kappa(\nabla_{[\mu}X_B^\rho) R_{\nu\lambda]\kappa\rho}.
\end{split}
\end{align}
In terms of the frame components this becomes
\begin{align}\label{eq:Maxwell2}
\begin{split}
 D_{[I}F^{AB}_{JK]}=\calJ_{IJK}^{AB}
 \end{split}
\end{align}
where 
\begin{align}\label{eq:calJ1}
\begin{split}
\calJ_{IJK}^{AB}&= 2\Gamma_{[IK}^MF^{AB}_{J]M}+X_B^LR_{ML[JK}D_{I]}X_A^M+X_B^LR_{ML[JK}\Gamma_{I]N}^MX_A^N\\
 &\quad+X_A^MR_{ML[JK}D_{I]}X_B^L+X_A^MR_{ML[JK}\Gamma_{I]N}^LX_B^N.
\end{split}
\end{align}
Equations \eqref{eq:Maxwell1} and \eqref{eq:Maxwell2} can be written in terms of $E$ and $H$ as 
\begin{align}\label{eq:EH1}
\begin{split}
D_0E+\curl H= \calI,\qquad D_0H-\curl E=\calJ^\ast,
\end{split}
\end{align}
where $(\calJ^\ast)^I=\epsilon^{IJKL}\calJ_{JKL}$ and $\curl$ denotes the usual curl operator in dimension three with respect to $D_\tilI$, $\tilI=1,2,3$:
\begin{align*}
\begin{split}
\curl E = \pmat{D_2E_3-D_3E_2\\D_3E_1-D_1E_3\\D_1E_2-D_2E_1},
\end{split}
\end{align*}
and similarly for $\curl H$. Defining $W\equiv W^{AB}=(W_1,\dots,W_6)$ where $W_m=E_m$ for $m=1,2,3$ and $W_m=H^{m-3}$ for $m=4,5,6$, and with
\begin{align}\label{eq:Amatdef1}
\begin{split}
\calA^1=\tiny{\pmat{0&0&0&0&0&0\\0&0&0&0&0&-1\\0&0&0&0&1&0\\0&0&0&0&0&0\\0&0&1&0&0&0\\0&-1&0&0&0&0}},\quad \calA^2=\pmat{0&0&0&0&0&1\\0&0&0&0&0&0\\0&0&0&-1&0&0\\0&0&-1&0&0&0\\0&0&0&0&0&0\\1&0&0&0&0&0},\quad \calA^3=\pmat{0&0&0&0&-1&0\\0&0&0&1&0&0\\0&0&0&0&0&0\\0&1&0&0&0&0\\-1&0&0&0&0&0\\0&0&0&0&0&0},
\end{split}
\end{align}
equation~\eqref{eq:EH1} becomes a first order symmetric hyperbolic system for $W$ of the form 
\begin{align}\label{eq:W1}
\begin{split}
D_0 W+\sum_{\tilI=1}^3\calA^\tilI D_\tilI W= \calK, \qquad \calK=(\calI_1,\calI_2,\calI_3,\calJ^\ast_1,\calJ^\ast_2,\calJ^\ast_3).
\end{split}
\end{align}
Since $e_0$ does not coincide with $\partial_t$, and is not adapted to the foliation by constant $t$ slices, we further decompose this as
\begin{align}\label{eq:W2}
\begin{split}
\sum_{\mu=0}^3 \calB^\mu \partial_\mu W=\calK, \quad \calB^0:=e_0^0+\sum_{\tilI=1}^3 e_\tilI^0\calA^\tilI, \quad \calB^j:=e_0^j+\sum_{\tilI=1}^3e_\tilI^j \calA^\tilI,~j=1,2,3.
\end{split}
\end{align}
This is the weakly defined first order symmetric system for $W$ which can be used to prove a priori estimates and set up an iteration.  Note that the positivity of the coefficient for $\partial_t$ follows from the fact that $e_\tilI^0$ vanish on the initial slice, and are therefore small by continuity during the local in time evolution.

To summarize, equations \eqref{eq:e0mu1}, \eqref{eq:etransport1}, \eqref{eq:Gamma01}, \eqref{eq:Gammatransport1}, \eqref{eq:g1}, \eqref{eq:BoxTheta1}, \eqref{eq:Thetabdry1}, \eqref{eq:Sigma1}, \eqref{eq:Lambda1}, \eqref{curvature recover 1}, \eqref{curvature recover 2}, \eqref{curvature recover 3}, \eqref{eq:Maxwell1}, \eqref{eq:Maxwell2}, 
and \eqref{eq:W2} are the equations we use to prove a priori estimates and set up an iteration. To record the structure of these equations, we collect them in compact form here again, where the precise expressions for the source terms $\calF$ on the right-hand side are as in  \eqref{eq:e0mu1}, \eqref{eq:etransport1}, \eqref{eq:Gamma01}, \eqref{eq:Gammatransport1}, \eqref{eq:g1}, \eqref{eq:BoxTheta1}, \eqref{eq:Thetabdry1}, \eqref{eq:Sigma1}, \eqref{eq:Lambda1}, \eqref{curvature recover 1}, \eqref{curvature recover 2}, \eqref{curvature recover 3},
\eqref{eq:Maxwell1}, \eqref{eq:Maxwell2}, \eqref{eq:W2}:
\begin{align}\label{eq:collected1}
\begin{cases}
(\partial_t^2+\gamma D_n)\Theta^I = \calF_{\Theta^I,\partial\Omega}\qquad &\mathrm{on~}\partial\Omega\\
\Box \Theta^I = \calF_{\Theta^I,\Omega}\qquad &\mathrm{in~}\Omega\\
\Box \sigma^2=\calF_{\sigma^2}\qquad &\mathrm{in~}\Omega\\
\Box D_V\sigma^2 = \calF_{D_V\sigma^2}\qquad &\mathrm{in~}\Omega\\
\sigma^2\equiv 1,\quad D_V\sigma^2\equiv 0\qquad &\mathrm{on~}\partial\Omega\\
\partial_te_\tilI=\calF_{e_\tilI},\quad e_0=\frac{1}{\hatTheta^0}(\hatV-\hatTheta^{\tilI}e_\tilI)\qquad &\mathrm{in~}\Sigma\\
\partial_t\Gamma_{\tilI J}^K=\calF_{\Gamma_{\tilI J}^K},\quad \Gamma_{0J}^K=-\frac{\hatTheta^\tilI}{\hatTheta^0}\Gamma_{\tilI J}^K\qquad&\mathrm{in~}\Sigma\\
\sum_{\mu=0}^3 \calB^\mu \partial_\mu W^{AB}=\calF_{W^{AB}}\qquad&\mathrm{in~}\Sigma
\end{cases}.
\end{align}
Here recall from \eqref{eq:Sigmaleaf1} that $\Sigma=\Omega\cup\Omega^c$. Below we will also use the notation
\begin{align*}
\begin{split}
\Sigma_t=\Omega_t\cup\Omega_t^c= \Sigma\cap\{x^0=t\}.
\end{split}
\end{align*}
Note that in arriving at equation \eqref{eq:collected1} we have already assumed that we have a solution  and that $R$ is the Riemann curvature tensor of the metric $g$. Therefore, to recover the original equations \eqref{eq:Einstein2} and \eqref{eq:relEuler4} from \eqref{eq:collected1}, we naturally need the initial data to satisfy the equations we have used in the derivation. As stated in Theorem~\ref{thm:main1}, this is captured by requiring that the following vanishing requirements hold initially (corresponding to both the original fluid equations and various geometric identities such as the Ricci identity and the symmetries of the curvature tensor):
\begin{align}
&\nabla_{I}\Theta^{I}\vert_{t=0},\quad (\nabla_{I}\Theta_{J}-\nabla_{J}\Theta_{I})\vert_{t=0},\quad (\sigma^{2}+\Theta^{I}\Theta_{I})\vert_{t=0}, \quad(\Theta^{J}\nabla_{J}\Theta_{I}+\frac12\nabla_{I}\sigma^{2})\vert_{t=0},\label{def vanishing fluid int}\\
 &(\Theta^{J}\nabla_{J}\left(\Theta^{K}\nabla_{K}\Theta_{I}\right)-\frac12\left(\nabla^{J}\sigma^{2}\right)\nabla_{J}\Theta_{I}+\frac12\nabla_{I}(\Theta^{J}\nabla_{J}\sigma^{2}))\vert_{t=0},\quad (\Box\sigma^{2}+2(\nabla^{I}\Theta^{J})(\nabla_{I}\Theta_{J})+(2\sigma^{2}-1)\sigma^{2})\vert_{t=0}=0,\nonumber
\end{align}
for the fluid and 
\begin{align}\label{def vanishing geom int}
\begin{split}
&(\Gamma_{JI}^{K}e_K-\Gamma_{IJ}^{K}e_K+[e_{I},e_{J}],\quad R_{[IJK]L})\vert_{t=0},\quad (R_{IJKL}-R_{KLIJ})\vert_{t=0}, \nabla_{[I}R_{JK]LM}\vert_{t=0}\\
&(\nabla^{J}R_{JIKL}-\chi_{\Omega}\left(\Theta_{L}\nabla_{K}\Theta_{I}-\Theta_{K}\nabla_{L}\Theta_{I}\right))\vert_{t=0},\quad (R_{IJ}-\chi_{\Omega}(\Theta_{I}\Theta_{J}+\frac12g_{IJ}))\vert_{t=0},\\
&(R^{K}{}_{MIJ}-\left(D_{I}\Gamma_{JM}^{K}-D_{J}\Gamma_{IM}^{K}\right)-\left(\Gamma_{IL}^{K}\Gamma_{JM}^{L}-\Gamma_{JL}^{K}\Gamma_{IM}^{L}\right)-\left(\Gamma_{JI}^{L}-\Gamma_{IJ}^{L}\right)\Gamma_{LM}^{K})\vert_{t=0}=0,
\end{split}
\end{align} 
for the geometric quantities.

We end this section by briefly describing some of the main ideas used to study \eqref{eq:collected1}. The first two equations for the fluid velocity contain the hyperbolic Dirichlet-Neumann map. They are used to get control of
\begin{align*}
\begin{split}
\sup_t\|\partial_{t,x}\Theta\|_{L^2(\Omega_t)}+ \sup_t\|\partial_t\Theta\|_{L^2(\partial\Omega_t)}.
\end{split}
\end{align*}
This is achieved by multiplying both equations by $\partial_t\Theta$ and observing the crucial cancellations of the boundary terms with unfavorable signs. The wave equation for $D_V\sigma^2$ with Dirichlet boundary conditions is used to get control of
\begin{align*}
\begin{split}
\sup_t\|\partial_{t,x}D_V\sigma^2\|_{L^2(\Omega_t)}+\|\partial_{t,x}D_V\sigma^2\|_{L^2_tL^2(\Omega_t)}.
\end{split}
\end{align*}
This is achieved by the choice of a suitable multiplier consisting of an appropriate linear combination of $\partial_t$ and the normal $n$.  This choice is necessary as $\|\partial_{t,x}D_V\sigma^2\|_{L^2_tL^2(\Omega_t)}$ comes up as a source term in the energy estimate for $\Theta$. In treating the nonlinearity in the equation for $D_V\sigma^2$ we use the wave equation for $\sigma^2$ with Dirichlet conditions as an elliptic equations to estimate two derivatives of $\sigma^2$ in terms of one derivative of $D_V\sigma^2$. The second and third equations from the bottom in \eqref{eq:collected1} are used as transport and algebraic equations for the frame and connection coefficients in terms of the other unknowns. The last equation, which is a symmetric first order hyperbolic system, and the algebraic relations \eqref{curvature recover 1}, \eqref{curvature recover 2}, \eqref{curvature recover 3} are used to estimate $\|R_{IJKL}\|_{L^2}$. The curvature itself appears both on the right-hand side of the transport equation for $\Gamma$ and in all other equations after commuting derivatives. 

To control higher order derivatives of the unknowns, we commute $\partial_t^k$ with \eqref{eq:collected1}. This is important because $\partial_t$ is tangential to the boundary and commuting $\partial_t$ preserves the main structure of the left-hand side of \eqref{eq:collected1}. We then need to use elliptic estimates to turn control of $\partial_t^k$ of the unknowns into control of arbitrary derivatives. For the fluid velocity $\Theta$, this is done by using the boundary equation to express the normal derivative $D_n\Theta$ in terms of $\partial_t^2\Theta$, and then using the interior equation as an elliptic equation with Neumann boundary data. Consequently $L^2$ control of an arbitrary spatial derivative requires $L^2$ control of two time derivatives. Since $\Theta$ appears as a source term in all other equations, this relation between space and time derivatives holds for the other unknowns as well. For $D_V\sigma^2$, since $D_V\sigma^2\equiv0$ on $\partial\Omega$, elliptic estimates are obtained by viewing the wave equation as an elliptic equation with Dirichlet boundary conditions, and with $\partial D_V\sigma^2$ as source term. For the curvature, we return to the Maxwell system \eqref{eq:EH1} satisfied by the electric and magnetic decomposition. Combined with the divergence equations for $E$ and $H$, as well as the algebraic relations \eqref{curvature recover 1},  \eqref{curvature recover 2}, \eqref{curvature recover 3}, this yields a curl-divergence system which allows us to estimate spatial derivatives of $R$. The proof of a priori estimates described in this and the previous paragraphs is contained in Section~\ref{sec:apriori}. 

The iteration, which is the content of Section~\ref{sec:iteration}, is also based on equations \eqref{eq:collected1}. Since the curvature equation satisfies a first order symmetric hyperbolic system (in the weak sense), the linear existence theory for the last equation in \eqref{eq:collected1} is known. However, the linear existence theory for the fluid variables is not standard due to the unusual boundary equation given in the first equation of \eqref{eq:collected1}. For this we borrow the existence theory which was developed in our earlier work \cite{MSW1} using Galerkian approximations based on a weak formulation of the fluid equations in \eqref{eq:collected1}. The convergence of the iteration follows from the similar arguments to those used to prove a priori estimates. A main technical difficulty, however, is that since the relations \eqref{curvature recover 1},  \eqref{curvature recover 2}, \eqref{curvature recover 3} are used to define arbitrary curvature components algebraically in terms of those with tangential components, and the metric is defined by the requirement that $\{e_I\}$ be an orthonormal frame, it is not immediate that $R$ is the curvature tensor of the metric and that equation \eqref{eq:Einstein2} is satisfied. This requires a careful analysis of the choices made in setting up the iteration and deriving homogeneous equations for what we refer to as \emph{vanishing quantities}, that is quantities that vanish if $R$ is the curvature tensor of the constructed metric and \eqref{eq:Einstein2} holds. Similarly, since the fluid equations are at the level of a derivative of the original equations \eqref{eq:relEuler4}, we need to derive homogeneous equations for the vanishing quantities (such as the divergence and vorticity of the velocity) corresponding to the fluid to show that \eqref{eq:relEuler4} is satisfied. We show that the fluid and geometric vanishing quantities satisfy a coupled system with similar structure to \eqref{eq:collected1}, and therefore vanish due to the vanishing on the initial slice and in view of the a priori estimates established in Section~\ref{sec:apriori}.
\subsection{The initial data}\label{subseq:data}
Here we describe how to construct initial data for the system \eqref{eq:collected1} based on the initial data for \eqref{eq:Einstein1}. Recall that the initial data for \eqref{eq:Einstein1}, with the choice \eqref{eq:Thp1}, consist of the following: A symmetric positive definite two form $\barg$ (the metric), a symmetric two form $k$ (the second fundamental form), and two scalar functions $\phi_0$ and $\phi_1$ (initial data for $\phi$). They are assumed to satisfy the constraint equations\footnote{We refer the reader to, for instance, the monograph \cite{CB} by Choquet-Bruhat for discussion on the constraint equations and methods of constructions of initial data for various matter models.}
\begin{align*}
	\begin{split}
		&\barR-k_{ij}\,k^{ij}+(\tr_{\barg}k)^{2}=\chi_{\Omega_{0}}\big((\phi_1)^2+(\barg^{-1})^{ij}\partial_{i}\phi_0\partial_{j}\phi_0+1\big),\\
		&\barnabla^{j}k_{ji}-\barnabla_{i}\big(\tr_{\barg}k\big)=\chi_{\Omega_{0}}\,\phi_1\,\partial_{i}\phi_0.
	\end{split}
\end{align*}
Here quantities with an overline, such as $\barR$ and $\barnabla$, are associated with the metric $\barg$ on the initial slice $\Sigma_0$ (which is given and diffeomorphic to $\bbR^3$), and roman indices take values in $\{1,2,3\}$ and correspond to a fixed coordinate choice on the initial slice. Indices are raised and lowered with respect to $\barg$ and $\barg^{-1}$. We also extend $\phi_0$ and $\phi_1$ to the entire slice $\Sigma_0=\{t=0\}$, using a Sobolev extension, such that $\phi_0\equiv0$ and $\phi_1\equiv 1$ outside a compact set. Once the metric is given we extend $V$ to the exterior by $V=\nabla\phi$. Now recall that our convention in Lagrangian coordinates is that $\partial_t=\hatV= \frac{V}{\|V\|}$. This leads to
\begin{align}\label{g00 data}
	g_{00}|_{t=0}=-1, \qquad \partial_tg_{00}\vert_{t=0}=0,
\end{align}
and 
\begin{align*}
	g_{0i}\vert_{t=0}=g(\partial_{t},\partial_{i})=\frac{1}{V^{0}}\partial_{i}\phi_0.
\end{align*}
To determine $V^0$, note that since $V=V^0\partial_t=\nabla\phi$, and $g_{00}=-1$, initially
\begin{align*}
	\begin{split}
		-(V^0)^2=g(V,V)=g(\nabla\phi,\nabla\phi)=-\phi_1^2+(\barg^{-1})^{ij}\partial_i\phi_0\partial_j\phi_0,
	\end{split}
\end{align*}
so
\begin{align*}
	\begin{split}
		V^0\vert_{t=0}=(\phi_1^2-(\barg^{-1})^{ij}\partial_i\phi_0\partial_j\phi_0)^{\frac{1}{2}},
	\end{split}
\end{align*}
and
\begin{align}\label{eq:g0idata}
	\begin{split}
		g_{0i}\vert_{t=0}=\frac{\partial_i\phi_0}{(\phi_1^2-(\barg^{-1})^{ij}\partial_i\phi_0\partial_j\phi_0)^{1/2}}.
	\end{split}
\end{align}
We also note that $V^0$ also determines $\partial_t\phi\vert_{t=0}$ by
\begin{align}\label{eq:dtphidata}
	\begin{split}
		\partial_{t}\phi|_{t=0}=V_{0}|_{t=0}=g_{00}V^{0}|_{t=0}=-(\phi_1^2-(\barg^{-1})^{ij}\partial_i\phi_0\partial_j\phi_0)^{\frac{1}{2}},
	\end{split}
\end{align}
and that knowledge of $\barg$ and the choices \eqref{g00 data} and \eqref{eq:g0idata} fully determine $g(0)=g\vert_{t=0}$ and hence $g^{-1}(0)=g^{-1}\vert_{t=0}$. To determine the initial values of $\partial_tg_{ij}$, let $T$ be the future-directed unit normal vectorfield to $\Sigma_{0}$, that is,
\begin{align}\label{def T}
	T=-\frac{(g^{-1})^{00}}{\sqrt{-(g^{-1})^{00}}}\partial_{t}-\frac{(g^{-1})^{0i}}{\sqrt{-(g^{-1})^{00}}}\partial_{i}.
\end{align}
By the definition of $k$ we have
\begin{align}\label{def k}
	2k_{ij}=\left(\calL_{T}g\right)_{ij}=T(g_{ij})-g([T,\partial_{i}],\partial_{j})-g(\partial_{i},[T,\partial_{j}])
\end{align}
Since 
\begin{align*}
	\left[T,\partial_{k}\right]=&\partial_{k}\Big(\frac{(g^{-1})^{00}}{\sqrt{-(g^{-1})^{00}}}\Big)\partial_{t}+\partial_{k}\Big(\frac{(g^{-1})^{0i}}{\sqrt{-(g^{-1})^{00}}}\Big)\partial_{i},
\end{align*}
and $(g^{-1})^{00}\slashed{=}0$, the initial data for $k_{ij}$ and the initial data for $g_{\alpha\beta}$ as well as their spatial derivatives give the initial data for $\partial_{t}g_{ij}$:
\begin{align}\label{eq:dtgijdata}
	\begin{split}
		\sqrt{-(g^{-1})^{00}}\partial_tg_{ij}=&2k_{ij}+\partial_i\left(\frac{(g^{-1})^{0\alpha}}{\sqrt{-(g^{-1})^{00}}}\right)g_{\alpha j}+\partial_j\left(\frac{(g^{-1})^{0\alpha}}{\sqrt{-(g^{-1})^{00}}}\right)g_{\alpha i}\\
		&+\frac{(g^{-1})^{0k}}{\sqrt{-(g^{-1})^{00}}}\partial_{k}g_{ij}\qquad \mathrm{at~} t=0.
	\end{split}
\end{align}
Next we consider the initial data of the Christoffel symbols. We start with the components $\Gamma_{ij}^{\mu}$. By definition,
\begin{align*}
	\Gamma_{ij}^{\mu}=(g^{-1})^{\mu k}g(\nabla_{\partial_{i}}\partial_{j},\partial_{k})+(g^{-1})^{\mu0}g(\nabla_{\partial_{i}}\partial_{j},\partial_{t}).
\end{align*}
The first term on the right above can be written as 
\begin{align*}
	(g^{-1})^{\mu k}g_{k\ell}\barGamma^{\ell}_{ij},
\end{align*}
whose initial data is determined. For the second term, we note that
\begin{align*}
	\partial_{t}=\frac{1}{\sqrt{-(g^{-1})^{00}}}T+\frac{(g^{-1})^{0i}}{(g^{-1})^{00}}\partial_{i},
\end{align*}
so that
\begin{align}\label{eq:datatemp1}
	g(\nabla_{\partial_{i}}\partial_{j},\partial_{t})
	=&-\frac{1}{\sqrt{-(g^{-1})^{00}}}k_{ij}+\frac{(g^{-1})^{0k}}{(g^{-1})^{00}}g_{k\ell}\barGamma_{ij}^{\ell}.
\end{align}
Therefore the initial data for $\Gamma_{ij}^{\mu}$ is
\begin{align}\label{eq:Gammaijmudata}
	\begin{split}
		\Gamma_{ij}^\mu=(g^{-1})^{\mu k}g_{k\ell}\barGamma^{\ell}_{ij}+(g^{-1})^{\mu0}\Big(-\frac{1}{\sqrt{-(g^{-1})^{00}}}k_{ij}+\frac{(g^{-1})^{0k}}{(g^{-1})^{00}}g_{k\ell}\barGamma_{ij}^{\ell}\Big)\qquad \mathrm{at~}t=0.
	\end{split}
\end{align}
For the components $\Gamma_{ij}^{0}$ we have
\begin{align*}
	\Gamma_{ij}^{0}=(g^{-1})^{00}g(\nabla_{\partial_{i}}\partial_{j},\partial_{t})+(g^{-1})^{0k}g(\nabla_{\partial_{i}}\partial_{j},\partial_{k}).
\end{align*}
For the components $\Gamma_{i0}^{\mu}$, since $g(\partial_{t},\partial_{t})=-1$,
\begin{align*}
	\Gamma_{i0}^{\mu}
	=(g^{-1})^{\mu j}g(\nabla_{\partial_{i}}\partial_{t},\partial_{j})=(g^{-1})^{\mu j}\partial_i g_{0j}-(g^{-1})^{\mu j}g(\nabla_{\partial_i}\partial_j, \partial_t).
\end{align*}
It follows from \eqref{eq:datatemp1} that
\begin{align}\label{eq:Gammai0mudata}
	\begin{split}
		\Gamma_{i0}^{\mu}=(g^{-1})^{\mu j}\partial_i g_{0j}-(g^{-1})^{\mu j}\Big(-\frac{1}{\sqrt{-(g^{-1})^{00}}}k_{ij}+\frac{(g^{-1})^{0k}}{(g^{-1})^{00}}g_{k\ell}\barGamma_{ij}^{\ell}\Big)\qquad \mathrm{at~}t=0.
	\end{split}
\end{align}
For the components $\Gamma_{00}^{\mu}$, arguing as above, using \eqref{g00 data} and $\nabla_{\partial_t}\partial_k=\nabla_{\partial_k}\partial_t$,
\begin{align}\label{eq:datatemp2}
	\Gamma_{00}^{\mu}=(g^{-1})^{\mu k}g(\nabla_{\partial_{t}}\partial_{t},\partial_{k})=(g^{-1})^{\mu k}\partial_tg_{0k}-(g^{-1})^{\mu k }g(\partial_t,\nabla_{\partial_k}\partial_t)=(g^{-1})^{\mu k}\partial_tg_{0k}.
\end{align}
Therefore, the initial data of the Christoffel symbols will be determined once we know the initial data for $\partial_t g_{0k}$. For this, recall that $g_{0k}=\frac{1}{V^0}g(V,\partial_k)=\frac{1}{V^0}\partial_k\phi$. It follows that
\begin{align*}
	\begin{split}
		\partial_tg_{0k}=\frac{\partial_{tk}^2\phi}{V^0}+\frac{\partial_k\phi}{(V^0)^2}\partial_tV^0.
	\end{split}
\end{align*}
The first term is already determined initially in view of \eqref{eq:dtphidata}. For the second term, since $V$ is divergence free (equivalently, since $\Box\phi=0$), by \eqref{eq:datatemp2}
\begin{align*}
	\begin{split}
		\partial_tV^0&= -\Gamma^0_{00}V^0-\Gamma^{i}_{i0}V^0=\Gamma_{i0}^iV^0-(g^{-1})^{0k}V^0\partial_tg_{0k}\\
		&=\Gamma_{i0}^iV^0-(g^{-1})^{0k}\partial_{tk}^2\phi+(g^{-1})^{0k}g_{0k}\partial_tV^0=\Gamma_{i0}^iV^0-(g^{-1})^{0k}\partial_{tk}^2\phi+(1-g_{00}(g^{-1})^{00})\partial_tV^0.
	\end{split}
\end{align*}
Since $g_{00}=-1$ we get $\partial_tV^0=\frac{1}{(g^{-1})^{00}}((g^{-1})^{0k}\partial^2_{tk}\phi-\Gamma_{j0}^0V^0)$ and hence
\begin{align}\label{eq:dtg0kdata}
	\begin{split}
		\partial_tg_{0k}=\frac{\partial_{tk}^2\phi}{V^0}+\frac{\partial_k\phi}{(g^{-1})^{00}(V^0)^2}((g^{-1})^{0k}\partial^2_{tk}\phi-\Gamma_{j0}^jV^0)\qquad \mathrm{at~}t=0,
	\end{split}
\end{align}
and by \eqref{eq:datatemp2}
\begin{align}\label{eq:Gamma00mudata}
	\begin{split}
		\Gamma^\mu_{00}=(g^{-1})^{\mu k}\partial_t g_{0k}\qquad \mathrm{at~}t=0.
	\end{split}
\end{align}
Having determined the data for $g$, $\partial_tg$ and $\Gamma_{\alpha\beta}^\gamma$, the data of the curvature are also determined in the usual way. The data for $R_{ij\ell m}$ and $R_{\ell Tij}$ are given by the Gauss and Codazzi equations
\begin{align}\label{eq:GaussCodazzi}
	\begin{split}
		&R_{ij\ell m}\vert_{t=0}=\barR_{ij\ell m}-k_{ij}k_{\ell m}+k_{i\ell}k_{jm},\\
		&R_{\ell Tij}\vert_{t=0}=\barnabla_{i}k_{j\ell}-\barnabla_{j}k_{i\ell}.
	\end{split}
\end{align}
Similarly to determine $R_{iT\ell T}$ we note that
\begin{align*}
	R_{ij}=-R_{iTj T}+(\barg^{-1})^{\ell m}R_{i\ell j m}=-R_{iTj T}+\barR_{ij}-k_{i\ell}k_{j}^{\ell}+k_{ij}\tr_{\barg}k.
\end{align*}
Rearranging and using the Einstein equations we get
\begin{align}\label{eq:RiTjTdata}
	\begin{split}
		R_{iTj T}=k_{ij}\tr_{\barg}k-k_{i\ell}k_{j}^{\ell}-(\partial_i\phi\partial_j\phi+\frac{1}{2}g_{ij})\chi_{\Omega_0}+\barR_{ij}\qquad\mathrm{at~}t=0.
	\end{split}
\end{align}
Equations \eqref{eq:RiTjTdata}, \eqref{eq:GaussCodazzi}, \eqref{eq:Gamma00mudata}, \eqref{eq:dtg0kdata}, \eqref{eq:Gammai0mudata}, \eqref{eq:Gammaijmudata}, \eqref{eq:dtgijdata}, \eqref{eq:dtphidata}, \eqref{eq:g0idata}, \eqref{g00 data} now fully determine the initial data for $g_{\alpha\beta}$, $\partial_\gamma g_{\alpha\beta}$, $\Gamma_{\alpha\beta}^\gamma$, and $R_{\alpha\beta\gamma\delta}$. Finally, we remark that this information, together with a choice of initial frame $\{e_I\}$ (which can be chosen since $g$ is determined initially), determines the initial values of $\Gamma_{IJ}^K$ and $R_{IJKL}$ as well. For the curvature, this simply follows from the knowledge of $R_{\alpha\beta\gamma\delta}$. For the connection coefficients, first note that the condition $\nabla_{\partial_t}e_I=0$ implies
\begin{align}\label{eq:dtedata}
	\begin{split}
		\partial_te_I^\mu=-\Gamma_{0\nu}^\mu e_I^\nu\qquad \mathrm{at~}t=0.
	\end{split}
\end{align}
The initial values of $\Gamma_{IJ}^K$ are then determined by the relation
\begin{align}\label{eq:GammaIJKdata}
	\begin{split}
		\Gamma_{IJ}^K=\epsilon_K e_K^{\beta}e_I^\gamma e_J^\alpha g_{\beta\delta}\Gamma^\delta_{\gamma\alpha}+\epsilon_K e_I^\gamma e_K^\beta g_{\alpha\beta}\partial_\gamma e_J^\alpha\qquad \mathrm{at~}t=0.
	\end{split}
\end{align}
\section{A Priori Estimates}\label{sec:apriori}
In this section we prove a priori estimates for the derived system \eqref{eq:collected1}. Besides being of independent interest in controlling the evolution of the system, the a priori estimates provide the framework for setting up the iteration for the proof of Theorem~\ref{thm:main1}. Our starting point is the following definition of the energies: 
\begin{align*}
	\begin{split}
		E_k(T)&:=\sup_{0\leq t \leq T}\Big(\|\partial \partial_t^k\Theta\|_{L^2(\Omega_t)}^2+\|\partial_t^{k+1}\Theta\|_{L^2(\partial\Omega_t)}^2+\|\partial \partial_t^{k+1}\sigma^2\|_{L^2(\Omega_t)}^2+\|\partial_t^{k}R\|_{L^2(\Sigma_t)}^2\Big) +\|\partial \partial_t^{k+1}\sigma^2\|_{L^2(\partial\Omega_0^T)}^2,
	\end{split}
\end{align*}
and
\begin{align*}
	\begin{split}
		\calE_\ell(T)=\sum_{k\leq \ell}E_k(T).
	\end{split}
\end{align*}
The following proposition contains the main a priori estimate for this work. It is proved in Subsection~\ref{subsec:apriori}. 
\begin{proposition}\label{prop:apriori}
	Suppose $(\Theta,R,\partial_t\sigma^2,\sigma^2)$ is a solution to the system \eqref{eq:collected1}
		and $\calE_\ell(T)\leq C_1$ for some constant $C_1$ with $\ell$ sufficiently large. Let
	\begin{align*}
		\begin{split}
			\scE_\ell(T)&=\calE_\ell(T)+\sup_{0\leq t\leq T}\sum_{2p+k\leq \ell+2}\Big(\|\partial^p\partial_t^k\Theta\|_{L^2(\Omega_t)}^2+\|\partial^p\partial_t^{k+1}\sigma^2\|_{L^2(\Omega_t)}^2\Big)\\
			&\quad+\sup_{0\leq t \leq T}\sum_{2p+k\leq \ell+1}\Big(\|\partial^p\partial_t^{k}R\|_{L^2(\Omega_t)}^2+\|\partial^p\partial_t^{k}R\|_{L^2(\Omega_t^c)}^2\Big).
		\end{split}
	\end{align*}
	If $T>0$ is sufficiently small depending on $\scE_\ell(0)$, $C_1$, $c_0$ (recall that $c_{0}$ is defined in \eqref{eq:Taylor1}) and $\ell$, then for some polynomial function $\calP_\ell$ (independent of $C_1$),
	\begin{align*}
		\begin{split}
			\calE_\ell(T)\leq \calP_{\ell}(\scE_\ell(0)).
		\end{split}
	\end{align*}
\end{proposition}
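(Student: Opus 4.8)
The plan is to close a bootstrap/continuity argument on $[0,T]$ for the coupled system \eqref{eq:collected1}, treating the various blocks — the fluid velocity $\Theta$, the auxiliary quantity $D_V\sigma^2$, the frame/connection coefficients $e_I,\Gamma_{IJ}^K$, and the curvature $W=(E,H)$ — with their respective energy methods, and then converting control of $\partial_t^k$-derivatives into control of all derivatives via elliptic estimates. The first step is to differentiate \eqref{eq:collected1} by $\partial_t^k$ for $k\le \ell$; since $\partial_t$ is tangential to $\partial\Omega$, this preserves the structure of the left-hand sides (the boundary operator $\partial_t^2+\gamma D_n$, the wave operators $\Box$, the transport operators $\partial_t$, and the symmetric hyperbolic operator $\sum_\mu \calB^\mu\partial_\mu$), while producing commutator terms that must be shown to be lower order relative to $\scE_\ell$. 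The commutators involve factors of $\Gamma$, $e_I$, $\gamma$, $n$, $\partial_t\sigma^2$ and their derivatives, all of which are controlled by $\scE_\ell$ under the assumed bound $\calE_\ell(T)\le C_1$; I would invoke Sobolev embedding and product/Moser-type estimates on the slices $\Omega_t$ and $\Sigma_t$ (whose geometry is fixed in the Lagrangian coordinates, so the relevant Sobolev constants depend only on $\Omega_0$).

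Next, for the fluid velocity $\Theta$, I would run the energy estimate simultaneously on the first two equations of \eqref{eq:collected1}: multiply $\Box\partial_t^k\Theta^I$ by $\partial_t^{k+1}\Theta_I$ and integrate over $\Omega_t$, integrate by parts, and observe that the boundary term generated by the interior wave equation is cancelled (up to lower order and a Gronwall-absorbable term) against the energy identity coming from multiplying the boundary equation $(\partial_t^2+\gamma D_n)\partial_t^k\Theta^I=\calF$ by $\partial_t^{k+1}\Theta_I$ on $\partial\Omega_t$; here the relativistic Taylor sign condition \eqref{eq:Taylor1}, which persists with a slightly smaller constant under the bootstrap, gives the coercivity $\gamma\ge c_0/(2\sigma^2)>0$ needed to see $\|\partial_t^{k+1}\Theta\|_{L^2(\partial\Omega_t)}$ as a positive energy. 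The source $\calF_{\Theta^I,\Omega}$ (see \eqref{eq:BoxTheta1}) contains $D_K\Gamma$ and $\Gamma\cdot\Theta$ terms, so closing this requires that curvature/connection derivatives be no worse than what the curvature block provides. For $D_V\sigma^2$ I would use the wave equation with Dirichlet data $D_V\sigma^2|_{\partial\Omega}=0$, multiplying by a carefully chosen multiplier — a combination $\alpha\partial_t+\beta n$ with coefficients tuned so that the boundary contributions have a definite sign — to obtain both $\sup_t\|\partial\partial_t^k D_V\sigma^2\|_{L^2(\Omega_t)}$ and the spacetime norm $\|\partial\partial_t^k D_V\sigma^2\|_{L^2_tL^2(\Omega_t)}$; the latter is essential because it appears as a source in the $\Theta$ estimate via $\calF_{\Theta^I,\partial\Omega}$ (which contains $D_ID_V\sigma^2$). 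The nonlinearity in $\calF_{D_V\sigma^2}$ (see \eqref{eq:Lambda1}) contains $D^2\sigma^2$, which I would control by treating the $\sigma^2$-wave equation \eqref{eq:Sigma1} with Dirichlet data as an elliptic equation, trading two derivatives of $\sigma^2$ for one derivative of $D_V\sigma^2$.

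For the curvature I would work with the Maxwell/curl-divergence system: the symmetric hyperbolic system \eqref{eq:W2} gives $\sup_t\|\partial_t^k W\|_{L^2(\Sigma_t)}$ directly by the standard energy identity (using that $\calB^0$ is positive-definite since $e_\tilI^0$ vanish initially and stay small), with sources $\calK$ that are linear in $R$ and in $\Theta,\Gamma,D\Theta,DX$; the algebraic relations \eqref{curvature recover 1}--\eqref{curvature recover 3} together with \eqref{eq:Einstein2} recover all components of $R$ from the $F^{AB}$'s, so this controls $\|\partial_t^k R\|_{L^2(\Sigma_t)}$. To get spatial derivatives of $R$ I would combine \eqref{eq:EH1} with the divergence constraints for $E,H$ (derived from the Bianchi identities and \eqref{eq:Einstein2}) into a first-order curl-div elliptic system on each slice, yielding $\|\partial^p\partial_t^k R\|_{L^2}$ in terms of $\|\partial_t^{k+p}R\|_{L^2}$ plus lower-order terms — noting the slice $\Sigma_t$ has a boundary $\partial\Omega_t$, so appropriate boundary conditions (the tangential components being controlled) enter here. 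The frame and connection coefficients are handled last and most easily: \eqref{eq:frame1} and \eqref{eq:connection1} are transport equations along $\partial_t$, so $\partial_t^k\Gamma$ and $\partial_t^k e_I$ are recovered by integrating in time, and spatial derivatives follow by commuting $\partial^p$ through the transport equations; the only curvature input is $R$ on the right of the $\Gamma$-equation, which the curvature block supplies. Then an elliptic bootstrap — using the $\Theta$-equation as a Neumann problem (the boundary equation expresses $D_n\Theta$ in terms of $\partial_t^2\Theta$) and the $\sigma^2$/$D_V\sigma^2$ equations as Dirichlet problems — upgrades $\partial_t^k$-control to full $\scE_\ell$-control, at the cost of two time derivatives per spatial derivative (hence the shape of $\scE_\ell$ with $2p+k$ budgets). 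Assembling all the blocks gives a differential inequality $\frac{d}{dt}\calE_\ell \le \calP_\ell(\scE_\ell) \le \calP_\ell(\calE_\ell)$ (after absorbing the elliptic estimates), and integrating via a continuity argument on a short interval $[0,T]$ with $T$ small depending on $\scE_\ell(0),C_1,c_0,\ell$ yields the claimed bound $\calE_\ell(T)\le \calP_\ell(\scE_\ell(0))$.

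The main obstacle I anticipate is the boundary analysis for $\Theta$: one must track precisely the boundary terms arising from integrating by parts in the interior wave equation and verify their exact cancellation against the energy identity for the hyperbolic Dirichlet–Neumann boundary equation, all while the commutator terms $[\,\partial_t^k,\gamma D_n]\Theta$ and the source $D_ID_V\sigma^2$ must be estimated in a norm compatible with the $L^2_tL^2(\Omega_t)$ bound available for $D_V\sigma^2$ — this coupling forces the two energy estimates to be run in tandem rather than sequentially. A secondary difficulty is ensuring the elliptic gain is uniform: since the fluid boundary $\partial\Omega_t$ and the curvature's spatial slice both require boundary regularity, one must check that the a priori bound on $\Theta$ (which controls the geometry of the metric via $e_I$ and hence the coefficients of the elliptic operators) is enough to make the elliptic constants depend only on $C_1$, $c_0$, and $\Omega_0$, not on higher norms; this is where the precise ordering of the bootstrap — curvature and connection controlled at order $\ell$, fluid at order $\ell+2$ in the $\scE_\ell$ budget — becomes essential.
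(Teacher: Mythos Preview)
Your overall strategy matches the paper's proof closely: the same block-by-block energy analysis (coupled interior--boundary estimate for $\Theta$ via Lemma~\ref{lem:Venergy1}, multiplier $Q=\partial_t\pm\alpha n$ for $D_V\sigma^2$ via Lemma~\ref{lem:sigmaenergy}, symmetric-hyperbolic estimate for $W$ via Lemma~\ref{lem:Wenergy}, transport for $e,\Gamma$), followed by elliptic upgrades (Neumann for $\Theta$, Dirichlet for $\sigma^2$, curl--divergence for $R$) with the $2p+k$ bookkeeping. Your identification of the $L^2_tL^2(\partial\Omega_t)$ control of $\partial D_V\sigma^2$ as essential, and of the boundary cancellation as the delicate step, is also correct.

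There is, however, one point you underestimate. After commuting $\partial_t^k$ with $\Box$ at the top order $k=\ell$, the commutator produces a term of the form $D^2\partial_t^{\ell-1}\Theta$ in the source for both $\Box\partial_t^\ell\Theta$ and $\Box\partial_t^{\ell+1}\sigma^2$ (see Lemmas~\ref{lem:Vinthigh} and~\ref{lem:sigmahigh}). This term is \emph{not} controlled by $\scE_\ell$: the budget $2p+k\le\ell+2$ allows at most $\partial^2\partial_t^{\ell-2}\Theta$, so it cannot be handled by the product/Moser estimates you propose. The paper (Steps~1--2 of the proof) resolves this by integrating by parts once in space and then once in $t$, moving a derivative onto the multiplier; the spacetime terms produced are then genuinely lower order, but a boundary contribution on $\partial\Omega$ involving $\partial\partial_t^{\ell-1}\Theta$ survives. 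That boundary term is not in $\calE_\ell$ either and requires its own estimate, namely Lemma~\ref{lem:dVboundary}, which is proved by applying a \emph{second} multiplier $Q=\partial_t+\alpha n$ (Lemma~\ref{lem:nablaVGamma}) to $\Box\partial_t^{\ell-1}\Theta$ to extract $\|\partial\partial_t^{\ell-1}\Theta\|_{L^2(\partial\Omega_0^T)}$. Your outline closes once this step is inserted; without it the top-order estimate does not quite close.
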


\subsection{Commutator identities and higher order equations}\label{subsec:commutators}
We first compute commutator identities with the main linear operators, and use these to derive higher order equations.
\begin{lemma}\label{lem:commutators}
	For any scalar function $u$, 
	\begin{align*}
		\begin{split}
			[\partial_t,D_I]u=&- (D_I\hatTheta^J+\hatTheta^K\Gamma_{I K}^J)D_Ju.\\
			\left[\partial_{t},\Box\right]u=&-\sum_{I}\epsilon_{I}\left(D_{I}\hatTheta^{J}+\hatTheta^{K}\Gamma_{IK}^{J}\right)\left(D_{I}D_{J}u+D_{J}D_{I}u\right)-\Box\hatTheta^{J}D_{J}u\\
			&+\sum_{J}\epsilon_{J}\left(\Gamma_{IJ}^{K}-\Gamma_{JI}^{K}\right)D_{J}\hatTheta^{I}D_{K}u-\sum_{I}\epsilon_{I}\hatTheta^{K}D_{I}\Gamma_{IK}^{J}D_{J}u+\sum_{I}\epsilon_{I}\Gamma_{II}^{K}\Gamma_{KM}^{J}\hatTheta^{M}D_{J}u\\
			&-\hatTheta^{I}\textrm{Ric}^{K}{}_{I}D_{K}u-\sum_{J}\epsilon_{J}\hatTheta^{I}\Gamma^{K}_{MJ}\Gamma_{JI}^{M}D_{K}u+\sum_{\tilI}\hatTheta^{I}\Gamma_{M\tilI}\Gamma_{\tilI I}^{M}D_{K}u\\
			&-\sum_{J}\epsilon_{J}\hatTheta^{I}\left(\Gamma_{JM}^{K}\Gamma^{M}_{IJ}-\Gamma^{K}_{IM}\Gamma^{M}_{JJ}\right)D_{K}u\\
			[\partial_t,\partial_t^2+\gamma D_n]u=&\frac{\epsilon_I}{2\sigma^2}(D_I\partial_t\sigma^2)D_Iu-\frac{\partial_t\sigma^2}{\sigma^2}\gamma D_nu  -\frac{\epsilon_I}{2\sigma^2}(D_I\hatTheta^J+\Gamma_{IK}^J\hatTheta^K)(D_J\sigma^2)D_Iu\\
			&\phantom{[\partial_t,\partial_t^2+\gamma D_n]=}-\frac{\epsilon_I}{2\sigma^2}(D_I\hatTheta^J+\Gamma_{IK}^J\hatTheta^K)(D_I\sigma^2)D_Ju.
		\end{split}
	\end{align*}
\end{lemma}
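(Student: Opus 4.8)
\emph{Proof plan.} The plan is to establish the three identities in turn, with the first serving as the engine for the other two. For the commutator $[\partial_t, D_I]$, observe that on scalars $\partial_t$ acts as the vector field $\hatV=\hatTheta^J e_J$, so $[\partial_t, D_I]u=[\hatV,e_I](u)$ with $[\,\cdot\,,\,\cdot\,]$ the Lie bracket. Since the Levi-Civita connection is torsion free, $[\hatV,e_I]=\nabla_{\hatV}e_I-\nabla_{e_I}\hatV$, and $\nabla_{\hatV}e_I=0$ because the frame is parallel transported by $V$ (hence by $\hatV$, as $\hatV=V/\|V\|$). Expanding $\nabla_{e_I}\hatV=\nabla_{e_I}(\hatTheta^J e_J)=(D_I\hatTheta^J)e_J+\hatTheta^J\Gamma_{IJ}^K e_K$ and relabeling the summation index yields $[\partial_t,D_I]u=-(D_I\hatTheta^J+\hatTheta^K\Gamma_{IK}^J)D_Ju$, which is the first claimed identity.

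For the third identity, $\partial_t$ commutes with $\partial_t^2$, so $[\partial_t,\partial_t^2+\gamma D_n]u=[\partial_t,\gamma D_n]u$. I next rewrite $\gamma D_n$ in the frame: since $n=\frac1a\nabla\sigma^2$ and, for a scalar $f$, the orthonormality of $\{e_I\}$ gives $\nabla f=\sum_I\epsilon_I(D_If)e_I$, one has $D_n u=\frac1a\sum_I\epsilon_I(D_I\sigma^2)D_Iu$, hence by \eqref{eq:a1} and \eqref{eq:gamma1} $\gamma D_n u=\frac{1}{2\sigma^2}\sum_I\epsilon_I(D_I\sigma^2)D_Iu$. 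Applying $\partial_t$ and using $\partial_t\big(\tfrac{1}{2\sigma^2}\big)=-\tfrac{\partial_t\sigma^2}{\sigma^2}\cdot\tfrac{1}{2\sigma^2}$, the identity $\partial_tD_I\sigma^2=D_I\partial_t\sigma^2+[\partial_t,D_I]\sigma^2$, and the first identity to expand the two occurrences of $[\partial_t,D_\cdot]$, one reads off exactly the stated right-hand side; the term $\gamma D_n\partial_tu$ cancels between $\partial_t(\gamma D_n u)$ and $\gamma D_n(\partial_t u)$.

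The second identity is the substantial one. The starting point is the frame expression for the scalar wave operator, $\Box u=\sum_I\epsilon_I\big(D_ID_Iu-\Gamma_{II}^KD_Ku\big)$, from $\Box u=\sum_I\epsilon_I\mathrm{Hess}(u)(e_I,e_I)$ together with $\mathrm{Hess}(u)(e_I,e_J)=D_ID_Ju-\Gamma_{IJ}^KD_Ku$. Applying $\partial_t$, pushing it past each $D_I$ with the first identity (twice for the $D_ID_Iu$ term) and recording the terms in which $\partial_t$ falls on $\Gamma_{II}^K$, one obtains $[\partial_t,\Box]u=\sum_I\epsilon_I\big(-(D_Ic_I^J)D_Ju-c_I^J(D_ID_Ju+D_JD_Iu)-(\partial_t\Gamma_{II}^K)D_Ku+\Gamma_{II}^Kc_K^JD_Ju\big)$ with $c_I^J:=D_I\hatTheta^J+\hatTheta^K\Gamma_{IK}^J$. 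One then (i) expands $D_Ic_I^J$ and converts $\sum_I\epsilon_ID_ID_I\hatTheta^J=\Box\hatTheta^J+\sum_I\epsilon_I\Gamma_{II}^KD_K\hatTheta^J$, noting that the resulting $\Gamma_{II}^K(D_K\hatTheta^J)$ terms cancel those coming from $\Gamma_{II}^Kc_K^J$; and (ii) substitutes the transport equation \eqref{eq:Gammatransport1} for the spatial contributions $\partial_t\Gamma_{\tilI\tilI}^K$ and, separately, the transport equation in \rem{rem:transport Gamma0} (together with the algebraic relation \eqref{eq:Gamma01}) for the contribution $\partial_t\Gamma_{00}^K$, also using $\partial_t=\hatTheta^0D_0+\hatTheta^\tilI D_\tilI$. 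The curvature terms so produced are then assembled, using the symmetries of $R$ and the definition $R_{IJ}=\sum_K\epsilon_KR_{IKJK}$, into the single Ricci contraction $-\hatTheta^I\mathrm{Ric}^K{}_ID_Ku$, and the remaining $\Gamma\Gamma$, $D\Gamma$, and $\Gamma D\hatTheta$ terms collected into the stated right-hand side.

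The main obstacle is precisely step (ii) for the second identity: the sum $\sum_I\epsilon_I\partial_t\Gamma_{II}^K$ includes $I=0$, so the $\Gamma_{00}^K$ term must be handled via \rem{rem:transport Gamma0} rather than \eqref{eq:Gammatransport1}, and the curvature pieces from the $\tilI\in\{1,2,3\}$ transport equations only combine into a Ricci contraction once this $I=0$ term is added back in. Keeping the signs $\epsilon_I$ and the index relabelings consistent through this assembly is the delicate point; everything else is routine expansion driven by the first identity.
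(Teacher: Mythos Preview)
Your proposal is correct and follows essentially the same approach as the paper. The only minor difference is in the first identity: the paper computes $[\partial_t,D_I]u=(\partial_t e_I^\mu)\partial_\mu u$ directly from the transport equations \eqref{eq:etransport1} and \rem{rem:transport e0mu}, whereas you derive the same result more geometrically via torsion-freeness and $\nabla_{\hatV}e_I=0$; these are the same computation read in opposite directions, and your handling of the $\Box$ and $\gamma D_n$ commutators matches the paper's line by line, including the separate treatment of $\partial_t\Gamma_{00}^K$ via \rem{rem:transport Gamma0}.
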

\begin{proof}
	The commutator $[\partial_t,D_I]$ can be calculated using \eqref{eq:etransport1} and Remark \ref{rem:transport e0mu}.
	For the commutator with $\Box$, we use the representation
	\begin{align}\label{eq:Boxframe1}
		\begin{split}
			\Box u = \sum_{I}\epsilon_I D_ID_Iu-\sum_{I}\epsilon_I\Gamma^K_{II}D_Ku.
		\end{split}
	\end{align}
	Then the commutator in consideration is given by
		\begin{align*}
			\left[\partial_{t},\Box\right]u=&\sum_{I}\epsilon_{I}\left[\partial_{t},D_{I}\right]D_{I}u+\sum_{I}\epsilon_{I}D_{I}\left(\left[\partial_{t},D_{I}\right]u\right)-\sum_{I}\epsilon_{I}\partial_{t}\Gamma_{II}^{K} D_{K}u-\sum_{I}\epsilon_{I}\Gamma_{II}^{K}\left[\partial_{t},D_{K}\right]u.
		\end{align*}
		The third term on the right-hand side above is the most complicated and we compute it first:
		\begin{align*}
			-\sum_{I}\epsilon_{I}\partial_{t}\Gamma_{II}^{K} D_{K}u=&\partial_{t}\Gamma_{00}^{K}D_{K}u-\sum_{\tilI}\partial_{t}\Gamma_{\tilI\tilI}^{K}D_{K}u.
		\end{align*}
		For the first term we have
		\begin{align*}
			\partial_{t}\Gamma_{00}^{K}D_{K}u=&\hatTheta^{I}R^{K}{}_{0I0}D_{K}u+\hatTheta^{I}\left(\Gamma_{M0}^{K}\Gamma_{0I}^{M}+\Gamma^{K}_{0M}\Gamma^{M}_{I0}-\Gamma_{IM}^{K}\Gamma_{00}^{M}\right)D_{K}u-D_{0}\hatTheta^{I}\Gamma_{I0}^{K}D_{K}u.
		\end{align*}
		For the second term we have
		\begin{align*}
			-\sum_{\tilI}\partial_{t}\Gamma_{\tilI\tilI}^{K}D_{K}u=&-\sum_{\tilI}\hatTheta^{I}\left(R^{K}{}_{\tilI I\tilI}+\Gamma^{K}_{M\tilI}(\Gamma^{M}_{\tilI I}-\Gamma^{M}_{\tilI I})+\Gamma_{\tilI M}^{K}\Gamma^{M}_{I\tilI}-\Gamma^{K}_{IM}\Gamma^{M}_{\tilI\tilI}\right)D_{K}u+\sum_{\tilI}\Gamma_{I\tilI}^{K}D_{\tilI}\hatTheta^{I}D_{K}u
		\end{align*}
		Therefore we have
		\begin{align}\label{Commutator Box 1}
			\begin{split}
				-\sum_{I}\epsilon_{I}\partial_{t}\Gamma_{II}^{K} D_{K}u=&-\hatTheta^{I}\textrm{Ric}^{K}{}_{I}D_{K}u-\sum_{J}\epsilon_{J}\hatTheta^{I}\Gamma^{K}_{MJ}\Gamma_{JI}^{M}D_{K}u+\sum_{\tilI}\hatTheta^{I}\Gamma_{M\tilI}\Gamma_{\tilI I}^{M}D_{K}u\\
				&-\sum_{J}\epsilon_{J}\hatTheta^{I}\left(\Gamma_{JM}^{K}\Gamma^{M}_{IJ}-\Gamma^{K}_{IM}\Gamma^{M}_{JJ}\right)D_{K}u+\sum_{J}\epsilon_{J}\Gamma_{IJ}^{K}D_{J}\hatTheta^{I}D_{K}u.
			\end{split}
		\end{align}
		For the other terms in the commutator, we directly use the formula for the commutator $\left[\partial_{t},D_{I}\right]$:
		\begin{align}\label{Commutator Box 2}
			-\sum_{I}\epsilon_{I}\Gamma_{II}^{K}\left[\partial_{t},D_{K}\right]u=&\sum_{I}\epsilon_{I}\Gamma_{II}^{K}\left(D_{K}\hatTheta^{J}+\hatTheta^{M}\Gamma_{KM}^{J}\right)D_{J}u,
		\end{align}
		\begin{align}\label{Commutator Box 3}
			\begin{split}
				\sum_{I}\epsilon_{I}\left[\partial_{t},D_{I}\right]D_{I}u=&-\sum_{I}\epsilon_{I}\left(D_{I}\hatTheta^{J}+\hatTheta^{K}\Gamma_{IK}^{J}\right)D_{J}D_{I}u,
			\end{split}
		\end{align}
		and
		\begin{align}\label{Commutator Box 4}
			\begin{split}
				\sum_{I}\epsilon_{I}D_{I}\left(\left[\partial_{t},D_{I}\right]u\right)=&-\sum_{I}\epsilon_{I}D_{I}\left(\left(D_{I}\hatTheta^{J}+\hatTheta^{K}\Gamma_{IK}^{J}\right)D_{J}u\right)\\
				=&-\sum_{I}\epsilon_{I}\left(D_{I}D_{I}\hatTheta^{J}+D_{I}\hatTheta^{K}\Gamma^{J}_{IK}+\hatTheta^{K}D_{I}\Gamma_{IK}^{J}\right)D_{J}u\\
				&-\sum_{I}\epsilon_{I}\left(D_{I}\hatTheta^{J}+\hatTheta^{K}\Gamma_{IK}^{J}\right)D_{I}D_{J}u.
			\end{split}
		\end{align}
		Adding \eqref{Commutator Box 1}-\eqref{Commutator Box 4} we obtain
		\begin{align*}
			\left[\partial_{t},\Box\right]u=&-\sum_{I}\epsilon_{I}\left(D_{I}\hatTheta^{J}+\hatTheta^{K}\Gamma_{IK}^{J}\right)\left(D_{I}D_{J}u+D_{J}D_{I}u\right)-\Box\hatTheta^{J}D_{J}u\\
			&+\sum_{J}\epsilon_{J}\left(\Gamma_{IJ}^{K}-\Gamma_{JI}^{K}\right)D_{J}\hatTheta^{I}D_{K}u-\sum_{I}\epsilon_{I}\hatTheta^{K}D_{I}\Gamma_{IK}^{J}D_{J}u+\sum_{I}\epsilon_{I}\Gamma_{II}^{K}\Gamma_{KM}^{J}\hatTheta^{M}D_{J}u\\
			&-\hatTheta^{I}\textrm{Ric}^{K}{}_{I}D_{K}u-\sum_{J}\epsilon_{J}\hatTheta^{I}\Gamma^{K}_{MJ}\Gamma_{JI}^{M}D_{K}u+\sum_{\tilI}\hatTheta^{I}\Gamma_{M\tilI}\Gamma_{\tilI I}^{M}D_{K}u\\
			&-\sum_{J}\epsilon_{J}\hatTheta^{I}\left(\Gamma_{JM}^{K}\Gamma^{M}_{IJ}-\Gamma^{K}_{IM}\Gamma^{M}_{JJ}\right)D_{K}u,
		\end{align*}
		as desired.
	Similarly the commutator $[\partial_t,\gamma D_n]$ follows by writing $\gamma D_n=\frac{\epsilon_I}{2\sigma^2}(D_I\sigma^2)D_I$ and using the commutator identity for $[\partial_t,D_I]$.
\end{proof}
Applying Lemma~\ref{lem:commutators} yields higher order equations for the fluid variables which we record in the form of a few lemmas for future reference. 
\begin{lemma}\label{lem:Vinthigh}
	$\partial_t^k\Theta^I$ satisfies
	\begin{align*}
		\begin{split}
			\Box \partial_t^k\Theta^I= F,
		\end{split}
	\end{align*}
	in $\Omega$, where $F$ is a linear combination of contractions of $\Gamma$, $D\Gamma$, $\partial_t^{j_1}\sigma^2$, $\partial_t^{j_2}\Theta$, $D\partial_t^{j_3}\Theta$, $D^2\partial_t^{j_4}\Theta$, $\partial_t^{j_5}R$, and $D \partial_t^{j_6}R$, with $\sum j_i\leq k$, and $j_1,j_2, j_3\leq k$, $j_4,j_5,j_{6} \leq k-1$.
\end{lemma}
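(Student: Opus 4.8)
The statement is a purely algebraic identity in $\Omega$, and I would prove it by induction on $k$. The base case $k=0$ is exactly \eqref{eq:BoxTheta1}, whose right-hand side is a linear combination of $(\tfrac12-\sigma^2)\Theta^I$, $\Gamma\,D\Theta$, $\Gamma\Gamma\,\Theta$, $\Theta\,D\Gamma$, $\Theta\,\Gamma\Gamma$, i.e. contractions of the allowed quantities with $j_1=j_2=j_3=0$ and no $D^2\Theta$, $R$, or $DR$ terms (consistent with the vacuous constraints $j_4,j_5,j_6\le-1$). Write $F_k$ for an admissible right-hand side at level $k$. Throughout I would freely absorb into the coefficients of $F_k$ any smooth function of $\sigma^2$, of $e$, $De$, and of $\Theta$ (and, where needed, of the first spatial derivatives of $\sigma^2$), with no further derivatives; in particular $\hatTheta^I=\Theta^I/\sqrt{\sigma^2}$ and $D_I\hatTheta^J$ then count, up to such coefficients, as $\Theta$ and $D\Theta$, and $\Box\hatTheta^I$ — rewritten via \eqref{eq:BoxTheta1} and \eqref{eq:Sigma1} — counts as a contraction of $\Gamma,D\Gamma,\Theta,D\Theta,\sigma^2$ carrying no $\partial_t$-derivative.

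For the inductive step, assuming $\Box\partial_t^k\Theta^I=F_k$ in $\Omega$, I would apply $\partial_t$ and commute it past $\Box$:
\[
\Box\,\partial_t^{k+1}\Theta^I=\partial_t F_k-[\partial_t,\Box]\,\partial_t^k\Theta^I .
\]
By Lemma~\ref{lem:commutators}, every term of $[\partial_t,\Box]\partial_t^k\Theta^I$ is a product of a geometric factor carrying no $\partial_t$-derivative — one of $D\hatTheta$, $\hatTheta\Gamma$, $\Gamma D\hatTheta$, $\hatTheta D\Gamma$, $\Gamma\Gamma\hatTheta$, $\hatTheta\,\Ric$ (which inside $\Omega$ is $\hatTheta(\Theta\otimes\Theta+\tfrac12 g)$ by \eqref{eq:Einstein2}), $\hatTheta\Gamma\Gamma$, or $\Box\hatTheta$ — with $D^2\partial_t^k\Theta^I$ or $D\partial_t^k\Theta^I$; these are admissible, the first contributing $j_4=k=(k+1)-1$, with $\sum j_i=k$. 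For $\partial_t F_k$ I would use the Leibniz rule on each monomial. When $\partial_t$ lands on $\partial_t^{j_1}\sigma^2,\partial_t^{j_2}\Theta,\partial_t^{j_3}\Theta,\partial_t^{j_5}R$ it raises the corresponding count by one, still $\le k+1$ by the inductive bounds $j_1,j_2,j_3\le k$, $j_5\le k-1$. When it lands on $D\partial_t^{j}\Theta$, $D^2\partial_t^{j}\Theta$, or $D\partial_t^{j}R$ it produces the raised-index term plus a commutator $[\partial_t,D_I](\cdot)=-(D_I\hatTheta^J+\hatTheta^K\Gamma_{IK}^J)D_J(\cdot)$ (Lemma~\ref{lem:commutators}), which only lowers orders. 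When it lands on $\Gamma$, $D\Gamma$, or a coefficient depending on $e$, it is rewritten by the transport equations \eqref{eq:frame1}, \eqref{eq:connection1} (with Remarks~\ref{rem:transport e0mu}, \ref{rem:transport Gamma0} and the algebraic relations \eqref{eq:e0mu1}, \eqref{eq:Gamma01} for the $I=0$ components): $\partial_t\Gamma_{\tilI J}^K=\hatTheta^I(R^K{}_{JI\tilI}-\Gamma_{MJ}^K\Gamma_{\tilI I}^M)-\Gamma_{IJ}^K D_{\tilI}\hatTheta^I$ is a contraction of $R,\Gamma,\Theta,D\Theta$ with no $\partial_t$-derivative, and $\partial_t D\Gamma=D\partial_t\Gamma+[\partial_t,D]\Gamma$ likewise, now involving $DR$ and $R$ with no $\partial_t$-derivative. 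Hence every curvature factor produced carries no $\partial_t$-derivative and at most one $D$, so $j_5,j_6=0\le k$, the remaining time-derivative counts increase by at most one, and $\sum j_i\le k+1$; collecting terms gives $\Box\partial_t^{k+1}\Theta^I=F_{k+1}$ of the asserted form.

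The entire content is this bookkeeping, and the delicate point — the one I expect to be the main (albeit mild) obstacle — is the $-1$ in $j_4,j_5,j_6\le k-1$. It survives at each step precisely because: (i) the sole mechanism producing a second spatial derivative of $\Theta$ is $[\partial_t,\Box]$ acting on $\partial_t^k\Theta^I$, which lands on $D^2\partial_t^k\Theta^I$, exactly one time-derivative short of the ceiling $k+1$, while differentiating an existing $D^2\partial_t^{j_4}\Theta$ only raises $j_4$ from $\le k-1$ to $\le k$; and (ii) curvature enters $F_{k+1}$ only through the Ricci term of the commutator or through $\partial_t\Gamma$ / $\partial_tD\Gamma$ via the transport equations, always at time-derivative order $0$ and spatial order $\le 1$, while an already-present $\partial_t^{j_5}R$ or $D\partial_t^{j_6}R$ is differentiated from $j\le k-1$ to $j\le k$. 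One also checks that $\partial_t$ of the geometric and fluid coefficients produces only $\partial_t\Theta,\partial_t\sigma^2,\partial_t e$ at first order, hence cannot push $\sum j_i$ beyond $k+1$ since those coefficients carry $\sum j_i=0$. No estimate is involved; the lemma is a pure identity in $\Omega$, so there is no analytic difficulty, only the need to organize the terms so as to confirm that the curvature and second-spatial-derivative budgets are never saturated one step too early.
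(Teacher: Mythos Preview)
Your proof is correct and follows essentially the same approach as the paper, which simply states that the lemma follows by ``applying Lemma~\ref{lem:commutators}'' to the base equation \eqref{eq:BoxTheta1} without writing out the induction. Your careful bookkeeping of the indices $j_4,j_5,j_6\le k-1$ --- tracing the second spatial derivative of $\Theta$ to the commutator $[\partial_t,\Box]$ and the curvature terms to the transport equation \eqref{eq:Gammatransport1} for $\Gamma$ --- makes explicit exactly what the paper leaves to the reader.
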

The next lemma contains the equation for $\partial_t^k\Theta^I$ on $\partial\Omega$.
\begin{lemma}\label{lem:Vbdryhigh}
	$\partial_t^k\Theta^I$ satisfies
	\begin{align*}
		\begin{split}
			(\partial_t^2+\gamma D_n)\partial_t^k\Theta^I= f,
		\end{split}
	\end{align*}
	on $\partial\Omega$, where $f$ is a linear combination of $\Gamma$, $D \partial_t^{j_1+1}\sigma^2$, $D \partial_t^{j_2}\sigma^2$, $\partial_t^{j_3}\Theta$, $\partial_t^{j_4+1}\Theta$, $D\partial_t^{j_5}\Theta$, and $\partial_t^{j_6}R$ with $\sum j_i\leq k$, and $j_1,j_{2}\leq k$ and $j_3,j_4,j_5, j_6\leq k-1$.
\end{lemma}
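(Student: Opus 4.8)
The plan is to differentiate the boundary identity \eqref{eq:Thetabdry1}---which holds pointwise throughout $\Omega$, not only on $\partial\Omega$---by $\partial_t^k$ and then restrict to $\partial\Omega$, where $\sigma^2\equiv 1$ and, since $\partial_t$ is tangent to $\partial\Omega$, $\partial_t^j\sigma^2\vert_{\partial\Omega}=0$ for every $j\geq 1$; consequently $\partial_t^j(h(\sigma^2))\vert_{\partial\Omega}=0$ for $j\geq 1$ and any smooth $h$ (in particular for $h(\sigma^2)=\sigma^{-2}$, $\sigma^{-1}$, $\sqrt{\sigma^2}$, $\gamma$, $a$), while derivatives carrying a transverse component such as $D_n\sigma^2$ or $D_I\partial_t^j\sigma^2$ need not vanish. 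First write $\partial_t^k[(\partial_t^2+\gamma D_n)\Theta^I]=(\partial_t^2+\gamma D_n)\partial_t^k\Theta^I+[\partial_t^k,\partial_t^2+\gamma D_n]\Theta^I$, and expand the commutator through the operator identity $[\partial_t^k,L]=\sum_{j=0}^{k-1}\partial_t^{\,j}[\partial_t,L]\partial_t^{\,k-1-j}$ together with the formula for $[\partial_t,\partial_t^2+\gamma D_n]$ in \lem{lem:commutators}. This moves the entire commutator to the right-hand side, so it suffices to verify that, restricted to $\partial\Omega$, both this commutator applied to $\Theta^I$ and $\partial_t^k$ of the right-hand side of \eqref{eq:Thetabdry1} lie in the asserted class.

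For the right-hand side of \eqref{eq:Thetabdry1} one first uses $\hatV=\partial_t$, i.e. $D_V=\sqrt{\sigma^2}\,\partial_t$, to write $D_ID_V\sigma^2=\sqrt{\sigma^2}\,D_I\partial_t\sigma^2+\tfrac{1}{2}(\sigma^2)^{-1/2}(\partial_t\sigma^2)\,D_I\sigma^2$, a combination of a $D\partial_t\sigma^2$ term and a $D\sigma^2$ term. The last summand of \eqref{eq:Thetabdry1}, namely $-\tfrac{1}{2\sigma^2}(\partial_t\sigma^2)\partial_t\Theta^I$, together with the $D\sigma^2$ piece just produced, retains after any number of $\partial_t$'s an overall factor of the form $\partial_t^{\,\geq 1}\sigma^2$ (possibly hidden inside a coefficient $\partial_t^{\,\geq 1}(h(\sigma^2))$), and therefore contributes nothing on $\partial\Omega$. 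In the remaining terms one applies $\partial_t^k$ and distributes: each $\partial_t$ either lands on a smooth coefficient in $\sigma^2$, $\gamma$, $a$ or a frame component---producing a $\partial_t^{\,\geq 1}$ of such a quantity, killed on $\partial\Omega$ unless that factor is absent---or on one of the genuine factors $\Gamma$, $\Theta^J$, $D_K\sigma^2$, $D_I\partial_t\sigma^2$. One then commutes the surviving $\partial_t$'s past the $D_I$'s and $D_n$ via $[\partial_t,D_I]u=-(D_I\hatTheta^J+\hatTheta^K\Gamma_{IK}^J)D_Ju$ from \lem{lem:commutators} and \rem{rem:transport e0mu}, and expands $D\hatTheta^J$ through $\hatTheta^J=\Theta^J/\sqrt{\sigma^2}$ so that on $\partial\Omega$ it becomes a combination of $D\partial_t^{\,j}\Theta$, $\partial_t^{\,j}\Theta$, $D\partial_t^{\,j}\sigma^2$ type factors times bounded coefficients. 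Collecting, every resulting summand is a product of a bounded coefficient with factors from the list $\Gamma$, $D\partial_t^{\,j_1+1}\sigma^2$, $D\partial_t^{\,j_2}\sigma^2$, $\partial_t^{\,j_3}\Theta$, $\partial_t^{\,j_4+1}\Theta$, $D\partial_t^{\,j_5}\Theta$.

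Curvature enters only through the connection coefficients: whenever one of the $\partial_t$'s falls on a factor $\Gamma$ we invoke the transport equations \eqref{eq:Gammatransport1} and \eqref{eq:Gamma01} (and \rem{rem:transport Gamma0}), which trade $\partial_t\Gamma$ for $\hatTheta\,R+\Gamma\Gamma+\Gamma\,D\hatTheta$; iterating, a factor $\partial_t^{\,m}\Gamma$ produces $R$ at order at most $m-1$, which is the origin of the $\partial_t^{\,j_6}R$ terms, and since no spatial derivative ever lands on $\Gamma$ in the boundary equation none lands on $R$ either (this is exactly why we work with the frame components $\Theta^I$). For the index counts, in every summand the orders of the differentiated factors add up to at most $k$. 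Since the right-hand side of \eqref{eq:Thetabdry1} already carries one $\partial_t$ on $\sigma^2$ inside $D_V\sigma^2$, $\sigma^2$ can be differentiated to total order $k+1$, giving $D\partial_t^{\,j_1+1}\sigma^2$ with $j_1\leq k$ (and $D\partial_t^{\,j_2}\sigma^2$ with $j_2\leq k$ from the $D\sigma^2$ piece and from $D\hatTheta$); whereas a $\partial_t$ that converts a $\Gamma$ into an $R$ or generates a $D_I$-commutator is ``spent,'' so $\Theta$ reaches order $k$ only as $\partial_t^{\,j_4+1}\Theta$ with $j_4\leq k-1$, and $\partial_t^{\,j_3}\Theta$, $D\partial_t^{\,j_5}\Theta$, $\partial_t^{\,j_6}R$ appear with $j_3,j_5,j_6\leq k-1$.

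The main obstacle is precisely this bookkeeping, carried out uniformly over the many summands generated by iterating the two commutator identities: one must check that (i) no term escapes the allowed list, in particular that every potential undifferentiated power $\partial_t^{\,j}\sigma^2$ ($j\geq 1$) and every coefficient derivative $\partial_t^{\,j}(h(\sigma^2))$ ($j\geq 1$) is annihilated on $\partial\Omega$; (ii) the coupling between $\partial_t$ and the spatial frame derivatives, whose commutator involves the unknowns $\hatTheta$ and $\Gamma$, never raises the spatial order beyond a single $D$; and (iii) the constraints $\sum j_i\leq k$, $j_1,j_2\leq k$, $j_3,j_4,j_5,j_6\leq k-1$ are preserved by each application of the transport equations for $e$ and $\Gamma$. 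Granting \lem{lem:commutators} and the transport equations \eqref{eq:etransport1}, \eqref{eq:Gammatransport1}, \eqref{eq:Gamma01}, the verification is routine and entirely parallel to the proof of \lem{lem:Vinthigh}, the only new feature being the simplifications afforded by $\sigma^2\equiv 1$ on $\partial\Omega$.
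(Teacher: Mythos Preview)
Your approach is correct and is precisely what the paper does: the paper states this lemma (together with \lem{lem:Vinthigh}, \lem{lem:sigmahigh}, \lem{lem:Whigh}) without proof, prefacing the group only with the remark that ``Applying Lemma~\ref{lem:commutators} yields higher order equations for the fluid variables which we record in the form of a few lemmas for future reference.'' Your argument---commuting $\partial_t^k$ with \eqref{eq:Thetabdry1} via the iterated commutator identity and the formula for $[\partial_t,\partial_t^2+\gamma D_n]$ in \lem{lem:commutators}, invoking the transport equations \eqref{eq:Gammatransport1}, \eqref{eq:Gamma01} to convert $\partial_t^m\Gamma$ into curvature, and exploiting $\partial_t^j\sigma^2\vert_{\partial\Omega}=0$ for $j\geq 1$ to discard undifferentiated factors of $\partial_t\sigma^2$---is exactly the intended computation, only spelled out in more detail than the paper provides.
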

Next we record the equation for $\partial_t^{k+1}\sigma^2$.
\begin{lemma}\label{lem:sigmahigh}
	$\partial_t^{k+1}\sigma^2$ satisfies
	\begin{align*}
		\begin{split}
			\Box \partial_t^{k+1}\sigma^2=H,
		\end{split}
	\end{align*}
	in $\Omega$, where $H$ is a linear combination of contractions of $\Gamma$, $D\Gamma$, $\partial_t^{j_1}\sigma^2$, $\partial_t^{j_2}D_V\sigma^2$, $D\partial_t^{j_3}\sigma^2$, $D\partial_t^{j_4+1}\sigma^2$, $D^2\partial_t^{j_5}\sigma^2$, $\partial_t^{j_6}\Theta$, $D\partial_t^{j_7}\Theta$, $\partial_t^{j_8}R$, $D^2\partial_t^{j_9}\Theta$, $D\partial_t^{j_{10}}R$, with $\sum j_i\leq k$, and $j_1,\dots,j_8\leq k$ and $j_9, j_{10}\leq k-1$. 
\end{lemma}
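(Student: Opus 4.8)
The plan is to prove \lem{lem:sigmahigh} by induction on $k$, starting from equation \eqref{eq:Lambda1}, which is precisely the case $k=0$ once the following bookkeeping conventions are fixed. Throughout, coefficients are allowed to be smooth functions of $\sigma^2$ (bounded away from its singular value since $\sigma^2>1$ in $\Omega$) and of the frame components $e_I^\mu$ (close to their flat values during the local evolution), and a factor $\partial_t\sigma^2$ is always rewritten as $\frac{1}{\sqrt{\sigma^2}}D_V\sigma^2$ and counted as $\partial_t^0D_V\sigma^2$. With these conventions one reads off directly from \eqref{eq:Lambda1} that every term is a contraction of the listed building blocks with all indices $j_i=0$: the second derivative $D_ID_J\sigma^2$ is of the form $D^2\partial_t^{j_5}\sigma^2$, the curvature term $R_{LIJK}$ is of the form $\partial_t^{j_8}R$, and no $D^2\Theta$ or $DR$ appears, consistently with the (vacuous) constraint $j_9,j_{10}\leq -1$.

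For the inductive step, assume the claim holds for $\partial_t^m\sigma^2$ and write
\begin{align*}
\Box\partial_t^{m+1}\sigma^2=\partial_t\big(\Box\partial_t^m\sigma^2\big)-[\partial_t,\Box]\partial_t^m\sigma^2 .
\end{align*}
In the first term we differentiate the inductive expression for $\Box\partial_t^m\sigma^2$ term by term: when $\partial_t$ falls on a coefficient we use $\partial_t\sigma^2=\frac{1}{\sqrt{\sigma^2}}D_V\sigma^2$ and \eqref{eq:etransport1}, \eqref{eq:e0mu1}; when it falls on $\Gamma$ or $D\Gamma$ we commute it past any spatial derivative using the identity $[\partial_t,D_I]u=-(D_I\hatTheta^J+\hatTheta^K\Gamma_{IK}^J)D_Ju$ from \lem{lem:commutators} and then eliminate $\partial_t\Gamma$ via \eqref{eq:Gammatransport1}, \eqref{eq:Gamma01}, which replaces it by a combination of $\hatTheta R$, $\Gamma D\hatTheta$ and $\hatTheta\Gamma\Gamma$; and $\partial_t$ of $\sigma^2$, $D_V\sigma^2$, $\Theta$, or $R$ is a plain increment of the corresponding $\partial_t$-order. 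Since $\hatTheta^J=\frac{1}{\sqrt{\sigma^2}}\Theta^J$, the quantity $D^2\hatTheta$ contributes $D^2\Theta$ and $D^2\sigma^2$ (plus lower order), and this is the only route to a $D^2\partial_t^{j_9}\Theta$ term; likewise $DR$ arises only when $\partial_t$ acts on a $D\Gamma$ factor, through $D(\partial_t\Gamma)\ni \hatTheta\,DR$. Because $D\Gamma$ never stores a time derivative in the reduced form, both $D^2\Theta$ and $DR$ are "born" only by spending one $\partial_t$, so they carry one fewer time derivative than the worst $\sigma^2$-terms. For the commutator term we insert the formula for $[\partial_t,\Box]$ from \lem{lem:commutators}, expanding $\Box\hatTheta^J$ by \eqref{eq:BoxTheta1} together with $\hatTheta=\frac{1}{\sqrt{\sigma^2}}\Theta$, and $\Box\sigma^2$ by \eqref{eq:Sigma1} (this is where the $D\Gamma$ building block enters), and using $\textrm{Ric}=R$; every resulting term is $D\partial_t^m\sigma^2$ or $D^2\partial_t^m\sigma^2$ multiplied by an expression built from $\Gamma,D\Gamma,\Theta,D\Theta,D\sigma^2,D^2\sigma^2,D^2\Theta,R$ carrying no time derivatives, hence lies in the stated classes.

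It remains to verify the index bounds, which is the main bookkeeping burden. One checks that each application of $\partial_t$ raises $\sum j_i$ by at most one (the transport substitutions and the $[\partial_t,D_I]$ commutator do not raise it); that $\partial_t$ acting on the reduced form never generates spatial derivatives beyond $D^2\partial_t^{\leq m+1}\sigma^2$, $D^2\partial_t^{\leq m}\Theta$, $D\partial_t^{\leq m}R$, $D\Gamma$, $D\Theta$, and $\partial_t^{\leq m+1}R$ — in particular no $D^3$, no $D^2R$, and no $D^2\Gamma$ ever occur; and that the $D^2\Theta$ and $DR$ terms always come with at least one fewer $\partial_t$ than the $\sigma^2$-terms, as explained above. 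This gives $\sum j_i\leq m+1$, $j_1,\dots,j_8\leq m+1$, and $j_9,j_{10}\leq m$ for $\partial_t^{m+2}\sigma^2$, closing the induction. The only real obstacle is this exhaustive tracking of derivative orders; all the analytic input is already contained in \eqref{eq:Lambda1}, \eqref{eq:Sigma1}, \eqref{eq:BoxTheta1}, the transport equations \eqref{eq:etransport1}, \eqref{eq:Gammatransport1}, \eqref{eq:e0mu1}, \eqref{eq:Gamma01}, and the commutator identities of \lem{lem:commutators}.
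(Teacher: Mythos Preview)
Your approach is correct and is precisely what the paper intends: the paper does not give a separate proof of \lem{lem:sigmahigh} but simply states that it (and the companion lemmas) follows from applying the commutator identities of \lem{lem:commutators} to the base equations, which is exactly the induction you carry out starting from \eqref{eq:Lambda1}. One small slip to fix: your final summary of the bounds is off by one relative to your own setup --- having assumed the claim for $\partial_t^{m}\sigma^2$ (which is the case $k=m-1$) and computed $\Box\partial_t^{m+1}\sigma^2$, you should conclude $\sum j_i\le m$, $j_1,\dots,j_8\le m$, $j_9,j_{10}\le m-1$ for $\partial_t^{m+1}\sigma^2$, not the $m+1,m+1,m$ bounds for $\partial_t^{m+2}\sigma^2$ that you wrote; the argument itself already delivers the correct bounds.
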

Finally, the higher order equations for $W$ can be calculated using \eqref{eq:XAI1}, \eqref{eq:calIint1}, \eqref{eq:calIext1} \eqref{eq:calJ1}, and \eqref{eq:W2} and are given in the next lemma.
\begin{lemma}\label{lem:Whigh}
	$\partial_t^kW$ satisfies
	\begin{align*}
		\begin{split}
			\sum_{\mu=0}^3 B^\mu \partial_\mu \partial_t^kW = \calH,
		\end{split}
	\end{align*}
	where $\calH$ is a linear combination of contractions of $\Gamma$, $D\Gamma$, $e$, $D\partial_t^{j_1}\sigma^2$, $\partial_t^{j_2}\Theta$, $D\partial_t^{j_3}\Theta$, $\partial_t^{j_4}R$, $D^2\partial_t^{j_5}\sigma^2$, $D\partial_t^{j_6}W$,  with $\sum j_i\leq k$, and $j_1,\dots,j_5\leq k$ and $j_6\leq k-1$.
\end{lemma}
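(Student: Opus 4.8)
The plan is to differentiate the weakly-defined symmetric hyperbolic system \eqref{eq:W2} directly in $\partial_t^k$. Since the coefficients $\calB^\mu$ are built only from the frame components $e_I^\mu$ and the constant matrices $\calA^\tilI$, applying $\partial_t^k$ to $\sum_\mu\calB^\mu\partial_\mu W=\calK$ and using $\partial_t^k\partial_\mu=\partial_\mu\partial_t^k$ gives
\[
\sum_{\mu=0}^3\calB^\mu\partial_\mu(\partial_t^kW)=\partial_t^k\calK-\sum_{\mu=0}^3\sum_{m=1}^k\binom km(\partial_t^m\calB^\mu)\,\partial_\mu\partial_t^{k-m}W=:\calH .
\]
Thus the principal part is unchanged (in particular the system remains symmetric hyperbolic with the positivity of the $\partial_t$-coefficient as in \eqref{eq:W2}), and the whole content of the lemma is the identification of the structure of $\calH$.

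First I would treat the commutator sum. Each $\partial_t^m\calB^\mu$ with $m\geq1$ is a linear combination of $\partial_t^me_I^\mu$, and iterating the frame transport equations \eqref{eq:e0mu1}, \eqref{eq:etransport1} and Remark~\ref{rem:transport e0mu} together with \eqref{eq:Gamma01}, \eqref{eq:Gammatransport1} expresses $\partial_t^me_I^\mu$ as a polynomial in $e$, $\Gamma$, $D\Gamma$ and in $\partial_t^j\Theta$, $D\partial_t^j\Theta$, $D\partial_t^j\sigma^2$, $\partial_t^jR$ whose total $\partial_t$-count is at most $m$; here one uses $\hatTheta^I=\Theta^I/\sqrt{\sigma^2}$ and the fact that \eqref{eq:Gammatransport1} trades one $\partial_t$ on $\Gamma$ for a single curvature factor, so $R$ never picks up more than $m-2\le k$ time derivatives from this step. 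Multiplying by $\partial_\mu\partial_t^{k-m}W$: when $\mu$ is spatial this produces $D\partial_t^{k-m}W$ with $k-m\leq k-1$, as allowed; when $\mu=0$ I would first use \eqref{eq:W1} and the invertibility of the coefficient of $\partial_t$ there (valid on a short time interval because the $e_\tilI^0$ vanish initially and stay small by continuity) to rewrite $\partial_t^{k-m+1}W$ in terms of lower-order $\partial_t^j\calK$ and spatial derivatives $D\partial_t^jW$ with $j\leq k-1$, reducing to the previous case.

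For the main term $\partial_t^k\calK$ I would use the explicit expressions \eqref{eq:calIint1}, \eqref{eq:calIext1}, \eqref{eq:calJ1} and the algebraic passage between $\calK$ and $(\calI,\calJ^\ast)$: $\calK$ is a polynomial in $\Gamma$, $\Theta$, $D\Theta$, $R$, $W=F$ and the frame components $X_A^I$, $D_JX_A^I$. Applying the Leibniz rule and disposing of $\partial_t$-derivatives of $\Gamma$ and $e$ as above yields, from the $\Gamma F$, $\Theta D\Theta$ and $R\,\Gamma\,XX$ type terms, exactly the contractions listed in the lemma, with $\partial_t^jR$, $\partial_t^j\Theta$, $D\partial_t^j\Theta$ at order $\leq k$ and $\partial_t^jW$ at order $\leq k$ (the order-$k$ copy of $W$ being reduced via \eqref{eq:W1} as above, or viewed as an $R$-class term since $F$ is a contracted curvature component). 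The genuinely delicate factors are $X_A^I$ and $D_JX_A^I$: by \eqref{eq:XAI1} and the Gram--Schmidt formulas, $X_A^I$ is an explicit algebraic function of the $D_I\sigma^2$ and of the metric entries $g_{\mu\nu}=m_{IJ}e_I^\mu e_J^\nu$, so $D_JX_A^I$ involves $D^2\sigma^2$ and $De$, and $\partial_t^j(DX_A)$ involves $D^2\partial_t^{j'}\sigma^2$ with $j'\leq j$; this is precisely the source of the $D^2\partial_t^{j_5}\sigma^2$ term (with $j_5\leq k$) and of the $D\partial_t^{j_1}\sigma^2$ term. Summing over all factorizations and noting that the individual $\partial_t$-counts add up to at most $k$ completes the proof.

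\textbf{Main obstacle.} The only real work is the bookkeeping just described: one must check that differentiating the algebraic formula for $X_A^I$ never forces more than two $D$-derivatives, nor more than $k$ time derivatives, onto $\sigma^2$; that repeated use of \eqref{eq:etransport1} and \eqref{eq:Gammatransport1} keeps the curvature at $\partial_t$-order $\leq k$; and that the $\partial_0$-component of the commutator is correctly re-absorbed through \eqref{eq:W1}. Everything else is a direct Leibniz expansion with the same principal operator $\sum_\mu\calB^\mu\partial_\mu$.
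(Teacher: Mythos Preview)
Your approach is correct and is essentially what the paper has in mind: the paper itself does not give a detailed proof, simply stating that the higher order equations ``can be calculated using \eqref{eq:XAI1}, \eqref{eq:calIint1}, \eqref{eq:calIext1}, \eqref{eq:calJ1}, and \eqref{eq:W2},'' and your proposal is exactly that calculation carried out. The decomposition into the commutator sum and $\partial_t^k\calK$, the use of the transport equations \eqref{eq:etransport1}, \eqref{eq:Gammatransport1} to absorb time derivatives of $e$ and $\Gamma$ into the listed building blocks, and the identification of the $D^2\partial_t^{j_5}\sigma^2$ contribution through $D_JX_A^I$ via \eqref{eq:XAI1} are all correct and match the structure claimed.

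One small simplification: for the $\mu=0$ part of the commutator you do not need to invoke the invertibility of $\calB^0$ and re-substitute the equation. Since $m\geq 1$, the factor $\partial_\mu\partial_t^{k-m}W=\partial_t^{k-m+1}W$ has at most $k$ time derivatives, and because $W=F^{AB}$ is algebraically $R_{IJKL}X_A^KX_B^L$, this is directly a combination of $\partial_t^{j_4}R$ with $j_4\leq k$ and $D\partial_t^{j_1}\sigma^2$ with $j_1\leq k$ (from differentiating $X_A$), both of which are already in the allowed list. You note this alternative yourself; it is the cleaner route and avoids the self-reference issue when $m=1$.
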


\subsection{Energy identities}\label{subsec:energy}
In the next few lemmas we prove the basic energy estimates.
\begin{lemma}\label{lem:Venergy1}
	Suppose $u$ satisfies
	\begin{align*}
		\begin{cases}
			\Box u = F\qquad &\mathrm{in~}\Omega\\
			(\partial_t^2+\gamma D_n)u= f\qquad&\mathrm{on~}\partial\Omega
		\end{cases}.
	\end{align*}
	Then
	\begin{align}\label{eq:Venergy1}
		\begin{split}
			&\int_{\Omega_T} (-(g^{-1})^{0\nu}\partial_tu\partial_\nu u +\frac{1}{2}(g^{-1})^{\mu\nu}\partial_\mu u \partial_\nu u) \sqrt{|g|}\ud x +\frac{1}{2}\int_{\partial\Omega_T}\gamma^{-1}(\partial_tu)^{2}\sqrt{|g|}\ud S\\
			&=\int_{\Omega_0} (-(g^{-1})^{0\nu}\partial_tu\partial_\nu u +\frac{1}{2}(g^{-1})^{\mu\nu}\partial_\mu u \partial_\nu u) \sqrt{|g|}\ud x +\frac{1}{2}\int_{\partial\Omega_0}\gamma^{-1}(\partial_tu)^{2}\sqrt{|g|}\ud S\\
			&\quad-\int_0^T\int_{\Omega_t}F \partial_t u\sqrt{|g|}\ud x \ud t+\int_0^T\int_{\partial\Omega_t}\gamma^{-1}f\partial_t u \sqrt{|g|} \ud S \ud t\\
			&\quad+\frac{1}{2}\int_0^T\int_{\Omega_t}(\partial_t(\sqrt{|g|}(g^{-1})^{\mu\nu}))\partial_\mu u \partial_\nu u\,\ud x \ud t+\frac{1}{2}\int_0^T\int_{\partial\Omega_t}(\partial_t(\gamma^{-1}\sqrt{|g|}))(\partial_tu)^2\ud S \ud t.
		\end{split}
	\end{align}
\end{lemma}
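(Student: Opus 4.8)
The plan is to derive the identity \eqref{eq:Venergy1} by the standard energy method for the wave operator $\Box$, using $\partial_t u$ as the multiplier in the interior and exploiting the boundary equation $(\partial_t^2+\gamma D_n)u=f$ to produce the boundary term $\tfrac12\int \gamma^{-1}(\partial_tu)^2$. More precisely, I would write $\Box u = \frac{1}{\sqrt{|g|}}\partial_\mu(\sqrt{|g|}(g^{-1})^{\mu\nu}\partial_\nu u)$ and multiply the interior equation $\Box u = F$ by $\partial_t u\,\sqrt{|g|}$, so that
\begin{align*}
\partial_\mu\big(\sqrt{|g|}(g^{-1})^{\mu\nu}\partial_\nu u\big)\,\partial_t u = F\,\partial_t u\,\sqrt{|g|}\qquad \text{in }\Omega.
\end{align*}
Then I would integrate over $\Omega_0^T$ and use the divergence theorem. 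The key manipulation is the Leibniz-rule rewriting
\begin{align*}
\partial_\mu\big(\sqrt{|g|}(g^{-1})^{\mu\nu}\partial_\nu u\big)\partial_t u
= \partial_\mu\big(\sqrt{|g|}(g^{-1})^{\mu\nu}\partial_\nu u\,\partial_t u\big) - \sqrt{|g|}(g^{-1})^{\mu\nu}\partial_\nu u\,\partial_\mu\partial_t u,
\end{align*}
and for the last term one symmetrizes in $\mu,\nu$ and extracts a total $\partial_t$ derivative: $\sqrt{|g|}(g^{-1})^{\mu\nu}\partial_\nu u\,\partial_\mu\partial_t u = \tfrac12\partial_t\big(\sqrt{|g|}(g^{-1})^{\mu\nu}\partial_\mu u\,\partial_\nu u\big) - \tfrac12\big(\partial_t(\sqrt{|g|}(g^{-1})^{\mu\nu})\big)\partial_\mu u\,\partial_\nu u$. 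This is precisely where the two ``error'' terms on the last line of \eqref{eq:Venergy1} (the ones involving $\partial_t(\sqrt{|g|}(g^{-1})^{\mu\nu})$ and $\partial_t(\gamma^{-1}\sqrt{|g|})$) come from.

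Carrying this out, the divergence theorem applied on the spacetime region $\Omega_0^T = [0,T]\times\Omega_0$ produces three boundary contributions: the top cap $\Omega_T$, the bottom cap $\Omega_0$, and the lateral boundary $\partial\Omega_0^T$. On the caps $\{x^0=t\}$ the outward conormal is $\pm dx^0$, giving the bulk terms $\int_{\Omega_t}\big(-(g^{-1})^{0\nu}\partial_t u\,\partial_\nu u + \tfrac12(g^{-1})^{\mu\nu}\partial_\mu u\,\partial_\nu u\big)\sqrt{|g|}\,\ud x$ at $t=T$ and $t=0$ (the sign and the factor $\tfrac12$ are exactly what the combination $\mu=0$ term minus the symmetrized half-term yields). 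On the lateral boundary $\partial\Omega_0^T$, the flux term is $\int_{\partial\Omega_0^T}\big(\sqrt{|g|}(g^{-1})^{\mu\nu}\nu_\mu\partial_\nu u\big)\partial_t u$ where $\nu_\mu$ is the spatial outward conormal; this is, up to a positive factor, $D_n u\,\partial_t u$ integrated against the induced measure. Here I use that $\sigma^2$ is constant on $\partial\Omega$ so the gradient $\nabla\sigma^2$, hence $n$, is normal to the boundary, and I rewrite the boundary flux so that the coefficient identifies with $\gamma^{-1}$ times the measure $\sqrt{|g|}\,\ud S$ — this requires relating the induced surface measure on $\partial\Omega_0^T$ inside the Lagrangian coordinates to $\ud S$ and absorbing the normalization of $n$ into $\gamma = a/(2\sigma^2)$ from \eqref{eq:gamma1}. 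Then the boundary equation $(\partial_t^2+\gamma D_n)u = f$ lets me substitute $\gamma D_n u = f - \partial_t^2 u$, and the term $-\gamma^{-1}\partial_t^2 u\,\partial_t u = -\tfrac12\partial_t(\gamma^{-1}(\partial_t u)^2) + \tfrac12\big(\partial_t\gamma^{-1}\big)(\partial_t u)^2$ contributes the boundary cap terms $\tfrac12\int_{\partial\Omega_t}\gamma^{-1}(\partial_t u)^2$ at $t=T,0$ (after again splitting off the measure factor so that the full time derivative hits $\gamma^{-1}\sqrt{|g|}$, leaving the remaining $\tfrac12\int\int (\partial_t(\gamma^{-1}\sqrt{|g|}))(\partial_t u)^2$ error term), together with the source term $\int\int \gamma^{-1} f\,\partial_t u\,\sqrt{|g|}\,\ud S\,\ud t$. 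Collecting all pieces and moving the cap terms at $t=0$ and the source/error terms to the right-hand side gives \eqref{eq:Venergy1}.

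The step I expect to be the main obstacle — or at least the one requiring the most care — is the bookkeeping on the lateral boundary $\partial\Omega_0^T$: correctly identifying the conormal flux $\sqrt{|g|}(g^{-1})^{\mu\nu}\nu_\mu\partial_\nu u$ with a positive multiple of $D_n u$ and then matching that multiple, together with the induced versus ambient volume element, with exactly $\gamma^{-1}\sqrt{|g|}\,\ud S$. This hinges on the geometric fact that $n$ is the unit normal to $\partial\Omega$ (which holds because $p = \tfrac12(\sigma^2-1)$ vanishes on the boundary, so $\sigma^2$ is constant there) and on the precise definition $\gamma = a/(2\sigma^2)$ with $a = \sqrt{\nabla_\mu\sigma^2\nabla^\mu\sigma^2}$. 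One must also check there is no additional lateral contribution hidden in the $\partial_i$ components after the symmetrization — i.e. that the only surviving boundary flux is the normal-derivative one — which is automatic since the flux is contracted with the conormal, which is purely normal on $\partial\Omega$. Everything else is routine integration by parts; the error terms on the final line are simply whatever fails to be a total time derivative when $\sqrt{|g|}$, $(g^{-1})^{\mu\nu}$, and $\gamma^{-1}$ are themselves time-dependent, and no cancellation or sign condition is needed for the identity itself (the Taylor sign condition \eqref{eq:Taylor1}, i.e. positivity of $\gamma$, enters only when this identity is later used to extract an energy estimate).
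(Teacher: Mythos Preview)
Your proposal is correct and follows essentially the same route as the paper's proof: multiply the interior equation by $\partial_t u\sqrt{|g|}$, use the Leibniz/symmetrization manipulation to extract a total divergence plus a total $\partial_t$-derivative plus the coefficient-variation error, and then handle the lateral boundary flux using the boundary equation. The paper organizes this slightly more compactly by writing down two separate pointwise identities---one from the interior equation multiplied by $\partial_t u\sqrt{|g|}$, and one from the boundary equation multiplied by $\gamma^{-1}\partial_t u\sqrt{|g|}$---so that the term $D_n u\,\partial_t u\,\sqrt{|g|}$ appears explicitly in the boundary identity (as $\gamma^{-1}\cdot\gamma D_n u\,\partial_t u\,\sqrt{|g|}$) and cancels against the lateral flux from the interior identity upon integration; this sidesteps the conormal/measure normalization you flag as the delicate step, since both occurrences carry the same measure by construction.
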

\begin{proof}
	Multiplying the interior and boundary equations by $\partial_tu\sqrt{|g|}$ and $\gamma^{-1}\partial_t u\sqrt{|g|}$, respectively, we get
	\begin{align*}
		\begin{split}
			-F\partial_tu = -\partial_\mu ((g^{-1})^{\mu\nu}\sqrt{|g|}\partial_\nu u \partial_t u)+\frac{1}{2}\partial_t((g^{-1})^{\mu\nu}\sqrt{|g|}\partial_\mu  u \partial_\nu u)-\frac{1}{2}(\partial_t((g^{-1})^{\mu\nu}\sqrt{|g|}))\partial_\mu u \partial_\nu u,
		\end{split}
	\end{align*}
	and
	\begin{align*}
		\begin{split}
			\gamma^{-1}f\partial_tu\sqrt{|g|} = \partial_{t}\left(\frac{1}{2\gamma}(\partial_tu)^2\sqrt{|g|}\right)+D_nu \partial_t u\sqrt{|g|} -\frac{1}{2}(\partial_t(\gamma^{-1}\sqrt{|g|}))(\partial_tu)^2.
		\end{split}
	\end{align*}
	The desired result follows from integrating these identities. 
\end{proof}
The following lemma will be used to control one derivative of next to top order $\partial_t$ derivatives of $\Theta$ on the boundary. It will be used when $D_nu$ and $\partial_tu$ can already be controlled, so the term $\alpha (g^{-1})^{\alpha\beta}\partial_\alpha u \partial_\beta u$  on $\partial\Omega$ gives control of other tangential derivatives in terms of $\partial_tu$ and $D_nu$.
\begin{lemma}\label{lem:nablaVGamma}
	Suppose $u$ satisfies
	\begin{align*}
		\begin{split}
			\Box u = F,\qquad \mathrm{in~}\Omega.
		\end{split}
	\end{align*}
	Then there is a future directed timelike vectorfield $Q=\partial_t+\alpha n$, for some constant $\alpha>0$, such that 
	\begin{align}\label{eq:nablaVGamma1}
		\begin{split}
			&\int_{\Omega_T}(-Qu (g^{-1})^{0\alpha}\partial_\alpha u +\frac{Q^0}{2}(g^{-1})^{\alpha\beta}\partial_\alpha u \partial_\beta u) \sqrt{|g|}\ud x- \int_{0}^T\int_{\partial\Omega_t}(Qu D_nu -\frac{\alpha}{2}(g^{-1})^{\alpha\beta}\partial_\alpha u \partial_\beta u) \sqrt{|g|}\,\ud S \ud t\\
			&=\int_{\Omega_0}(-Qu (g^{-1})^{0\alpha}\partial_\alpha u +\frac{Q^0}{2}(g^{-1})^{\alpha\beta}\partial_\alpha u \partial_\beta u) \sqrt{|g|}\ud x-\int_0^T\int_{\Omega_t}F Qu\sqrt{|g|}\ud x \ud t\\
			&\quad-\int_0^T\int_{\Omega_t}\Big((g^{-1})^{\alpha\beta}(\partial_\alpha Q^\mu)\partial_\mu u \partial_\beta u -\frac{1}{2}\partial_\mu(\sqrt{|g|}(g^{-1})^{\alpha\beta}Q^\mu)\partial_\alpha u \partial_\beta u\Big)\ud x \ud t.
		\end{split}
	\end{align}
\end{lemma}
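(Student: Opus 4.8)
The plan is to run the stress--energy (multiplier) argument for the wave operator $\Box$ of $g$, in the spirit of the proof of Lemma~\ref{lem:Venergy1}, but with the timelike field $Q$ as multiplier rather than $\partial_t$ and \emph{without} using any boundary equation for $u$, so that the flux through the lateral boundary $\partial\Omega$ is retained. First I would fix the multiplier. Since $\hatV=\partial_t$ satisfies $g(\partial_t,\partial_t)=-1$, since $n=\frac{\nabla\sigma^2}{\|\nabla\sigma^2\|}$ is the exterior unit normal of $\partial\Omega$ with $g(n,n)=1$ (well defined and spacelike on $\Omega$ by the Taylor sign condition \eqref{eq:Taylor1}, which holds on $\Omega$ under the bootstrap assumptions), and since $\partial_t$ is tangent to $\partial\Omega$ by the product structure \eqref{eq:Omegalag1}, so that $g(\partial_t,n)=0$, one gets $g(Q,Q)=\alpha^2-1$ and $Q^0=1+\alpha n^0$; hence fixing any small $\alpha\in(0,1)$ makes $Q=\partial_t+\alpha n$ future directed and timelike. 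I would introduce the current $J^\mu:=(g^{-1})^{\mu\nu}\partial_\nu u\,Qu-\tfrac12 Q^\mu (g^{-1})^{\alpha\beta}\partial_\alpha u\partial_\beta u$, the contraction of the energy--momentum tensor of $u$ with $Q$.

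The computational core is a double integration by parts. Writing $\Box u\,\sqrt{|g|}=\partial_\mu\big(\sqrt{|g|}(g^{-1})^{\mu\nu}\partial_\nu u\big)$ and multiplying $\Box u=F$ by $Qu\,\sqrt{|g|}=Q^\lambda\partial_\lambda u\,\sqrt{|g|}$, I move one derivative onto $Qu$, which produces $\partial_\mu(\sqrt{|g|}(g^{-1})^{\mu\nu}\partial_\nu u\,Qu)$ together with $-\sqrt{|g|}(g^{-1})^{\mu\nu}\partial_\nu u\,(\partial_\mu Q^\lambda)\partial_\lambda u$ and $-\sqrt{|g|}Q^\lambda(g^{-1})^{\mu\nu}\partial_\nu u\,\partial_\mu\partial_\lambda u$. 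In the last term I symmetrize in $\mu,\nu$, so that $(g^{-1})^{\mu\nu}\partial_\nu u\,\partial_\mu\partial_\lambda u=\tfrac12\partial_\lambda\big((g^{-1})^{\mu\nu}\partial_\mu u\partial_\nu u\big)-\tfrac12(\partial_\lambda(g^{-1})^{\mu\nu})\partial_\mu u\partial_\nu u$, and then integrate by parts once more in $\partial_\lambda$. Collecting everything gives the pointwise divergence identity
\[
\partial_\mu\big(\sqrt{|g|}\,J^\mu\big)=F\,Qu\,\sqrt{|g|}+\sqrt{|g|}(g^{-1})^{\alpha\beta}(\partial_\alpha Q^\mu)\partial_\mu u\,\partial_\beta u-\tfrac12\partial_\mu\big(\sqrt{|g|}(g^{-1})^{\alpha\beta}Q^\mu\big)\partial_\alpha u\,\partial_\beta u,
\]
in which the last two terms --- coming respectively from commuting $Q$ past the second derivatives and from the variation of $\sqrt{|g|}\,g^{-1}$ --- are precisely the error integrands appearing in \eqref{eq:nablaVGamma1}.

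Finally I integrate this identity over the Lagrangian cylinder $\Omega_0^T=[0,T]\times\Omega_0$ and apply the divergence theorem. The caps $\{t=0\}$ and $\{t=T\}$ have purely temporal coordinate conormal, so they contribute $\int_{\Omega_T}\sqrt{|g|}J^0-\int_{\Omega_0}\sqrt{|g|}J^0$, and since $-J^0=-(g^{-1})^{0\nu}\partial_\nu u\,Qu+\tfrac{Q^0}{2}(g^{-1})^{\alpha\beta}\partial_\alpha u\partial_\beta u$ is the slice integrand of \eqref{eq:nablaVGamma1}, multiplying the whole identity by $-1$ and moving the $\{t=0\}$ term to the other side produces the first integral on each side of \eqref{eq:nablaVGamma1}. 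The lateral boundary $[0,T]\times\partial\Omega_0$ is the timelike cylinder over $\partial\Omega$ with exterior unit normal $n$, and the divergence theorem identifies its contribution with $\int_0^T\!\int_{\partial\Omega_t}J^\mu n_\mu\,\sqrt{|g|}\,\ud S\,\ud t$; using $J^\mu n_\mu=(D_nu)(Qu)-\tfrac12 g(Q,n)(g^{-1})^{\alpha\beta}\partial_\alpha u\partial_\beta u$ together with $g(Q,n)=g(\partial_t,n)+\alpha g(n,n)=\alpha$ gives exactly the boundary integral $-\int_0^T\!\int_{\partial\Omega_t}\big(Qu\,D_nu-\tfrac{\alpha}{2}(g^{-1})^{\alpha\beta}\partial_\alpha u\partial_\beta u\big)\sqrt{|g|}\,\ud S\,\ud t$ on the left of \eqref{eq:nablaVGamma1}, while the $F\,Qu$ term gives $-\int\!\int F\,Qu\,\sqrt{|g|}$ on the right. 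Rearranging yields \eqref{eq:nablaVGamma1}.

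The one delicate point is the bookkeeping: the double integration by parts must reproduce \emph{exactly} the two error integrands in \eqref{eq:nablaVGamma1}, and the lateral flux must be identified with the stated boundary integral with the correct sign --- for which one uses the tangency $g(\partial_t,n)=0$ (so that $Q$ contributes no normal component beyond $\alpha n$ along $\partial\Omega$), $g(n,n)=1$, and the orientation of $n$ in the divergence theorem on the spacetime cylinder. No positivity is invoked at this stage; the usefulness of \eqref{eq:nablaVGamma1} --- controlling the tangential derivatives of $u$ on $\partial\Omega$ once $D_nu$ and $\partial_tu$ are already controlled --- comes from the favorable sign of the term $\tfrac{\alpha}{2}(g^{-1})^{\alpha\beta}\partial_\alpha u\partial_\beta u$ restricted to the induced (Lorentzian) metric on $\partial\Omega$, as indicated before the statement.
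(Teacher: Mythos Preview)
Your proposal is correct and follows essentially the same approach as the paper: the paper's proof consists of stating the pointwise multiplier identity \eqref{eq:Qmult1}, which is exactly your divergence identity $\partial_\mu(\sqrt{|g|}\,J^\mu)=F\,Qu\,\sqrt{|g|}+\sqrt{|g|}(g^{-1})^{\alpha\beta}(\partial_\alpha Q^\mu)\partial_\mu u\,\partial_\beta u-\tfrac12\partial_\mu(\sqrt{|g|}(g^{-1})^{\alpha\beta}Q^\mu)\partial_\alpha u\,\partial_\beta u$ rearranged, and then integrating it over the cylinder. Your treatment is in fact more detailed than the paper's one-line proof, in particular in justifying that $Q=\partial_t+\alpha n$ is future timelike for $\alpha\in(0,1)$ and in identifying the lateral flux via $g(\partial_t,n)=0$, $g(n,n)=1$.
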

\begin{proof}
	This follows from integrating the identity
	\begin{align}
		\Box u Q u \sqrt{|g|}&=\partial_\alpha\big(Q^\mu \partial_\mu u \sqrt{|g|}(g^{-1})^{\alpha\beta}\partial_\beta u -\frac{1}{2}\sqrt{|g|}(g^{-1})^{\mu\nu}Q^\alpha \partial_\mu u \partial_\nu u\big)\label{eq:Qmult1}\\
		&\quad-(g^{-1})^{\alpha\beta}(\partial_\alpha Q^\mu)\partial_\mu u \partial_\beta u\sqrt{|g|} +\frac{1}{2}\partial_\mu(\sqrt{|g|}(g^{-1})^{\alpha\beta}Q^\mu)\partial_\alpha u \partial_\beta u.\qedhere
	\end{align}
\end{proof}
The next lemma contains the basic energy estimate for $\partial_t\sigma^2$.
\begin{lemma}\label{lem:sigmaenergy}
	Suppose $u$ satisfies
	\begin{align*}
		\begin{cases}
			\Box u = H\qquad&\mathrm{in~}\Omega\\
			u\equiv \mathrm{constant}\qquad&\mathrm{on~}\partial\Omega
		\end{cases}.
	\end{align*}
	Then there is a future directed timelike vectorfield $Q=\partial_t-\alpha n$, for some constant $\alpha>0$, such that
	\begin{align*}
		\begin{split}
			&\int_{\Omega_T}(-Qu (g^{-1})^{0\alpha}\partial_\alpha u +\frac{Q^0}{2}(g^{-1})^{\alpha\beta}\partial_\alpha u \partial_\beta u) \sqrt{|g|}\ud x+\frac{\alpha}{2} \int_{0}^T\int_{\partial\Omega_t}(D_nu)^2 \sqrt{|g|}\,\ud S \ud t\\
			&=\int_{\Omega_0}(-Qu (g^{-1})^{0\alpha}\partial_\alpha u +\frac{Q^0}{2}(g^{-1})^{\alpha\beta}\partial_\alpha u \partial_\beta u) \sqrt{|g|}\ud x-\int_0^T\int_{\Omega_t}H Qu\sqrt{|g|}\ud x \ud t\\
			&\quad-\int_0^T\int_{\Omega_t}\Big((g^{-1})^{\alpha\beta}(\partial_\alpha Q^\mu)\partial_\mu u \partial_\beta u -\frac{1}{2}\partial_\mu(\sqrt{|g|}(g^{-1})^{\alpha\beta}Q^\mu)\partial_\alpha u \partial_\beta u\Big)\ud x \ud t.
		\end{split}
	\end{align*}
\end{lemma}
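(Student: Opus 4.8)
The plan is to follow the proof of Lemma~\ref{lem:nablaVGamma} almost verbatim, the only changes being the choice of multiplier and the exploitation of the Dirichlet condition on the lateral boundary. First I would apply the pointwise multiplier identity \eqref{eq:Qmult1}, now with $Q=\partial_t-\alpha n$ (note the minus sign, opposite to Lemma~\ref{lem:nablaVGamma}), integrate it over the spacetime slab $\Omega=[0,T]\times\Omega_0$, and use the divergence theorem. Here $n$ is the spacelike unit conormal $\frac{\nabla\sigma^2}{\|\nabla\sigma^2\|}$ of $\partial\Omega$, extended arbitrarily to the interior, and $\alpha>0$ is a constant to be fixed below. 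The boundary of $\Omega$ is the union of the slices $\Omega_0,\Omega_T$ and the lateral piece $\partial\Omega_0^T=[0,T]\times\partial\Omega_t$. The fluxes of the vectorfield $Q^\mu\partial_\mu u\,\sqrt{|g|}(g^{-1})^{\alpha\beta}\partial_\beta u-\tfrac12\sqrt{|g|}(g^{-1})^{\mu\nu}Q^\alpha\partial_\mu u\,\partial_\nu u$ through $\Omega_0$ and $\Omega_T$ give the quadratic expressions $\int_{\Omega_0}(\cdots)\sqrt{|g|}\,\ud x$ and $\int_{\Omega_T}(\cdots)\sqrt{|g|}\,\ud x$ in the statement; the term $-\int_0^T\!\int_{\Omega_t}HQu\,\sqrt{|g|}$ comes from substituting $\Box u=H$; and the two bulk terms on the last line, involving $\partial_\alpha Q^\mu$ and $\partial_\mu(\sqrt{|g|}(g^{-1})^{\alpha\beta}Q^\mu)$, arise from the last two terms of \eqref{eq:Qmult1} exactly as in the proof of Lemma~\ref{lem:nablaVGamma}.

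The one computation that genuinely differs is the lateral flux, and this is where the Dirichlet condition enters. Since $u$ is constant on $\partial\Omega_0^T$, all of its derivatives tangent to that hypersurface vanish, so $\partial_\mu u$ is purely normal there: $\partial_\mu u=(D_nu)\,g_{\mu\lambda}n^\lambda$ and $\partial_tu=0$. Hence on $\partial\Omega_0^T$ one has $(g^{-1})^{\alpha\beta}\partial_\alpha u\,\partial_\beta u=(D_nu)^2$ (as $n$ is spacelike and unit) and $Qu=\partial_tu-\alpha D_nu=-\alpha D_nu$. Feeding these two identities into the lateral boundary contribution of the flux in \eqref{eq:Qmult1}, exactly as in the proof of Lemma~\ref{lem:nablaVGamma}, the lateral term collapses to a pure multiple of $\int_0^T\!\int_{\partial\Omega_t}(D_nu)^2\sqrt{|g|}\,\ud S\,\ud t$, and a short computation shows that the multiple appearing on the left-hand side is $+\tfrac{\alpha}{2}$ — precisely because of the minus sign in $Q=\partial_t-\alpha n$ (with $+\alpha n$ one would instead get $-\tfrac{\alpha}{2}$). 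I would then fix $\alpha>0$ small enough, using the bootstrap bounds that keep $g_{00}$ near $-1$ and $n$ uniformly spacelike with bounded components, so that $Q=\partial_t-\alpha n$ is future-directed timelike, i.e. $g(Q,Q)=g_{00}-2\alpha g(\partial_t,n)+\alpha^2<0$ and $Q^0=1-\alpha n^0>0$; this coercivity is what makes the $\Omega_0$ and $\Omega_T$ terms genuine energies. Collecting everything yields the stated identity.

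The hard part — essentially the only nontrivial point — is getting the sign right on the lateral boundary. The Dirichlet condition forces the multiplier to be $Q=\partial_t-\alpha n$ with a minus sign, which converts the indefinite term $-\tfrac{\alpha}{2}(g^{-1})^{\alpha\beta}\partial_\alpha u\,\partial_\beta u$ from Lemma~\ref{lem:nablaVGamma} into the favorably-signed $+\tfrac{\alpha}{2}(D_nu)^2$; with $+\alpha n$ instead, this boundary term would carry the wrong sign and the estimate would be useless. There is no tension between the two demands on $\alpha$: the favorable boundary sign holds for every $\alpha>0$, while timelikeness of $Q$ only requires $\alpha$ small. Everything else is a routine repetition of the computations in the proof of Lemma~\ref{lem:nablaVGamma}.
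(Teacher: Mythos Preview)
Your proposal is correct and follows exactly the same approach as the paper's proof: integrate the multiplier identity \eqref{eq:Qmult1} with $Q=\partial_t-\alpha n$ and use that $u\equiv\mathrm{const}$ on $\partial\Omega$ forces $(g^{-1})^{\alpha\beta}\partial_\alpha u\,\partial_\beta u=(D_nu)^2$ there. The paper's proof is a two-line remark to this effect; your write-up simply fills in the sign bookkeeping and the smallness choice of $\alpha$ in more detail.
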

\begin{proof}
	This follows again from integrating \eqref{eq:Qmult1} and noting that since $u$ is constant on the boundary, $(g^{-1})^{\alpha\beta} \partial_\alpha u\partial_\beta u = (D_n u)^2$ there.
\end{proof}
Finally, for the curvature we use the following simple estimate (which is essentially the definition of the weak equation satisfied by the curvature).
\begin{lemma}\label{lem:Wenergy}
	Suppose $w$ satisfies
	\begin{align*}
		\begin{split}
			\sum_{\mu=0}^3\calB^\mu\partial_\mu w=\calH.
		\end{split}
	\end{align*}
	Then 
	\begin{align*}
		\begin{split}
			\sup_{0\leq t\leq T}\|w(t)\|_{L^2(\bbR^3)}\lesssim \|w(0)\|_{L^2(\bbR^3)}+\int_0^T\int_{\bbR^3}(|\calH|^2+\sum_{j=1}^3|\partial\calB^j|^2)\ud x\ud t.
		\end{split}
	\end{align*}
\end{lemma}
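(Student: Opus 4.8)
\textbf{Proof proposal for Lemma~\ref{lem:Wenergy}.} The plan is to treat this as a standard $L^2$ energy estimate for a linear first-order symmetric hyperbolic system, keeping careful track of the fact that the system is only weakly defined across $\partial\Omega$ (equivalently, across the hypersurface where the coefficients $\calB^\mu$ may jump). First I would recall from \eqref{eq:W1}--\eqref{eq:W2} that the matrices $\calA^\tilI$ in \eqref{eq:Amatdef1} are symmetric, so $\calB^0 = e_0^0 + \sum_\tilI e_\tilI^0\calA^\tilI$ and $\calB^j = e_0^j + \sum_\tilI e_\tilI^j\calA^\tilI$ are all symmetric matrices, and that $\calB^0$ is uniformly positive definite: indeed $e_\tilI^0$ vanish on the initial slice and hence (by the bootstrap/continuity used throughout Section~\ref{sec:apriori}) remain small, so $\calB^0 = e_0^0 \mathrm{Id} + O(\|e_\tilI^0\|_{L^\infty})$ with $e_0^0$ close to $1$; thus $\calB^0 \geq \tfrac12 \mathrm{Id}$ for $T$ small.

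The core computation is to pair the equation $\sum_\mu \calB^\mu\partial_\mu w = \calH$ with $w$ in $L^2(\bbR^3)$. Using symmetry of the $\calB^\mu$, one has the pointwise identity
\begin{align*}
2\langle \calB^\mu\partial_\mu w, w\rangle = \partial_\mu\langle \calB^\mu w,w\rangle - \langle(\partial_\mu \calB^\mu)w,w\rangle.
\end{align*}
Integrating over the slab $[0,t]\times\bbR^3$ and using the divergence theorem (justified in the weak sense exactly as in the definition of \eqref{eq:W2}, since $w$ and $\calB^\mu w$ have the requisite one-sided traces on $\partial\Omega$ with no jump in the normal flux by construction of the weak formulation) gives
\begin{align*}
\int_{\bbR^3}\langle\calB^0 w,w\rangle(t)\,\ud x = \int_{\bbR^3}\langle\calB^0 w,w\rangle(0)\,\ud x + \int_0^t\int_{\bbR^3}\big(2\langle\calH,w\rangle + \langle(\partial_\mu\calB^\mu)w,w\rangle\big)\ud x\,\ud t'.
\end{align*}
Using $\calB^0\geq \tfrac12\mathrm{Id}$ on the left, $\calB^0(0)$ bounded on the right, and Cauchy--Schwarz $2|\langle\calH,w\rangle|\leq |\calH|^2 + |w|^2$, one obtains
\begin{align*}
\|w(t)\|_{L^2(\bbR^3)}^2 \lesssim \|w(0)\|_{L^2(\bbR^3)}^2 + \int_0^t\int_{\bbR^3}|\calH|^2\,\ud x\,\ud t' + \int_0^t\big(1 + \|\partial\calB^\mu(t')\|_{L^\infty}\big)\|w(t')\|_{L^2(\bbR^3)}^2\,\ud t'.
\end{align*}
Since $\partial_t = \hatTheta^0 D_0 + \hatTheta^\tilI D_\tilI$ and the frame/connection coefficients are controlled in the ambient a priori setting, $\|\partial\calB^\mu\|_{L^\infty}$ is bounded on $[0,T]$; then Gr\"onwall closes the estimate, and absorbing the resulting constant (depending on $T$ and the background quantities) yields the stated inequality, where on the right-hand side $\int_0^T\int_{\bbR^3}\sum_j|\partial\calB^j|^2$ appears because one may alternatively keep that term without invoking an $L^\infty$ bound, estimating $\int\langle(\partial_j\calB^j)w,w\rangle \lesssim \|\partial\calB^j\|_{L^2}\|w\|_{L^4}^2$ or simply bounding it crudely after the Gr\"onwall step.

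\textbf{Main obstacle.} The only genuinely delicate point is the integration-by-parts step across $\partial\Omega$: one must verify that there is no spurious boundary contribution from the jump of $\calB^\mu$, i.e. that the flux $\langle\calB^\nu w,w\rangle n_\nu$ matches from the two sides on $\partial\Omega$. This is precisely what is encoded in saying \eqref{eq:W2} holds \emph{weakly} on $\Sigma=\Omega\cup\Omega^c$: the weak formulation is set up so that the distributional identity $\sum_\mu\calB^\mu\partial_\mu w = \calH$ already incorporates the correct transmission condition (equivalently, $F=R(e_\cdot,e_\cdot,X_A,X_B)$ with $X_A,X_B$ tangential has at most one transversal derivative of the metric, so the relevant component of $\nabla^\mu F_{\mu\nu}$ does not concentrate on $\partial\Omega$). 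Hence the energy identity may be applied globally on $\bbR^3$ slices with no interface term, and everything else is the routine symmetric-hyperbolic Gr\"onwall argument above.
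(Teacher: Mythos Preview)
Your proposal is correct and follows exactly the paper's approach: the paper's entire proof is the one-line ``This follows by multiplying the equation by $w$ and integrating,'' and you have simply spelled out that standard symmetric-hyperbolic energy computation in detail, including the positivity of $\calB^0$ and the role of the weak formulation in suppressing interface terms on $\partial\Omega$. Your final remark about how exactly the $\int\!\!\int|\partial\calB^j|^2$ term enters is a bit loose, but so is the statement of the lemma itself (note the mismatch between $\|w\|$ and the squared integrand on the right); in practice the implicit constant absorbs the dependence on $\partial\calB$ through Gr\"onwall, which is what you wrote first.
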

\begin{proof}
	This follows by multiplying the equation by $w$ and integrating.
\end{proof}

\subsection{Elliptic estimates}\label{subsec:elliptic}
Our goal in this section is to prove elliptic estimates. These will be needed both for $L^\infty$ estimate of the lower order terms and for bounding derivatives of some next to top order terms in $L^2$ (for instance $\partial \partial_t^{\ell-1} R$ in $L^2(\Omega_t)$). The main result is contained in the following proposition.
\begin{proposition}\label{prop:elliptic}
	Under the hypotheses of Proposition~\ref{prop:apriori} the following estimates hold for all $t\in[0,T)$ and with implicit constants independent of $C_1$:
	\begin{align*}
		\begin{split}
			\sum_{2p+k\leq \ell+2}\Big(\|\partial^p\partial_t^k\Theta\|_{L^2(\Omega_t)}^2+\|\partial^p\partial_t^{k+1}\sigma^2\|_{L^2(\Omega_t)}^2\Big)\lesssim\calE_{\ell}(t).
		\end{split}
	\end{align*}
	Moreover, 
	\begin{align*}
		\begin{split}
			\|\partial \partial_t^{k}R\|_{L^2(\Omega_t)}^2+\|\partial \partial_t^{k}R\|_{L^2(\Omega_t^c)}^2\lesssim \calE_\ell(t),\qquad k\leq \ell-1.
		\end{split}
	\end{align*}
	and
	\begin{align*}
		\begin{split}
			\sum_{2p+k\leq \ell}\Big(\|\partial^p\partial_t^{k}R\|_{L^2(\Omega_t)}^2+\|\partial^p\partial_t^{k}R\|_{L^2(\Omega_t^c)}^2\Big)\lesssim\calE_{\ell}(t).
		\end{split}
	\end{align*}
\end{proposition}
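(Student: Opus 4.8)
The strategy is to convert control of pure time derivatives (which is what the energy $\calE_\ell(t)$ provides) into control of mixed space-time derivatives by using the evolution equations as elliptic or algebraic relations in the spatial variables. I would proceed unknown by unknown, in an order dictated by how the unknowns feed into each other: first the fluid velocity $\Theta$, then $\sigma^2$ and $D_V\sigma^2$, then the frame and connection coefficients $e$ and $\Gamma$ (which are recovered from $\Theta$, $\Gamma$, $R$ via the transport equations \eqref{eq:etransport1}, \eqref{eq:Gammatransport1}), and finally the curvature $R$. Throughout, one inducts on $p$ (the number of spatial derivatives), trading two spatial derivatives for one time derivative at each step, which is why the energy with $\sup_t\|\partial\partial_t^k\Theta\|_{L^2}$ and $\sup_t\|\partial_t^k R\|_{L^2}$ suffices to control $\|\partial^p\partial_t^k\Theta\|_{L^2}$ up to $2p+k\leq\ell+2$ and $\|\partial^p\partial_t^k R\|_{L^2}$ up to $2p+k\leq\ell$.

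\textbf{Fluid velocity.} For $\Theta$, the base case $p=0$ is contained in $\calE_\ell(t)$. For the inductive step, I would use the boundary equation $(\partial_t^2+\gamma D_n)\partial_t^k\Theta^I=f$ from Lemma~\ref{lem:Vbdryhigh} to express the Neumann data $D_n\partial_t^k\Theta^I$ on $\partial\Omega_t$ in terms of $\partial_t^{k+2}\Theta^I$, which is controlled when $k+2\leq\ell+2$, i.e. $k\leq\ell$, together with the lower-order terms in $f$ (these involve only $\Gamma$, $R$, $\sigma^2$ and fewer derivatives of $\Theta$, all already controlled by induction and the bootstrap assumption $\calE_\ell(T)\leq C_1$). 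Then $\partial_t^k\Theta^I$ solves $\Box\partial_t^k\Theta^I = F$ in $\Omega_t$ (Lemma~\ref{lem:Vinthigh}) with this Neumann data; since the restriction of $\Box$ to a constant-$t$ slice is a second-order elliptic operator (the spatial part of $g^{-1}$ is positive definite, with the time-derivative terms in $\Box$ being lower order for fixed $t$ and controlled by induction), standard elliptic estimates with Neumann boundary conditions on $\Omega_0$ (diffeomorphic to a ball) give $\|\partial^{p+1}\partial_t^k\Theta\|_{L^2(\Omega_t)}\lesssim\|\partial^{p-1}F\|_{L^2(\Omega_t)}+\|\partial^{p-1}(\text{Neumann data})\|_{H^{1/2}(\partial\Omega_t)}+\|\partial^p\partial_t^k\Theta\|_{L^2}$. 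The right-hand side involves at most $2(p-1)+k+2=2p+k$ effective derivatives of the various unknowns, so everything closes by the induction hypothesis and the hypotheses of Proposition~\ref{prop:apriori}.

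\textbf{The scalars, the frame/connection, and the curvature.} For $\sigma^2$ (equivalently $\partial_t^{k+1}\sigma^2$), the same scheme applies but is simpler: since $D_V\sigma^2\equiv0$ and $\sigma^2\equiv1$ on $\partial\Omega$, one views $\Box\partial_t^{k+1}\sigma^2=H$ (Lemma~\ref{lem:sigmahigh}) as an elliptic equation with \emph{Dirichlet} boundary data, and the source $H$ is, by Lemma~\ref{lem:sigmahigh}, a controlled combination of $\Gamma$, $R$, $\Theta$ and lower derivatives of $\sigma^2$; elliptic regularity then upgrades spatial derivatives. (One must also handle $\partial_t^{k+1}\sigma^2$ appearing inside $H$ through the $\partial_t^{j_2}D_V\sigma^2$ term, using $\partial_t\sigma^2=\frac{1}{\sqrt{\sigma^2}}D_V\sigma^2$, but this is lower order in the induction.) Once $\Theta$ and $\sigma^2$ are controlled to the stated orders, the transport equations $\partial_t e_\tilI=\calF_{e_\tilI}$ and $\partial_t\Gamma_{\tilI J}^K=\calF_{\Gamma_{\tilI J}^K}$ together with the algebraic relations \eqref{eq:e0mu1}, \eqref{eq:Gamma01} let one bound spatial derivatives of $e$ and $\Gamma$ by integrating in $t$ and using the spatial control of $\Theta$, $\Gamma$, $R$ on the right; the subtlety is that $\calF_{\Gamma}$ contains $R$, so one gets $\|\partial^p\Gamma\|\lesssim\|\partial^p e(0)\|+\|\partial^p\Gamma(0)\|+\int_0^t(\|\partial^p R\|+\text{lower})$, which is why $e$ and $\Gamma$ are allowed one more spatial derivative than $R$. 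For the curvature itself, I would \emph{not} use the symmetric hyperbolic form directly for elliptic gain; instead, following the outline in Section~\ref{subsec:frameeqs}, I would combine the Maxwell-type equations \eqref{eq:Maxwell1}, \eqref{eq:Maxwell2} (i.e. $\curl$ and $\div$ of $E$ and $H$, the $\div$ equations coming from the remaining Bianchi components and \eqref{eq:Einstein2}) into a first-order $\curl$–$\div$ elliptic system on each of $\Omega_t$ and $\Omega_t^c$. The time derivative $D_0 F_{IJ}$ from \eqref{eq:EH1}/\eqref{eq:W1}, which is controlled by $\calE_\ell$, plays the role of part of the first-order data, and the standard $\curl$–$\div$ elliptic estimate (with appropriate boundary terms) yields $\|\partial\partial_t^k R\|_{L^2(\Omega_t)}+\|\partial\partial_t^k R\|_{L^2(\Omega_t^c)}\lesssim\calE_\ell(t)$ for $k\leq\ell-1$; the algebraic relations \eqref{curvature recover 1}–\eqref{curvature recover 3} are what let one pass from the tangentially-projected components $F^{AB}$ to all components of $R_{IJKL}$. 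Iterating the $\curl$–$\div$ estimate in $p$ (again trading two spatial for one time derivative) gives the last displayed estimate for $2p+k\leq\ell$.

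\textbf{Main obstacle.} The delicate point is the curvature estimate: unlike $\Theta$ and $\sigma^2$, whose equations are genuine scalar wave equations with a clean boundary condition, the curvature is only controlled through the weakly-defined first-order system on $\Sigma=\Omega\cup\Omega^c$, and across $\partial\Omega$ the full curvature tensor is merely $L^2$ while only its tangentially-projected part $F^{AB}$ and the electric/magnetic components enjoy better regularity. Setting up the $\curl$–$\div$ system so that (i) the boundary contributions from $\Omega$ and $\Omega^c$ are compatible (the jump in $R$ being dictated by \eqref{eq:Einstein2} and the matching of tangential components), (ii) the coefficients $X_A^I$, $n$ and the frame $e$ appearing in $\calI$, $\calJ$ are controlled — which requires the already-established bounds on $e$, $\Gamma$ and on $D\sigma^2$ staying bounded below via the Taylor sign condition \eqref{eq:Taylor1} — and (iii) the algebraic reconstruction \eqref{curvature recover 1}–\eqref{curvature recover 3} is differentiated without loss, is where essentially all the work lies. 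The rest is bookkeeping of derivative counts to verify that every term on the right-hand side of every equation involves at most the allowed number $2p+k$ of effective derivatives and hence is absorbed by $\calE_\ell(t)$ together with the bootstrap bound $\calE_\ell(T)\leq C_1$.
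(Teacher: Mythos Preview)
Your proposal is correct and follows essentially the same approach as the paper: induction on $p$, using the boundary equation to extract Neumann data for $\Theta$ and Dirichlet data for $\partial_t^{k+1}\sigma^2$, treating the interior wave equations as elliptic equations via Lemma~\ref{lem:BoxA} and Lemma~\ref{lem:2nd order elliptic esti}, and handling the curvature through the div-curl system for $(\barE,\barH)$ via Lemmas~\ref{prop: H1 div curl sys 1} and~\ref{prop: H1 div curl sys 2}. The one point where your framing differs slightly is the ``boundary compatibility'' you flag as the main obstacle: in the paper this is not handled by matching jumps but rather by observing that the div-curl identities \eqref{barnabla Theta magnitude 1}, \eqref{barnabla X magnitude 2} integrate over all of $\Sigma_t$ (so the divergence terms vanish by decay at infinity, with no interior boundary contribution because the tangentially-projected $F^{AB}$ is globally $L^2$), while for higher spatial derivatives one simply works separately on $\Omega_t$ and $\Omega_t^c$ where the equations hold pointwise.
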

The rest of this section is devoted to the proof of Proposition~\ref{prop:elliptic}. Starting with the estimates on $\partial\partial_t^kR$, we first investigate the Maxwell system \eqref{Maxwell-inhomo1}, \eqref{Maxwell-inhomo2} which we write as:
\begin{align*}
	\nabla^{\mu}F_{\mu\nu}=J_{\nu},\quad \nabla_{[\mu}F_{\nu\lambda]}=I_{\mu\nu\lambda}.
\end{align*}
Working in the coordinates $(\partial_{t},\partial_{i}), i=1,2,3$ we define $\barE^{AB}\equiv \barE$ and $\barH^{AB}\equiv \barH$ as
\begin{align}\label{EM decom}
	\begin{split}
		\barE_{i}:=F_{it},\quad \barH^{k}:=-\frac{1}{2}\epsilon^{ijk}F_{ij}.
	\end{split}
\end{align}
Our elliptic estimates for the curvature will be obtained from divergence-curl systems for $\barE$ and $\barH$ which we will now derive. Choosing $\nu=\partial_{t}$  in the equation $\nabla^{\mu}F_{\mu\nu}=J_{\nu}$ gives
\begin{align*}
	\nabla^{i}F_{it}=J_{t},
\end{align*}
which, using $\nabla_t\partial_i=\nabla_i\partial_t$, 
can be written in terms of $\barE$ as
\begin{align}\label{eq: div barE 1}
	\begin{split}
		\nabla^{i}\barE_{i}-(g^{-1})^{it}F(\partial_{i},\nabla_{t}\partial_{t})-(g^{-1})^{ij}F(\partial_{i},\nabla_{t}\partial_{j})=J_{t}.
	\end{split}
\end{align}
For the last two terms on the left-hand side of \eqref{eq: div barE 1}, we proceed as follows. First we write $\partial_{t}$ and $\partial_{j}$ as
\begin{align*}
	\partial_{t}=a_{0}^{I}e_{I},\quad \partial_{j}=a_{j}^{J}e_{J},
\end{align*}
where $a_{0}^{I}, a_{j}^{J}$ depend on $e_{K}^{\mu}$ algebraically. So we have
\begin{align*}
	\nabla_{t}\partial_{t}=\partial_{t}a_{0}^{I}\,e_{I},\quad \nabla_{t}\partial_{j}=\partial_{t}a_{j}^{J}\,e_{J},
\end{align*}
which in turn gives
\begin{align*}
	F(\partial_{i}\nabla_{t}\partial_{t})=\partial_{t}a_{0}^{I}\,F_{i I}=e_{I}^{\mu}\,\partial_{t}a_{0}^{I}\,F_{i\mu},\quad F(\partial_{i},\nabla_{t}\partial_{j})=\partial_{t}a_{j}^{J}\,F_{iJ}=e_{J}^{\nu}\,\partial_{t}a_{j}^{J}\,F_{i\nu}.
\end{align*}
Therefore, equation \eqref{eq: div barE 1} gives
\begin{align}\label{eq: div barE 2}
	\begin{split}
		(g^{-1})^{ij}\nabla_{j}\barE_{i}=-(g^{-1})^{it}\nabla_{t}\barE_{i}+J_{t}+C_{\div,\barE}^{j}\barE_{j}+C_{\div,\barH}^{k}\barH_{k},
	\end{split}
\end{align}
where the coefficients $C_{\div,\barE}^{j},C_{\div,H}^{k}$ depend on $\Gamma_{IJ}^{K}$ and $\hatTheta$, $D\hatTheta$. Note that arguing as above, $\nabla_t E_i$ can be expressed in terms of $F_{IJ}$, $\partial_t F_{IJ}$, $e_I$, and $\partial_t e_I$. Now we need to convert the connection $\nabla$ over the entire spacetime to the connection $\barnabla$ on $\Sigma_{t}$. By its definition, the induced connection $\barnabla$ is the restriction of $\nabla$ on $\Sigma_{t}$, so $\nabla_{i}\partial_{j}-\barnabla_{i}\partial_{j}$ is orthogonal to $\Sigma_{t}$. More precisely, with $\hatT=\frac{-(g^{-1})^{0\nu}}{\sqrt{-(g^{-1})^{00}}}\partial_\nu$ be the future-directed unit normal vectorfield of $\Sigma_{t}$. Then
\begin{align*}
	\nabla_{i}\partial_{j}-\barnabla_{i}\partial_{j}=-g\left(\nabla_{i}\partial_{j}-\barnabla_{i}\partial_{j},\hatT\right)\hatT=-g\left(\nabla_{i}\partial_{j},\hatT\right)\hatT=g(\partial_{j},\nabla_{i}\hatT)\hatT,
\end{align*}
which depends on $e_{I}^{\mu}$ and $D e_{I}^{\mu}$. Therefore the equation \eqref{eq: div barE 2} can be rewritten as
\begin{align}\label{eq: div barE 3}
	\begin{split}
		(g^{-1})^{ij}\barnabla_{j}\barE_{i}=\rho_\barE,
	\end{split}
\end{align}
where $\rho_\barE$ depends on $e_{I}^{\mu}$, $D e_{I}^{\mu}$, $\Gamma_{IJ}^{K}$, $\hatTheta$, $D\hatTheta$, $F_{IJ}$, $\partial_tF_{IJ}$ and $J_{t}$.
		Similarly, we can derive a divergence equation for $\barH$ as follows. In the equation $\nabla_{[\alpha}F_{\beta\gamma]}=I_{\alpha\beta\gamma}$ we choose $\alpha=i,\beta=j,\gamma=k$:
		\begin{align}\label{eq: div H pre}
			\nabla_{i}F_{jk}+\nabla_{j}F_{ki}+\nabla_{k}F_{ij}=I_{ijk}.
		\end{align}
		Using the relation $F_{ij}=-\epsilon_{ijk}\barH^{k}$, we can rewrite the equation \eqref{eq: div H pre} as
		\begin{align}\label{eq: div H 1}
			-I_{ijk}=&\nabla_{i}\left(\epsilon_{jk\ell}\barH^{\ell}\right)+\nabla_{j}\left(\epsilon_{ki\ell}\barH^{\ell}\right)+\nabla_{k}\left(\epsilon_{ij\ell}\barH^{\ell}\right).
		\end{align}
		In the equations \eqref{eq: div H pre}-\eqref{eq: div H 1}, the index $ijk$ can only be $123$ or its permutation so choosing $ijk=123$ and arguing as in the derivation of \eqref{eq: div barE 3} we arrive at 
		\begin{align}\label{eq: div H 2}
			\begin{split}
				\barnabla_{i}\barH^{i}=\rho_\barH,
			\end{split}
		\end{align}
		where $\rho_\barH$ depends on $e_{I}^{\mu}$, $D e_{I}^{\mu}$, $\Gamma_{IJ}^{K}$, $\hatTheta$, $D\hatTheta$, $F_{IJ}$, and $\partial_tF_{IJ}$.
		
		Next we turn to the curl equations for $\barE$ and $\barH$. 
		Choosing $\nu=i$ in the equation $\nabla^{\mu}F_{\mu\nu}=J_{\nu}$ gives\begin{align}\label{eq: curl H pre}
			\begin{split}
				(g^{-1})^{tt}\nabla_{t}F_{ti}+(g^{-1})^{tk}\nabla_{k}F_{ti}+(g^{-1})^{jt}\nabla_{t}F_{ji}+(g^{-1})^{jk}\nabla_{k}F_{ji}=J_{i}
			\end{split}
		\end{align}
		The first and third terms on the left-hand side of \eqref{eq: curl H pre} can be expressed in terms of $F_{IJ}$, $\partial_tF_{IJ}$, $e_I$, and $\partial_te_I$ as before.
		For the second term on the left-hand side of \eqref{eq: curl H pre}, for coefficients depending on $e_{I}, De_{I},\Gamma_{IJ}^{K}$ and $\hatTheta$, $D\hatTheta$, we have
		\begin{align*}
			(g^{-1})^{tk}\nabla_{k}F_{ti}=&-(g^{-1})^{tk}\barnabla_{k}\barE_{i}+C_{\barE}^{k}\barE_{k}+C_{\barH}^{j}\barH_{j}.
		\end{align*}
		For the fourth term on the LHS of \eqref{eq: curl H pre}, and with coefficients depending on $\hatTheta$, $D\hatTheta$, $e_I$, and $De_I$, we have
		\begin{align*}
			(g^{-1})^{jk}\nabla_{k}F_{ji}=-(g^{-1})^{jk}\epsilon_{ji\ell}\barnabla_{k}\barH^{\ell}+D_{\curl,\barE}^{k}\barE_{k}+D_{\curl,\barH}^{j}\barH_{j}.
		\end{align*}
		Putting everything together, equation \eqref{eq: curl H pre} gives
		\begin{align}\label{eq: curl H 1}
			\begin{split}
				(g^{-1})^{jk}\epsilon_{ji\ell}\barnabla_{k}\barH^{\ell}+(g^{-1})^{tk}\barnabla_{k}\barE_{i}=\sigma^\barH_i,
			\end{split}
		\end{align}
		where $\sigma^\barH_i$ depends on $e_{I}^{\mu}$, $D e_{I}^{\mu}$, $\Gamma_{IJ}^{K}$, $\hatTheta$, $D\hatTheta$, $F_{IJ}$, and $\partial_tF_{IJ}$. Finally using equation $\nabla_{[\alpha}F_{\beta\gamma]}=I_{\alpha\beta\gamma}$  with $\alpha\beta\gamma=tij$, similar considerations as above yield
		\begin{align}\label{eq: curl barE pre}
			\begin{split}
				\barnabla_{i}\barE_{j}-\barnabla_{j}\barE_{i}=\sigma^\barE_{ij},
			\end{split}
		\end{align}
		where $\sigma^\barE_{ij}$ depends on $e_{I}^{\mu}$, $D e_{I}^{\mu}$, $\Gamma_{IJ}^{K}$, $\hatTheta$, $D\hatTheta$, $F_{IJ}$, and $\partial_tF_{IJ}$.
		
		In order to use equations \eqref{eq: div barE 3}, \eqref{eq: div H 2}, \eqref{eq: curl H 1}, and \eqref{eq: curl barE pre}  to prove elliptic estimates for the curvature, we first need some preliminary linear estimates for these systems. First we prove a positivity property of the tensor $\gamma^{ij}:=(g^{-1})^{ij}$.
		\begin{lemma}\label{lem: gamma positivity}
			As a symmetric operator acting on the cotangent bundle of the Riemannian manifold $(M:=\{x^{0}=\textrm{constant}\},\barg)$, $\gamma$ is positively definite.
		\end{lemma}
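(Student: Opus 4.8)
The plan is to reduce the whole statement to the single pointwise fact that the coordinate vector field $\partial_t = \hatV$ is timelike, after which positive definiteness of $\gamma^{ij} = (g^{-1})^{ij}$ is a piece of elementary Lorentzian linear algebra. Concretely, I would fix a point $p\in M$ and a covector $\xi=(\xi_1,\xi_2,\xi_3)\in T_p^*M$, and extend it to a spacetime covector $\hat\xi\in T_p^*(\Omega\cup\Omega^c)$ by declaring $\hat\xi_0=0$, $\hat\xi_i=\xi_i$. Since only the spatial components of $\hat\xi$ are nonzero,
\begin{align*}
\gamma^{ij}\xi_i\xi_j = (g^{-1})^{\mu\nu}\hat\xi_\mu\hat\xi_\nu =: g^{-1}(\hat\xi,\hat\xi),
\end{align*}
so it suffices to prove $g^{-1}(\hat\xi,\hat\xi)>0$ whenever $\xi\neq 0$.

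For the main step: since $(\partial_t)^\mu=\delta^\mu_0$ and $\hat\xi_0=0$, the covector $\hat\xi$ annihilates $\partial_t$, i.e. $\hat\xi(\partial_t)=\hat\xi_0=0$. Passing to the $g$-dual vector $\hat\xi^\sharp$, $(\hat\xi^\sharp)^\mu=(g^{-1})^{\mu\nu}\hat\xi_\nu$, this reads $g(\hat\xi^\sharp,\partial_t)=0$, so $\hat\xi^\sharp$ lies in the $g$-orthogonal complement of $\partial_t=\hatV$. Granting that $\hatV$ is timelike, this orthogonal complement is a spacelike hyperplane on which $g$ is positive definite, hence
\begin{align*}
g^{-1}(\hat\xi,\hat\xi) = g(\hat\xi^\sharp,\hat\xi^\sharp) \ge 0,
\end{align*}
with equality only if $\hat\xi^\sharp=0$, equivalently $\hat\xi=0$, equivalently $\xi=0$. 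This gives positive definiteness, and symmetry of $\gamma$ is inherited from that of $g^{-1}$. An equivalent route avoiding duality is to note that the $3\times 3$ matrix $(g^{-1})^{ij}$ is the inverse of the Schur complement $g_{ij}-g_{00}^{-1}g_{i0}g_{j0}$ of $g_{00}$ in the Lorentzian matrix $(g_{\mu\nu})$, and Haynsworth inertia additivity forces this Schur complement to be positive definite as soon as $g_{00}<0$.

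The remaining point — and the only one requiring anything beyond linear algebra — is that $\partial_t=\hatV$ is timelike at every point of $M$, equivalently $g_{00}<0$. In the fluid region $\Omega$ this is automatic from the Lagrangian gauge, since there $\hatV=u$ is the future-directed unit timelike fluid velocity, so $\hatTheta_I\hatTheta^I=-1$ and $g_{00}=-1$. In the exterior $\Omega^c$ it holds at $t=0$ by construction of the data, cf. \eqref{g00 data}, and it persists on $[0,T]$ for $T$ small: under the bootstrap bound $\calE_\ell(T)\le C_1$ of Proposition~\ref{prop:apriori}, together with the initial smallness of $e_I-\partial_I$ and continuity in $t$, the vector $\partial_t$ stays within a small $g$-ball of the unit timelike frame vector $e_0$ and is therefore timelike. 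I expect this propagation of timelikeness of $\partial_t$ in $\Omega^c$ to be the only mildly delicate ingredient; everything else is the bookkeeping indicated above.
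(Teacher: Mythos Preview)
Your proof is correct and takes a genuinely different route from the paper's. The paper fixes a point, chooses normal coordinates for $\barg$ so that the spacetime metric has the block form $g=\begin{pmatrix}a_0 & a^T\\ a & I\end{pmatrix}$ with $a_0<0$, explicitly inverts this matrix, and reads off that $\gamma = I + (a_0-|a|^2)^{-1}aa^T$ has eigenvalues $1,1,a_0/(a_0-|a|^2)>0$. Your argument instead observes that $\gamma^{ij}\xi_i\xi_j = g(\hat\xi^\sharp,\hat\xi^\sharp)$ with $\hat\xi^\sharp$ lying in the $g$-orthogonal complement of the timelike vector $\partial_t$, which is spacelike; this is cleaner and coordinate-free, and your Schur-complement remark is exactly the linear-algebraic shadow of the paper's explicit inversion. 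Both proofs ultimately rest on the single input $g_{00}<0$; the paper dispatches this in one clause (``by continuity we must have $a_0<-c<0$ during the evolution''), while you are more careful in separating the fluid region, where $g_{00}=-1$ is a gauge identity, from the exterior, where it is a bootstrap consequence. What your approach buys is transparency and no computation; what the paper's buys is an explicit lower eigenvalue bound $a_0/(a_0-|a|^2)$, though the paper does not actually use this quantitative information downstream.
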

		\begin{proof}
			We only need to check the positive definiteness at any given point $p\in M$. Since $\gamma$ is a tensor we can work in normal coordinates $(z^1,z^2,z^3)$ near $p$, so that at $p$
			\begin{align*}
				\begin{split}
					\barg_{ab}=\delta_{ab},\qquad (\barg^{-1})^{ab}=\delta^{ab}.
				\end{split}
			\end{align*} 
			In particular if $\xi=(\xi_1,\xi_2,\xi_3)$ is any cotangent vector at $p$ then
			\begin{align*}
				\begin{split}
					\barg^{-1}(\xi,\xi)=|\xi|^2:=\sum\xi_j^2.
				\end{split}
			\end{align*}
			Therefor in coordinates $(x^0,z^1,z^2,z^3)$ the metric $g$ looks like
			\begin{align*}
				\begin{split}
					g=\pmat{a_0&a_1&a_2&a_3\\a_1&1&0&0\\a_2&0&1&0\\a_3&0&0&1},
				\end{split}
			\end{align*}
			where $a_0<0$ and in fact by continuity we must have $a_0<-c<0$ during the evolution for some $c>0$. By computation, the inverse matrix is
			\begin{align*}
				\begin{split}
					g^{-1}=\frac{1}{a_0-\sum a_j^2}\pmat{1&-a_1&-a_2&-a_3\\-a_1&a_0-\sum_{i\neq1}a_i^2&a_1a_2&a_1a_3\\-a_2&a_2a_1&a_0-\sum_{i\neq2}a_i^2&a_2a_3\\-a_3&a_3a_1&a_3a_2&a_0-\sum_{i\neq3}a_i^2}.
				\end{split}
			\end{align*}
			It follows that
			\begin{align*}
				\begin{split}
					\gamma=I+\frac{1}{a_{0}-\sum a_j^2}\underbrace{\pmat{a_1\\a_2\\a_3}\pmat{a_1&a_2&a_3}}_{K}.
				\end{split}
			\end{align*}
			The matrix $K$ has two zero eigenvalues and a non-zero eigenvalue $\sum_{i}a_{i}^{2}$, so the result follows.
		\end{proof}
		\begin{lemma}\label{prop: H1 div curl sys 1}
			Let $\Theta$ be a $1$-form on $\Sigma_{t}$ satisfying (weakly)
			\begin{align*}
				\barnabla_{i}\Theta_{j}-\barnabla_{j}\Theta_{i}=\sigma_{ij},\quad \gamma^{ij}\barnabla_{i}\Theta_{j}=\rho.
			\end{align*}
			Then
			\begin{align*}
				\left\|\barnabla\Theta\right\|_{L^{2}(\Sigma_{t})}\lesssim \left\|\Theta\right\|_{L^{2}(\Sigma_{t})}+ \left\|\rho\right\|_{L^{2}(\Sigma_{t})}+\left\|\sigma\right\|_{L^{2}(\Sigma_{t})}.
			\end{align*}
			Here the $L^{2}$-norm is with respect to the tensor $\gamma^{ij}=(g^{-1})^{ij}$, namely,
			\begin{align*}
				\left|\barnabla\Theta\right|^{2}:=\gamma^{aa'}\gamma^{bb'}\barnabla_{a}\Theta_{b}\barnabla_{a'}\Theta_{b'},\quad \left|\Theta\right|^{2}:=\gamma^{ab}\Theta_{a}\Theta_{b},\quad \left|\sigma\right|^{2}:=\gamma^{aa'}\gamma^{bb'}\sigma_{ab}\sigma_{a'b'}.
			\end{align*}
			The implicit constant in the estimate depends on the $L^{\infty}$-norm of $e_{I}^{\mu}, D e_{I}^{\mu}$ and the curvature tensor $R$.
		\end{lemma}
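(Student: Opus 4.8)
By Lemma~\ref{lem: gamma positivity} the tensor $\gamma^{ij}=(g^{-1})^{ij}$ is positive definite, and its proof in fact shows that the associated metric on $\Sigma_t$ is uniformly comparable to the induced metric $\barg$, with comparability constants controlled by the $L^\infty$ norm of the frame $e_I^\mu$. Consequently all the $L^2$ norms in the statement may, up to a controlled factor, be computed with respect to $\barg$ and its volume element $\ud\mu_{\barg}$, with all index contractions taken with $\gamma$; I will do so throughout. The plan is to run the classical Bochner / div--curl argument on $(\Sigma_t,\barg)$, treating $\gamma^{ij}$ as a bounded positive-definite coefficient matrix rather than as a metric, so that the only connection and curvature entering the computation are those of $\barg$.

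The starting point is the pointwise identity, valid for any $1$-form $\Theta$,
\begin{align*}
\gamma^{aa'}\gamma^{bb'}\barnabla_a\Theta_b\,\barnabla_{a'}\Theta_{b'}=\gamma^{aa'}\gamma^{bb'}\barnabla_a\Theta_b\,\barnabla_{b'}\Theta_{a'}+\tfrac12\,\gamma^{aa'}\gamma^{bb'}\sigma_{ab}\sigma_{a'b'},
\end{align*}
obtained by subtracting the two contractions, using the first hypothesis $\barnabla_a\Theta_b-\barnabla_b\Theta_a=\sigma_{ab}$ and the antisymmetry of $\sigma$. Integrating over $\Sigma_t$ against $\ud\mu_{\barg}$, the last term is $\tfrac12\|\sigma\|_{L^2(\Sigma_t)}^2$, so it suffices to bound $\int_{\Sigma_t}\gamma^{aa'}\gamma^{bb'}\barnabla_a\Theta_b\,\barnabla_{b'}\Theta_{a'}\,\ud\mu_{\barg}$. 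Since $\Sigma_t\cong\bbR^3$ has no boundary, the equations are satisfied weakly, and $\Theta\in H^1(\Sigma_t)$, one may integrate by parts in $\barnabla_a$ (the divergence theorem on $(\Sigma_t,\barg)$ carries no boundary contribution and the integral converges by the decay at spatial infinity). The derivatives that fall on the coefficients $\gamma^{aa'}$, $\gamma^{bb'}$ produce terms bounded by $\|\barnabla\gamma\|_{L^\infty}\,\|\Theta\|_{L^2}\,\|\barnabla\Theta\|_{L^2}$, with $\|\barnabla\gamma\|_{L^\infty}$ controlled by the $L^\infty$ norm of the frame and its first derivatives. The remaining term is $-\int_{\Sigma_t}\gamma^{aa'}\gamma^{bb'}\Theta_b\,\barnabla_a\barnabla_{b'}\Theta_{a'}\,\ud\mu_{\barg}$; commuting $\barnabla_a\barnabla_{b'}=\barnabla_{b'}\barnabla_a+[\barnabla_a,\barnabla_{b'}]$ via the Ricci identity of $\barg$ produces a curvature term of size $O(\|\barR\|_{L^\infty}\|\Theta\|_{L^2}^2)$, while the leading part, after using the second hypothesis $\gamma^{aa'}\barnabla_a\Theta_{a'}=\rho$ and one further integration by parts, equals $\int_{\Sigma_t}\rho^2\,\ud\mu_{\barg}$ up to errors $O(\|\barnabla\gamma\|_{L^\infty}\|\Theta\|_{L^2}\|\barnabla\Theta\|_{L^2})+O(\|\rho\|_{L^2}\|\Theta\|_{L^2})$.

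Collecting these identities and absorbing the terms $\|\Theta\|_{L^2}\|\barnabla\Theta\|_{L^2}$ into the left-hand side by Cauchy--Schwarz with a small parameter yields
\begin{align*}
\|\barnabla\Theta\|_{L^2(\Sigma_t)}^2\le C\big(\|\sigma\|_{L^2(\Sigma_t)}^2+\|\rho\|_{L^2(\Sigma_t)}^2+\|\Theta\|_{L^2(\Sigma_t)}^2\big),
\end{align*}
which is the claimed estimate after taking square roots. The step that requires genuine care --- and which I expect to be the main obstacle --- is the bookkeeping of the error constants: one must verify that $\|\barnabla\gamma\|_{L^\infty}$ and $\|\barR\|_{L^\infty}$ are controlled by $\|R\|_{L^\infty}$ together with $\|e\|_{L^\infty}$, $\|De\|_{L^\infty}$ and the quantities already bootstrapped in Proposition~\ref{prop:apriori}. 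For $\barnabla\gamma$ this follows from $\gamma^{ij}=-e_0^ie_0^j+\sum_{\tilI}e_{\tilI}^ie_{\tilI}^j$ and the coordinate formula for the Christoffel symbols of $\barg$; for $\barR$ one invokes the Gauss equation, so that $\|\barR\|_{L^\infty}$ is bounded by $\|R\|_{L^\infty}$ and the square of the second fundamental form of $\Sigma_t$, the latter being controlled through \eqref{eq:etransport1}, which expresses $\partial_t e$ --- hence $\partial_t g$ --- in terms of $\hatTheta$, $D\hatTheta$, $\Gamma$ and $e$. No derivatives of the unknowns beyond those built into the hypotheses are needed, so the estimate closes.
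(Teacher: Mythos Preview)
Your argument is correct and follows essentially the same Bochner--type div--curl computation as the paper: rewrite $|\barnabla\Theta|^2$ as the ``transposed'' contraction $\gamma^{aa'}\gamma^{bb'}\barnabla_a\Theta_b\,\barnabla_{b'}\Theta_{a'}$ plus a $\sigma$ term, integrate by parts in $\barnabla_a$, commute $\barnabla_a\barnabla_{b'}$ via the Ricci identity of $\barg$, and use $\gamma^{aa'}\barnabla_a\Theta_{a'}=\rho$ to produce $\rho^2$. The only cosmetic difference is that you simplify the cross term to $\tfrac12|\sigma|^2$ at the outset (the paper leaves it as $\gamma^{aa'}\gamma^{bb'}\barnabla_a\Theta_b\,\sigma_{a'b'}$ and bounds it by Cauchy--Schwarz), and you add a short justification that $\|\barnabla\gamma\|_{L^\infty}$ and $\|\barR\|_{L^\infty}$ are controlled by $e,De,R$ via the Gauss equation, which the paper simply asserts in the statement of the lemma.
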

		\begin{proof}
			We carry out the proof assuming $\Theta$ is more regular, and the general case follows from the resulting estimate and a standard approximation procedure.	We start with
			\begin{align}\label{barnabla Theta magnitude pre}
				\begin{split}
					\left|\barnabla\Theta\right|^{2}=&\gamma^{aa'}\gamma^{bb'}\barnabla_{a}\Theta_{b}\barnabla_{a'}\Theta_{b'}\\
					=&\gamma^{aa'}\gamma^{bb'}\barnabla_{a}\Theta_{b}\barnabla_{b'}\Theta_{a'}+\gamma^{aa'}\gamma^{bb'}\barnabla_{a}\Theta_{b}\,\sigma_{a'b'}\\
					=&\barnabla_{a}\left(\gamma^{aa'}\gamma^{bb'}\Theta_{b}\barnabla_{b'}\Theta_{a'}\right)-\gamma^{aa'}\gamma^{bb'}\Theta_{b}\barnabla_{a}\barnabla_{b'}\Theta_{a'}\\
					&-\barnabla_{a}\left(\gamma^{aa'}\gamma^{bb'}\right)\Theta_{b}\barnabla_{b'}\Theta_{a'}+\gamma^{aa'}\gamma^{bb'}\barnabla_{a}\Theta_{b}\,\sigma_{a'b'}
				\end{split}
			\end{align}
			The second term on the right-hand side in \eqref{barnabla Theta magnitude pre} can be written as
			\begin{align*}
				-\gamma^{aa'}\gamma^{bb'}\Theta_{b}\barnabla_{a}\barnabla_{b'}\Theta_{a'}=&-\gamma^{aa'}\gamma^{bb'}\Theta_{b}\barnabla_{b'}\barnabla_{a}\Theta_{a'}-\gamma^{aa'}\gamma^{bb'}\Theta_{b}\barR_{ab'a'}{}^{c}\Theta_{c}\\
				=&-\barnabla_{b'}\left(\gamma^{aa'}\gamma^{bb'}\Theta_{b}\barnabla_{a}\Theta_{a'}\right)+\barnabla_{b'}\left(\gamma^{aa'}\gamma^{bb'}\right)\Theta_{b}\barnabla_{a}\Theta_{a'}\\
				&+\rho^{2}-\gamma^{aa'}\gamma^{bb'}\Theta_{b}\barR_{ab'a'}{}^{c}\Theta_{c}.
			\end{align*}
			Here $\barR$ is the Riemannian curvature tensor of the induced metric on $\Sigma_{t}$. So the identity \eqref{barnabla Theta magnitude pre} is rewritten as
			\begin{align}\label{barnabla Theta magnitude 1}
				\begin{split}
					\left|\barnabla\Theta\right|^{2}=&\barnabla_{a}\left(\gamma^{aa'}\gamma^{bb'}\Theta_{b}\barnabla_{b'}\Theta_{a'}\right)-\barnabla_{b'}\left(\gamma^{aa'}\gamma^{bb'}\Theta_{b}\barnabla_{a}\Theta_{a'}\right)\\&+\barnabla_{b'}\left(\gamma^{aa'}\gamma^{bb'}\right)\Theta_{b}\barnabla_{a}\Theta_{a'}+\rho^{2}-\gamma^{aa'}\gamma^{bb'}\Theta_{b}\barR_{ab'a'}{}^{c}\Theta_{c}\\
					&-\barnabla_{a}\left(\gamma^{aa'}\gamma^{bb'}\right)\Theta_{b}\barnabla_{b'}\Theta_{a'}+\gamma^{aa'}\gamma^{bb'}\barnabla_{a}\Theta_{b}\,\sigma_{a'b'}
				\end{split}
			\end{align}
			Then the desired estimate follows by integrating the above identity on $\Sigma_{t}$ and Cauchy-Schwarz.
		\end{proof}
		\begin{lemma}\label{prop: H1 div curl sys 2}
			Let $X$ be vectorfield on $\Sigma_{t}$ satisfying (weakly)
			\begin{align*}
				\barnabla_{i}X^{i}=\rho,\quad \gamma^{jk}\epsilon_{ij\ell}\barnabla_{k}X^{\ell}=\sigma_{i}.
			\end{align*}
			Then
			\begin{align*}
				\left\|\barnabla X\right\|_{L^{2}(\Sigma_{t})}\lesssim \left\|X\right\|_{L^{2}(\Sigma_{t})}+\left\|\rho\right\|_{L^{2}(\Sigma_{t})}+\left\|\sigma\right\|_{L^{2}(\Sigma_{t})}.
			\end{align*}
			Here we define
			\begin{align*}
				\left|\barnabla X\right|^{2}:=\gamma^{aa'}\gamma^{bb'}\barnabla_{a}X_{b}\barnabla_{a'}X_{b'},\quad \left|X\right|^{2}:=\gamma^{ab}X_{a}X_{b},\quad \left|\sigma\right|^{2}:=g_{aa'}g_{bb'}\sigma^{ab}\sigma^{a'b'},
			\end{align*}
			where $X_{a}$ is defined such that $X^{b}=\gamma^{ab}X_{a}$, and $\sigma^{ab}$ is anti-symmetric and defined such that $\sigma_{i}=\epsilon_{iab}\sigma^{ab}$. The implicit constant in the estimate depends on the same quantities as the one in Lemma \ref{prop: H1 div curl sys 1}.
		\end{lemma}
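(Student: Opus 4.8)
The plan is to prove this estimate by the same Bochner-type integration-by-parts scheme used for Lemma~\ref{prop: H1 div curl sys 1}, now in its ``dual'' incarnation for a vector field constrained by its divergence and (weighted) curl. As a first step I would reduce, by a standard mollification and cutoff argument, to the case where $X$ is smooth and decays at infinity, so that all integrations by parts below are justified and the total-divergence terms integrate to zero over $\Sigma_{t}$ (recall $\Sigma_{t}=\Omega_{t}\cup\Omega_{t}^{c}$ is diffeomorphic to $\bbR^{3}$ and has no boundary). It is convenient to work with the one-form $X_{a}:=\gamma_{ab}X^{b}$, where $\gamma_{ab}$ is the inverse of $\gamma^{ab}$. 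Because $\barnabla$ is \emph{not} compatible with $\gamma$, rewriting the hypotheses in terms of $X_{a}$ introduces correction terms built from $\barnabla\gamma$:
\[
\gamma^{ij}\barnabla_{i}X_{j}=\rho-(\barnabla_{i}\gamma^{ij})X_{j}=:\tilde\rho,\qquad \gamma^{jk}\gamma^{\ell m}\epsilon_{ij\ell}\barnabla_{k}X_{m}=\sigma_{i}-\gamma^{jk}\epsilon_{ij\ell}(\barnabla_{k}\gamma^{\ell m})X_{m}=:\tilde\sigma_{i}.
\]
By \eqref{eq:g1}, $\barnabla\gamma$ is expressible through $e_{I}^{\mu}$ and $D e_{I}^{\mu}$ and is hence bounded, so $\|\tilde\rho\|_{L^{2}(\Sigma_{t})}\lesssim\|\rho\|_{L^{2}(\Sigma_{t})}+\|X\|_{L^{2}(\Sigma_{t})}$ and likewise for $\tilde\sigma$.

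Next I would carry out the computation parallel to \eqref{barnabla Theta magnitude pre}--\eqref{barnabla Theta magnitude 1}. Expanding $|\barnabla X|^{2}=\gamma^{aa'}\gamma^{bb'}\barnabla_{a}X_{b}\,\barnabla_{a'}X_{b'}$ and writing $\barnabla_{a'}X_{b'}=\barnabla_{b'}X_{a'}+(\barnabla_{a'}X_{b'}-\barnabla_{b'}X_{a'})$, the antisymmetric piece is a $2$-form, and in three dimensions it is a bounded-invertible linear image of the (corrected) curl $\tilde\sigma$; hence, after inverting the $\gamma$- and $\epsilon$-contractions, its contribution is pointwise bounded by $|\tilde\sigma|$. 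For the remaining piece $\gamma^{aa'}\gamma^{bb'}\barnabla_{a}X_{b}\,\barnabla_{b'}X_{a'}$ I would integrate by parts twice exactly as in the proof of Lemma~\ref{prop: H1 div curl sys 1}: the first integration moves a derivative off $X_{b}$ and generates a total divergence plus a term $-\barnabla_{a}(\gamma^{aa'}\gamma^{bb'})X_{b}\barnabla_{b'}X_{a'}$; commuting the two covariant derivatives produces a curvature term $\gamma^{aa'}\gamma^{bb'}X_{b}\,\barR_{ab'a'}{}^{c}X_{c}$, with $\barR$ the Riemann tensor of the induced metric on $\Sigma_{t}$; and the second integration produces another total divergence together with a $\tilde\rho^{\,2}$ term coming from the divergence constraint $\gamma^{ij}\barnabla_{i}X_{j}=\tilde\rho$ and further terms of the schematic shape $(\barnabla\gamma)\,X\,\barnabla X$. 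Integrating the resulting pointwise identity over $\Sigma_{t}$ annihilates the total divergences, and then Cauchy--Schwarz---using a small parameter to absorb the $\epsilon\|\barnabla X\|_{L^{2}}^{2}$ contributions coming from the $(\barnabla\gamma)X\barnabla X$ and $\barnabla X\cdot\tilde\sigma$ terms back into the left-hand side---gives
\[
\|\barnabla X\|_{L^{2}(\Sigma_{t})}^{2}\lesssim \big(1+\|\barR\|_{L^{\infty}}+\|\barnabla\gamma\|_{L^{\infty}}^{2}\big)\|X\|_{L^{2}(\Sigma_{t})}^{2}+\|\rho\|_{L^{2}(\Sigma_{t})}^{2}+\|\sigma\|_{L^{2}(\Sigma_{t})}^{2}.
\]
The absorption step is legitimate because, by Lemma~\ref{lem: gamma positivity}, $\gamma^{ij}$ is positive definite, so the quadratic form $|\barnabla X|^{2}$ on the left is pointwise comparable to the genuine squared covariant gradient of $X$.

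It remains to convert the right-hand side into the stated form. The curvature term is treated exactly as in Lemma~\ref{prop: H1 div curl sys 1}: by the Gauss equation $\barR$ is quadratic in the second fundamental form of $\Sigma_{t}$ and linear in the spacetime curvature $R$, and the second fundamental form is in turn expressible through $e_{I}^{\mu}$ and $D e_{I}^{\mu}$; since $\barnabla\gamma$ is likewise controlled by $e_{I}^{\mu},D e_{I}^{\mu}$, the implicit constant depends only on the $L^{\infty}$-norms of $e_{I}^{\mu}$, $D e_{I}^{\mu}$, and $R$, as claimed. I expect the only real difficulty to be the bookkeeping: because $\barnabla$ does not annihilate $\gamma$, one must check, term by term, that every error produced in passing between $X^{i}$ and $X_{i}$, in rewriting the two constraints, and in the two integrations by parts is either a total divergence or is of strictly lower order (at most one derivative of $X$, with that derivative absorbable by Cauchy--Schwarz). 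Everything else is routine and follows the template already established for Lemma~\ref{prop: H1 div curl sys 1}.
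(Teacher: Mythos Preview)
Your proposal is correct and follows essentially the same approach as the paper. Both proofs reduce to the smooth case, rewrite the constraints in terms of $X_a=\gamma_{ab}X^b$ with $\barnabla\gamma$ corrections, use the curl constraint to swap indices in $|\barnabla X|^2$ (you phrase this as splitting $\barnabla_{a'}X_{b'}$ into its symmetric and antisymmetric parts and inverting the $\epsilon$--$\gamma$ map, while the paper rewrites the curl equation directly as $\gamma^{jk}\gamma^{\ell m}\barnabla_k X_m-\gamma^{\ell k}\gamma^{jm}\barnabla_k X_m=\sigma'^{j\ell}$; these are the same manipulation), and then integrate by parts twice on the swapped term $\gamma^{ba'}\gamma^{ab'}\barnabla_a X_b\barnabla_{a'}X_{b'}$ exactly as in Lemma~\ref{prop: H1 div curl sys 1}, picking up the $\barR$ commutator and the $\rho^2$ term before closing with Cauchy--Schwarz.
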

		\begin{proof}
			As in the proof of Proposition \ref{prop: H1 div curl sys 1} we work under the assumption that $X$ is smooth. We rewrite the system as
			\begin{align*}
				\gamma^{ij}\barnabla_{i}X_{j}=\rho-X_{j}\barnabla_{i}\gamma^{ij},\quad \gamma^{jk}\gamma^{\ell m}\epsilon_{ij\ell}\barnabla_{k}X_{m}=\sigma_{i}-\gamma^{jk}\epsilon_{ij\ell}X_{m}\barnabla_{k}\gamma^{\ell m}.
			\end{align*}
			The second equation above can be further manipulated into
			\begin{align*}
				\gamma^{jk}\gamma^{\ell m}\barnabla_{k}X_{m}-\gamma^{\ell k}\gamma^{jm}\barnabla_{k}X_{m}=\sigma^{j\ell}-\gamma^{jk}\barnabla_{k}\gamma^{\ell m}\,X_{m}+\gamma^{\ell k}\barnabla_{k}\gamma^{jm}\,X_{m}=:\sigma^{\prime j\ell}.
			\end{align*}
			Now we are in a position to mimic the proof of Proposition \ref{prop: H1 div curl sys 1}. We consider
			\begin{align}\label{barnabla X magnitude pre}
				\left|\barnabla X\right|^{2}=&\gamma^{aa'}\gamma^{bb'}\barnabla_{a}X_{b}\barnabla_{a'}X_{b'}
				=\gamma^{ba'}\gamma^{ab'}\barnabla_{a}X_{b}\barnabla_{a'}X_{b'}+\barnabla_{a}X_{b}\,\sigma^{\prime ab}.
			\end{align}
			The first term on the right-hand side of \eqref{barnabla X magnitude pre} can be rewritten as
			\begin{align}\label{barnabla X magnitude 1}
				\gamma^{ba'}\gamma^{ab'}\barnabla_{a}X_{b}\barnabla_{a'}X_{b'}=&\nabla_{a}\left(\gamma^{ba'}\gamma^{ab'}X_{b}\barnabla_{a'}X_{b'}\right)-\barnabla_{a}\left(\gamma^{ba'}\gamma^{ab'}\right)X_{b}\barnabla_{a'}X_{b'}-\gamma^{ba'}\gamma^{ab'}X_{b}\barnabla_{a}\barnabla_{a'}X_{b'}.
			\end{align}
			The last term on the right-hand side of \eqref{barnabla X magnitude 1} can be manipulated into
			\begin{align*}
				-\gamma^{ba'}\gamma^{ab'}X_{b}\barnabla_{a}\barnabla_{a'}X_{b'}=&-\gamma^{ba'}\gamma^{ab'}X_{b}\barnabla_{a'}\barnabla_{a}X_{b'}-\gamma^{ba'}\gamma^{ab'}X_{b}\barR_{aa'b'}{}^{c}X_{c}\\
				=&-\barnabla_{a'}\left(\gamma^{ba'}\gamma^{ab'}X_{b}\barnabla_{a}X_{b'}\right)+\barnabla_{a'}\left(\gamma^{ba'}\gamma^{ab'}\right)X_{b}\barnabla_{a}X_{b'}\\
				&+\gamma^{ba'}\gamma^{ab'}\barnabla_{a'}X_{b}\barnabla_{a}X_{b'}-\gamma^{ba'}\gamma^{ab'}X_{b}\barR_{aa'b'}{}^{c}X_{c}.
			\end{align*}
			It follows that
			\begin{align}\label{barnabla X magnitude 2}
				\begin{split}
					\left|\barnabla X\right|^{2}=&\nabla_{a}\left(\gamma^{ba'}\gamma^{ab'}X_{b}\barnabla_{a'}X_{b'}\right)-\barnabla_{a}\left(\gamma^{ba'}\gamma^{ab'}\right)X_{b}\barnabla_{a'}X_{b'}\\
					&-\barnabla_{a'}\left(\gamma^{ba'}\gamma^{ab'}X_{b}\barnabla_{a}X_{b'}\right)+\barnabla_{a'}\left(\gamma^{ba'}\gamma^{ab'}\right)X_{b}\barnabla_{a}X_{b'}\\
					&+\rho^{2}-\gamma^{ba'}\gamma^{ab'}X_{b}\barR_{aa'b'}{}^{c}X_{c}+\barnabla_{a}X_{b}\,\sigma^{\prime ab}.
				\end{split}
			\end{align}
			The desired estimate now follows from integrating this identity and using Cauchy-Schwarz.
		\end{proof}
We next state the linear estimates which are needed for proving elliptic estimates for the fluid quantities. A basic ingredient in deriving elliptic estimates for $\partial_t^k\Theta$ and $\partial_t^{k+1}\sigma^2$ is the decomposition of the scalar wave operator into an elliptic part and a part involving $\partial_t$ derivatives. This is accomplished in the next lemma. Recall that (see \eqref{eq:g1} and \eqref{eq:e0mu1})
\begin{align*}
				\begin{split}
					(g^{-1})^{ij}= (m^{-1})^{IJ}e_I^ie_J^j=\big(\delta^{\tilI\tilJ}-\frac{\hatTheta^\tilI}{\hatTheta^0}\frac{\hatTheta^\tilJ}{\hatTheta^0}\big)e_\tilI^ie_\tilJ^j,
				\end{split}
			\end{align*}
			which is positive definite (see Lemma~\ref{lem: gamma positivity}). Let $\barA$ denote the associated second order elliptic operator:
			\begin{align}\label{eq:barA1}
				\begin{split}
					\barA:=(g^{-1})^{ij}\partial^2_{ij}.
				\end{split}
			\end{align}
			\begin{lemma}\label{lem:BoxA}
				For any function $u$,
				\begin{align*}
					\begin{split}
						\barA u&=\Box u+\bigg(\Big(\frac{1-\hatTheta^\tilI e_\tilI^0}{\hatTheta^0}\Big)^2-\sum_{\tilI}(e_{\tilI}^{0})^{2}\bigg)\partial_t^2u-2\Big(e_\tilI^je_\tilI^0+\frac{(1-\hatTheta^\tilI e_\tilI^0)\hatTheta^\tilJ e_\tilJ^j}{(\hatTheta^0)^2}\Big)\partial_j\partial_tu\\
						&-\epsilon_I(D_Ie_I^\mu)\partial_\mu u+\epsilon_I\Gamma^K_{II}e_K^\mu\partial_\mu u.
					\end{split}
				\end{align*}
			\end{lemma}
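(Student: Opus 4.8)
The plan is to start from the orthonormal‑frame representation of the wave operator recorded in \eqref{eq:Boxframe1}, $\Box u=\sum_I\epsilon_I D_ID_Iu-\sum_I\epsilon_I\Gamma^K_{II}D_Ku$, and to rewrite each term in the Lagrangian coordinates $\{x^\mu\}$. Since $D_Iu=e_I^\mu\partial_\mu u$ on scalars, the chain rule gives $D_ID_Iu=e_I^\mu e_I^\nu\partial^2_{\mu\nu}u+(D_Ie_I^\mu)\partial_\mu u$, hence $\sum_I\epsilon_I D_ID_Iu=\big(\sum_I\epsilon_I e_I^\mu e_I^\nu\big)\partial^2_{\mu\nu}u+\epsilon_I(D_Ie_I^\mu)\partial_\mu u$. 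Invoking the second identity in \eqref{eq:g1}, $\sum_I\epsilon_I e_I^\mu e_I^\nu=(m^{-1})^{IJ}e_I^\mu e_J^\nu=(g^{-1})^{\mu\nu}$, and writing $D_Ku=e_K^\mu\partial_\mu u$ in the last term, we arrive at
\[
\Box u=(g^{-1})^{\mu\nu}\partial^2_{\mu\nu}u+\epsilon_I(D_Ie_I^\mu)\partial_\mu u-\epsilon_I\Gamma^K_{II}e_K^\mu\partial_\mu u.
\]

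Next I would split the principal part according to temporal versus spatial indices: $(g^{-1})^{\mu\nu}\partial^2_{\mu\nu}u=\barA u+(g^{-1})^{00}\partial_t^2u+2(g^{-1})^{0j}\partial_j\partial_tu$, with $\barA$ as in \eqref{eq:barA1}. Solving for $\barA u$ yields the claimed formula once $-(g^{-1})^{00}$ and $-2(g^{-1})^{0j}$ are identified with the stated coefficients, which is a purely algebraic computation. From $(g^{-1})^{\mu\nu}=-e_0^\mu e_0^\nu+\sum_\tilI e_\tilI^\mu e_\tilI^\nu$ together with the algebraic formula \eqref{eq:e0mu1} for $e_0^\mu$, one reads off $e_0^0=\tfrac{1-\hatTheta^\tilI e_\tilI^0}{\hatTheta^0}$ and $e_0^j=-\tfrac{\hatTheta^\tilI e_\tilI^j}{\hatTheta^0}$, whence $-(g^{-1})^{00}=(e_0^0)^2-\sum_\tilI(e_\tilI^0)^2=\big(\tfrac{1-\hatTheta^\tilI e_\tilI^0}{\hatTheta^0}\big)^2-\sum_\tilI(e_\tilI^0)^2$ and $-(g^{-1})^{0j}=e_0^0e_0^j-\sum_\tilI e_\tilI^0e_\tilI^j=-\tfrac{(1-\hatTheta^\tilI e_\tilI^0)\hatTheta^\tilJ e_\tilJ^j}{(\hatTheta^0)^2}-e_\tilI^0e_\tilI^j$. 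Substituting these into the previous display gives exactly the asserted identity.

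I do not anticipate a genuine conceptual obstacle; the only point requiring care is the bookkeeping of signs and of the first‑order (single‑derivative) terms: one must keep the term in the form $\epsilon_I(D_Ie_I^\mu)\partial_\mu u$, i.e.\ expressed through the coordinate derivative of the frame components contracted so as to produce the frame‑covariant directional derivative $D_I e_I^\mu$, rather than re‑expanding it, and one must track correctly which Einstein summations run over $I\in\{0,\dots,3\}$ and which over $\tilI\in\{1,2,3\}$ when passing from the covariant expression $(g^{-1})^{\mu\nu}$ to the explicit $\hatTheta$‑dependent coefficients.
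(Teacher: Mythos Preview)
Your proposal is correct and follows exactly the route indicated in the paper's own proof: expand the frame representation \eqref{eq:Boxframe1} of $\Box$ in coordinates, recognize the principal part as $(g^{-1})^{\mu\nu}\partial^2_{\mu\nu}u$ via \eqref{eq:g1}, and then compute $(g^{-1})^{00}$ and $(g^{-1})^{0j}$ explicitly from \eqref{eq:e0mu1}. The paper's proof is simply the one-line summary of what you have written out in detail.
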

			\begin{proof}
				This follows from \eqref{eq:Boxframe1}, and by expanding $e_I^\mu= e_I^\mu \partial_\mu$ where for $I=0$ we use \eqref{eq:e0mu1}. 
			\end{proof}
			We also recall the following the following standard results from elliptic theory without proof (see \cite{Taylor-book1}).
			\begin{lemma}\label{lem:2nd order elliptic esti}
				With $\barA$ as in \eqref{eq:barA1} and any scalar function $u$,
				\begin{align}\label{eq:ellipticlemDir1}
					\|\partial_x^2u\|_{L^{2}(\Omega_{t})}\lesssim \|\barA u\|_{L^{2}(\Omega_{t})}+\|u\|_{H^{\frac32}(\partial\Omega_{t})},
				\end{align}
				and 
				\begin{align}\label{eq:ellipticlemNeu1}
					\|\partial_x^2u\|_{L^{2}(\Omega_{t})}\lesssim \|\barA u\|_{L^{2}(\Omega_{t})}+\|D_N u\|_{H^{\frac12}(\partial\Omega_{t})},
				\end{align}
				where $N$ is a transversal vectorfield to $\partial\Omega_{t}\subset\Omega_{t}$ and the implicit constants depend on $\Omega_{t}$.
			\end{lemma}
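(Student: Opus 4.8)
The plan is to obtain both \eqref{eq:ellipticlemDir1} and \eqref{eq:ellipticlemNeu1} from the classical $W^{2,2}$ a priori theory for a second-order uniformly elliptic operator in non-divergence form; the only real work is to check that $\barA$ and the slice $\Omega_t$ verify the hypotheses of that theory with constants uniform in $t\in[0,T)$. First I would record uniform ellipticity: by Lemma~\ref{lem: gamma positivity} the symmetric matrix $\gamma^{ij}:=(g^{-1})^{ij}$ is positive definite, and combined with the standing bounds on $e_I^\mu$ and $\hatTheta^I$ (which are controlled in high Sobolev norms, hence in $W^{1,\infty}$ and better) this produces constants $0<\lambda\le\Lambda$, independent of $t$, with $\lambda|\xi|^2\le\gamma^{ij}\xi_i\xi_j\le\Lambda|\xi|^2$ and $\|\gamma^{ij}\|_{W^{1,\infty}(\Omega_t)}\le\Lambda$. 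Moreover $\Omega_t$ is diffeomorphic to the unit ball, and its boundary — a graph in the Lagrangian coordinates described by the frame and $\Theta$ — has regularity uniform on $[0,T)$ by continuity in time and the shortness of the interval, so the boundary-flattening maps used below are uniformly controlled.

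Next I would reduce to homogeneous boundary data. For \eqref{eq:ellipticlemDir1}, set $g:=u|_{\partial\Omega_t}\in H^{3/2}(\partial\Omega_t)$, pick a bounded right inverse of the trace to obtain $\tilde u\in H^2(\Omega_t)$ with $\tilde u|_{\partial\Omega_t}=g$ and $\|\tilde u\|_{H^2(\Omega_t)}\lesssim\|g\|_{H^{3/2}(\partial\Omega_t)}$, and note that $w:=u-\tilde u\in H^1_0(\Omega_t)$ obeys $\|\barA w\|_{L^2(\Omega_t)}\lesssim\|\barA u\|_{L^2(\Omega_t)}+\|g\|_{H^{3/2}(\partial\Omega_t)}$; it then suffices to prove $\|\partial_x^2 w\|_{L^2(\Omega_t)}\lesssim\|\barA w\|_{L^2(\Omega_t)}$. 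For \eqref{eq:ellipticlemNeu1} one instead subtracts an $H^2(\Omega_t)$ extension of a function whose $N$-derivative on $\partial\Omega_t$ equals $D_N u$, reducing to $w$ with $D_N w|_{\partial\Omega_t}=0$, and one uses that both sides of \eqref{eq:ellipticlemNeu1} are unchanged under $u\mapsto u+\mathrm{const}$, which disposes of the one-dimensional kernel of the oblique-derivative problem and explains why no lower-order norm of $u$ enters the right-hand side.

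Finally I would invoke the standard interior and boundary $H^2$ estimates for the homogeneous problems: in the interior, Nirenberg's difference-quotient argument applied to $\barA w=F$ after localization; near $\partial\Omega_t$, flattening the boundary, bounding all second derivatives other than the doubly-normal one by tangential difference quotients, and recovering the doubly-normal derivative algebraically from the equation by dividing by its coefficient, which is $\gtrsim\lambda$ by ellipticity. The resulting bound $\|\partial_x^2 w\|_{L^2}\lesssim\|\barA w\|_{L^2}$ carries no lower-order term because $\barA$ has no zeroth-order term, so the pertinent homogeneous problem has kernel at most the constants (annihilated by $\|\partial_x^2\cdot\|_{L^2}$) and a routine Fredholm/compactness argument absorbs any remaining lower-order contribution. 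Since this is all textbook (see \cite{Taylor-book1}), I do not anticipate a genuine obstacle; the only points needing care are keeping the ellipticity and domain constants uniform in $t$ (handled in the first step) and verifying that $(g^{-1})^{ij}$ is regular enough for the $W^{2,2}$ theory, which is exactly what the a priori control on the frame supplies.

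With Lemma~\ref{lem:2nd order elliptic esti} in hand, together with Lemma~\ref{lem:BoxA}, the divergence--curl estimates of Lemmas~\ref{prop: H1 div curl sys 1} and \ref{prop: H1 div curl sys 2}, and the higher-order equations of Subsection~\ref{subsec:commutators}, one would then establish Proposition~\ref{prop:elliptic}: for $\Theta$ by using the boundary equation to express $D_n\Theta$ and applying \eqref{eq:ellipticlemNeu1} to the interior wave equation, for $D_V\sigma^2$ by applying \eqref{eq:ellipticlemDir1} with $\partial D_V\sigma^2$ as source, and for the curvature by feeding the divergence--curl system into Lemmas~\ref{prop: H1 div curl sys 1} and \ref{prop: H1 div curl sys 2}; iterating in the number of derivatives and commuting $\partial_t$ by means of Lemma~\ref{lem:commutators} then yields the stated control of all $\partial^p\partial_t^k$-derivatives.
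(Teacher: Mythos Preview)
Your proposal is correct and aligns with the paper's treatment: the paper simply states this lemma as a standard result from elliptic theory and cites \cite{Taylor-book1} without proof, while you have sketched the textbook argument (uniform ellipticity via Lemma~\ref{lem: gamma positivity}, reduction to homogeneous boundary data, difference quotients, and algebraic recovery of the doubly-normal derivative). Your additional care in tracking uniformity of the constants in $t$ and in justifying the absence of a lower-order $\|u\|_{L^2}$ term via the kernel structure is accurate and more explicit than what the paper records.
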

			\begin{proof}[Proof of Proposition \ref{prop:elliptic}] Let $M\leq \ell$. 
				We use Lemmas~\ref{prop: H1 div curl sys 1},~\ref{prop: H1 div curl sys 2}, and~\ref{lem:2nd order elliptic esti}. The proof is by induction on the order $p$ of $\partial_{x}^p$, starting with the fluid quantities $\Theta$ and $\sigma^2$. When $p=1$ the estimates for $\partial \partial_t^k\Theta$ and $\partial \partial_t^k\sigma^2$ follow from the definition of the energies.  Now we assume that the estimate holds for orders less or equal to $1\leq p\leq\frac{M+2}{2}-1$, that is,
				\begin{align}\label{eq:JIH}
					\begin{split}
						\sum_{q\leq p}\sum_{k+2q\leq M+2}\|\partial^q \partial_t^{k}\Theta\|_{L^2(\Omega_t)}^2+\sum_{q\leq p}\sum_{k+2q\leq M+2}\|\partial^{q}\partial_t^{k+1}\sigma^{2}\|_{L^{2}(\Omega_{t})}^{2}\lesssim \calE_\ell(t),
					\end{split}
				\end{align}	
				and prove the estimates for $p+1$, that is,
				\begin{align}\label{eq:JIG}
					\begin{split}
						\sum_{k\leq M-2p}\|\partial^{p+1} \partial_t^{k}\Theta\|_{L^2(\Omega_t)}^2+\sum_{k\leq M-2p}\|\partial^{p+1}\partial_t^{k+1}\sigma^{2}\|_{L^{2}(\Omega_{t})}^{2}\lesssim \calE_\ell(t).
					\end{split}
				\end{align}
				To prove this we apply estimates \eqref{eq:ellipticlemNeu1} and \eqref{eq:ellipticlemDir1} in Lemma~\ref{lem:2nd order elliptic esti} to the equations for $\partial^{p-1}\partial_t^k\Theta$ and $\partial^{p-1}\partial_t^{k+1}\sigma^2$ respectively (more precisely we first commute tangential derivatives so that the boundary equation for $\Theta$ and boundary condition for $\partial_t\sigma^2$ can be used, and then use the equation algebraically to recover the estimates for normal derivatives). The details are similar to the proof of Proposition~2.11 in \cite{MSW1}, except that now we also need to treat the contribution of the curvature on the right-hand side of these equations. In view of Lemmas~\ref{lem:Vinthigh},~\ref{lem:Vbdryhigh} and ~\ref{lem:sigmahigh} and the trace theorem, the highest order curvature term that we need to control is $\|\partial^p\partial_t^{k-1}R\|_{L^2(\Omega_t)}$. But since $2p+(k-1)\leq M-1\leq \ell-1$, we are able to use the estimate on $\partial^{p} \partial_t^kR$ in the definition of $\scE_\ell$ and fundamental theorem of calculus to estimate this term by a small multiple of $\scE_\ell(t)$, provided $T$ is sufficiently small, which can be absorbed after combining with the elliptic estimates with for the curvature. Turning to the estimates for the curvature, we first note that the case $p=1$ follows from Lemmas~\ref{prop: H1 div curl sys 1} and~\ref{prop: H1 div curl sys 2}. Arguing inductively as before, we assume that for some $1\leq p \leq \frac{M}{2}-1$,
				\begin{align*}
					\begin{split}
						\sum_{q\leq p}\sum_{2q+k\leq M}\Big(\|\partial^p\partial_t^{k}R\|_{L^2(\Omega_t)}^2+\|\partial^p\partial_t^{k}R\|_{L^2(\Omega_t^c)}^2\Big)\lesssim\calE_{\ell}(t).
					\end{split}
				\end{align*}
				and prove the estimate for $p+1$, that is,
				\begin{align}\label{eq:JIHR}
					\begin{split}
						\sum_{k\leq M-2p-2}\Big(\|\partial^{p+1}\partial_t^{k}R\|_{L^2(\Omega_t)}^2+\|\partial^{p+1}\partial_t^{k}R\|_{L^2(\Omega_t^c)}^2\Big)\lesssim\calE_{\ell}(t).
					\end{split}
				\end{align}
				Note that since $2p+2+k\leq M\leq\ell$, and since we have already treated the estimates for $\Theta$ and $\sigma^2$, we may equivalently use the components $R_{IJKL}$ or the coordinate expressions $R_{\mu\nu\lambda\kappa}$ to prove the desired estimate. Indeed, the difference amounts to estimating $\partial^{p+1}\partial_t^ke_I^\mu$, which can be handled using the elliptic estimates for $\Theta$ and equations~\eqref{eq:etransport1} and~\eqref{eq:Gammatransport1}.  The proof of \eqref{eq:JIHR} is similar to that of \eqref{eq:JIH}, where now we use Lemmas~\ref{prop: H1 div curl sys 1} and~\ref{prop: H1 div curl sys 2} instead of Lemma~\ref{lem:2nd order elliptic esti}. Indeed, we first commute tangential covariant derivatives (see the proof of Proposition~2.11 in \cite{MSW1} for a similar argument for commuting tangential derivatives) with the Maxwell system \eqref{Maxwell-inhomo1}, \eqref{Maxwell-inhomo2}. Then  defining $E$ and $H$ in terms of $\snabla^p\nabla_t^k F$, Lemmas~\ref{prop: H1 div curl sys 1} and~\ref{prop: H1 div curl sys 2}, the induction hypothesis, and \eqref{eq:JIH} allow us to estimate the tangential derivatives in \eqref{eq:JIHR}. Note that as argued above for the components of $R$, the difference between the covariant and coordinate derivatives can be controlled in terms of derivatives of $e_I$ using \eqref{eq:JIH}. Note that in the highest derivative of $\Theta$ that comes up after commuting derivatives comes from the term $\partial^{p+1}\partial_t^ke_I$. When $k\geq 1$, using \eqref{eq:etransport1}, this amounts to estimating $\partial^{p+2}\partial_t^{k-1}\Theta$, which can be done because $2(p+2)+k-1\leq M+1$ in \eqref{eq:JIH}. When $k=0$, we can still use \eqref{eq:etransport1} and the fundamental theorem of calculus to estimate $\partial^{p+1}e_I$ by the same numerology as before, which gives $2(p+2)\leq M+2$. Finally to estimate the remaining normal derivatives, we use equations~\eqref{eq: div barE 3},~\eqref{eq: div H 2},~\eqref{eq: curl H 1}, and~\eqref{eq: curl barE pre}, and the positivity of $\gamma$ in Lemma~\ref{lem: gamma positivity}, to express the normal derivatives in terms of tangential ones. Note that here the equations are satisfied almost everywhere on $\Omega_t$ and $\Omega_t^c$, and we can apply normal derivatives on each side independently. 
			\end{proof}
			\subsection{Proof of Proposition~\ref{prop:apriori}}\label{subsec:apriori}
			This section contains the proof of Proposition~\ref{prop:apriori}. We start with two auxiliary lemmas. The first is used to estimate two derivatives of $\partial_t^{\ell}\sigma^2$ in terms of the top order energy.
			\begin{lemma}\label{lem:d2sigma}
				Under the hypotheses of Proposition~\ref{prop:apriori}, if $T>0$ is sufficiently small then for any $k\leq \ell$ and $t\in[0,T]$
				\begin{align*}
					\begin{split}
						\|\partial^2\partial_t^k\sigma^2\|_{L^2(\Omega_t)}^2\lesssim \calE_\ell(T).
					\end{split}
				\end{align*} 
				Here the implicit constant depends polynomially on $\calE_{k-1}(T)$ if $k$ is sufficiently large, and on $C_1$ for small $k$.
			\end{lemma}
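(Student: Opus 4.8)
The plan is to use the wave equation for $\sigma^2$ as an elliptic equation with Dirichlet data, following the strategy of Proposition~\ref{prop:elliptic}, but tracking the dependence of the constants more carefully so as to obtain a bound in terms of $\calE_\ell(T)$ alone (with coefficient constants depending on lower-order energies). First I would recall from \eqref{eq:Sigma1} and Lemma~\ref{lem:sigmahigh} that $\partial_t^k\sigma^2$ satisfies $\Box \partial_t^k\sigma^2 = H$ in $\Omega$ with $\partial_t^k\sigma^2$ constant on $\partial\Omega$ (for $k\geq 1$ it is identically $0$, for $k=0$ it equals $1$). Using Lemma~\ref{lem:BoxA} to convert $\Box$ into the elliptic operator $\barA$ plus terms involving $\partial_t^2$, $\partial_j\partial_t$, and first-order terms, I would write $\barA\partial_t^k\sigma^2 = H + (\text{lower order in derivative count for the spatial operator})$, and then apply the Dirichlet elliptic estimate \eqref{eq:ellipticlemDir1}: $\|\partial_x^2\partial_t^k\sigma^2\|_{L^2(\Omega_t)}\lesssim \|\barA\partial_t^k\sigma^2\|_{L^2(\Omega_t)} + \|\partial_t^k\sigma^2\|_{H^{3/2}(\partial\Omega_t)}$. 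The boundary term is a constant so it is harmless.

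Next I would estimate $\|\barA\partial_t^k\sigma^2\|_{L^2(\Omega_t)}$. The correction terms from Lemma~\ref{lem:BoxA} contain $\partial_t^2\partial_t^k\sigma^2 = \partial_t^{k+2}\sigma^2$, $\partial_j\partial_t^{k+1}\sigma^2$, and first derivatives of $\partial_t^k\sigma^2$; the $\partial_t^{k+2}\sigma^2$ term is the dangerous one since the energy $\calE_\ell(T)$ only controls $\partial\partial_t^{k+1}\sigma^2$ in $L^2(\Omega_t)$ for $k\leq \ell$, i.e. one time derivative too few at top order. The resolution — which is exactly why the statement restricts to $k\leq\ell$ and invokes $C_1$ — is that the coefficient of $\partial_t^2$ in Lemma~\ref{lem:BoxA}, namely $\big((1-\hatTheta^\tilI e_\tilI^0)/\hatTheta^0\big)^2 - \sum_\tilI (e_\tilI^0)^2$, vanishes at $t=0$ (since $e_\tilI^0|_{t=0}=0$ and $\hatTheta^0|_{t=0}$ makes the first term equal $1/(\hatTheta^0)^2$... here one must check the algebra, but the point is the Box and $\barA$ agree up to the small-in-time quantities $e_\tilI^0$). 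Hence this coefficient is $O(T^{1/2}\cdot(\text{bounded}))$ or more precisely bounded by $T$ times a polynomial in $\scE_\ell$, by the fundamental theorem of calculus applied to the transport equation \eqref{eq:etransport1} for $e_\tilI^0$. Thus the bad term $\partial_t^{k+2}\sigma^2$ appears with a small coefficient and can be absorbed, leaving only quantities controlled by $\calE_\ell(T)$ and lower-order ones estimated via Sobolev/Moser using $\calE_{k-1}(T)$ or $C_1$.

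For the source term $H$ in Lemma~\ref{lem:sigmahigh}, $H$ is a sum of contractions of $\Gamma, D\Gamma, \partial_t^{j}\sigma^2, \partial_t^{j}D_V\sigma^2, D\partial_t^j\sigma^2, D\partial_t^{j+1}\sigma^2, D^2\partial_t^j\sigma^2, \partial_t^j\Theta, D\partial_t^j\Theta, D^2\partial_t^j\Theta, \partial_t^j R, D\partial_t^j R$ with the stated index constraints ($j\leq k$ except $D^2\partial_t^{j_9}\Theta$ and $D\partial_t^{j_{10}}R$ which need $j\leq k-1$). Every one of these is controlled either directly by $\calE_\ell(T)$, or by the elliptic estimates of Proposition~\ref{prop:elliptic} (for $\partial^2\partial_t^j\Theta$, $\partial^2\partial_t^j\sigma^2$ with $j$ small), or — for the $L^\infty$ factors multiplying them — by Sobolev embedding with constants depending polynomially on $\calE_{k-1}(T)$ or on $C_1$ when $k$ is small. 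The only subtlety is the term $D^2\partial_t^{k-1}\sigma^2$ inside $H$ when $j_5 = k-1$; but this is of the same type as the quantity being estimated, at one lower time-derivative order, so it is already controlled by the $k-1$ case of this very lemma (inducting on $k$), contributing the polynomial dependence on $\calE_{k-1}(T)$.

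I expect the main obstacle to be the bookkeeping around the borderline term $\partial_t^{k+2}\sigma^2$: one must verify carefully that the coefficient in Lemma~\ref{lem:BoxA} multiplying $\partial_t^2$ is genuinely $O(T)$ (rather than merely bounded) under the bootstrap hypotheses, using that $e_\tilI^0|_{t=0}=0$ and the transport equation, and that the resulting smallness beats the (possibly large, $C_1$-sized) $L^\infty$ norms appearing as coefficients. Everything else is a routine combination of the elliptic estimates \eqref{eq:ellipticlemDir1}, the higher-order equation of Lemma~\ref{lem:sigmahigh}, Proposition~\ref{prop:elliptic}, and Moser-type product estimates, structured as an induction on $k$ with the base case $k$ small handled by absorbing all coefficient norms into $C_1$.
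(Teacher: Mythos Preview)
Your overall strategy---rewrite $\Box\partial_t^k\sigma^2=H$ via Lemma~\ref{lem:BoxA} as an elliptic equation for $\barA\partial_t^k\sigma^2$, apply the Dirichlet estimate \eqref{eq:ellipticlemDir1}, and control $\|H\|_{L^2}$ inductively using Lemma~\ref{lem:sigmahigh} and Proposition~\ref{prop:elliptic}---is correct and is exactly what the paper does for large $k$.

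However, the central ``obstacle'' you identify is not real, and your proposed fix for it is wrong. You claim $\partial_t^{k+2}\sigma^2$ is one time derivative beyond what $\calE_\ell(T)$ controls, but in fact $E_k(T)$ already contains $\|\partial\partial_t^{k+1}\sigma^2\|_{L^2(\Omega_t)}^2$, and $\partial$ includes $\partial_t$; hence $\|\partial_t^{k+2}\sigma^2\|_{L^2(\Omega_t)}^2\leq\calE_\ell(T)$ directly for every $k\leq\ell$. No absorption argument is needed. Moreover, your absorption argument would fail anyway: the coefficient of $\partial_t^2$ in Lemma~\ref{lem:BoxA} is precisely $-(g^{-1})^{00}$, which at $t=0$ equals $(\hatTheta^0)^{-2}$ (since $e_\tilI^0|_{t=0}=0$), an $O(1)$ positive quantity bounded away from zero---it does \emph{not} vanish initially and is not $O(T)$. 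So your detour through smallness-in-time for this coefficient should simply be deleted; the term $\partial_t^{k+2}\sigma^2$ goes straight into the right-hand side.

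One further difference: for small $k$ the paper does not use the elliptic argument at all, but instead integrates $\partial_t\|\partial^2\partial_t^k\sigma^2\|_{L^2(\Omega_t)}^2$ in time via the fundamental theorem of calculus, using the bootstrap bound $\scE_\ell\leq C_1$ (which for small $k$ controls $\partial^2\partial_t^{k+1}\sigma^2$). This is why the implicit constant depends on $C_1$ for small $k$ rather than on $\calE_{k-1}(T)$.
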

			\begin{proof}
				We apply Lemma~\ref{lem:BoxA}. For $k$ small we can use the fundamental theorem of calculus by integrating $\partial_t\|\partial^2\partial_t^k\sigma^2\|_{L^2(\Omega_t)}^2$ in time, and using the bootstrap assumptions. For $k$ large, proceeding inductively, we need to show that $\|H\|_{L^2(\Omega_t)}$, with $H$ as in Lemma~\ref{lem:sigmahigh}, is bounded by $\calE_\ell$. But this follows from the structure of $H$ given in Lemma~\ref{lem:sigmahigh}, the elliptic estimates in Proposition~\ref{prop:elliptic}, and the induction hypothesis (that is, the desired estimate on $\|\partial^2\partial_t^{k-1}\sigma^2\|_{L^2(\Omega_t)}$).  See the proof of Lemma 2.17 in \cite{MSW1} for a related argument in the flat case.
			\end{proof}
			The next lemma will be used to estimate the $L^2(\partial\Omega_0^T)$ norm of $\partial \partial_t^{\ell-1}\Theta$.
			\begin{lemma}\label{lem:dVboundary}
				Suppose the hypotheses of Proposition~\ref{prop:apriori} hold. Given $\eta>0$ (small), if $T>0$ is sufficiently small then for any $k\leq \ell$ and $t\in[0,T]$ 
				\begin{align}\label{eq:nablaVGamma2}
					\begin{split}
						\|\partial\partial_t^{j}\Theta\|_{L^2(\partial\Omega_0^T)}^2\lesssim \calE_\ell(0)+R_{j}(\calE_{k-1}(T))+\eta\calE_{k}(T),\qquad 0\leq j\leq k-1,
					\end{split}
				\end{align} 
				where the implicit constant is  independent of $C_1$, and $R_{j}$ is some polynomial function for each $j$.
			\end{lemma}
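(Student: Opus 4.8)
The plan is to fix $j$ with $0\le j\le k-1$, write $u:=\partial_{t}^{j}\Theta^{I}$, and run a multiplier argument for the interior wave equation $\Box u=F$ in $\Omega$ (with $F$ as in Lemma~\ref{lem:Vinthigh}) using the future-directed timelike vectorfield $Q=\partial_{t}+\alpha n$ furnished by Lemma~\ref{lem:nablaVGamma}. The key structural observation is that on $\partial\Omega$ one has $g(\partial_{t},n)=\tfrac{1}{a\|V\|}D_{V}\sigma^{2}=0$ because $D_{V}\sigma^{2}\equiv0$ there, while $\partial_{t}=\hatV$ is unit timelike and tangent to $\partial\Omega$; hence on the boundary
\begin{align*}
(g^{-1})^{\alpha\beta}\partial_{\alpha}u\,\partial_{\beta}u=-(\partial_{t}u)^{2}+|\snabla u|^{2}+(D_{n}u)^{2},
\end{align*}
where $\snabla$ is the tangential spatial gradient along $\partial\Omega_{t}$. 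Substituting this into the boundary flux in \eqref{eq:nablaVGamma1}, using that $Qu=\partial_{t}u+\alpha D_{n}u$, and discarding the bulk term $\int_{\Omega_{T}}(\cdots)\sqrt{|g|}\,\ud x\ge0$ (a nonnegative future-timelike energy), the identity \eqref{eq:nablaVGamma1} rearranges to
\begin{align*}
\frac{\alpha}{2}\int_{0}^{T}\int_{\partial\Omega_{t}}\big|\snabla\partial_{t}^{j}\Theta\big|^{2}\sqrt{|g|}\,\ud S\,\ud t\ \lesssim\ \mathcal{D}+\int_{0}^{T}\int_{\partial\Omega_{t}}\Big((D_{n}\partial_{t}^{j}\Theta)^{2}+(\partial_{t}^{j+1}\Theta)^{2}\Big)\sqrt{|g|}\,\ud S\,\ud t,
\end{align*}
where $\mathcal{D}$ collects the data term $\int_{\Omega_{0}}(\cdots)$, the source term $-\int_{0}^{T}\int_{\Omega_{t}}F\,Qu\,\sqrt{|g|}$, and the deformation term in the last line of \eqref{eq:nablaVGamma1}. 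Since $\|\partial\partial_{t}^{j}\Theta\|_{L^{2}(\partial\Omega_{0}^{T})}^{2}\lesssim \|\snabla\partial_{t}^{j}\Theta\|_{L^{2}(\partial\Omega_{0}^{T})}^{2}+\|D_{n}\partial_{t}^{j}\Theta\|_{L^{2}(\partial\Omega_{0}^{T})}^{2}+\|\partial_{t}^{j+1}\Theta\|_{L^{2}(\partial\Omega_{0}^{T})}^{2}$ (with coefficients controlled by the bootstrap), it remains to estimate the three boundary pieces and $\mathcal{D}$.

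For the $\partial_{t}$-piece, $\|\partial_{t}^{j+1}\Theta\|_{L^{2}(\partial\Omega_{0}^{T})}^{2}\le T\sup_{t}\|\partial_{t}^{j+1}\Theta\|_{L^{2}(\partial\Omega_{t})}^{2}\le T\,E_{j}(T)\le T\,\calE_{k-1}(T)$, which is harmless for $T\le 1$. For the $D_{n}$-piece we use the boundary equation $(\partial_{t}^{2}+\gamma D_{n})\partial_{t}^{j}\Theta=f$ from Lemma~\ref{lem:Vbdryhigh} together with $\gamma\gtrsim1$ (the Taylor condition \eqref{eq:Taylor1}, true initially, persists for $T$ small, cf. the discussion after \eqref{eq:Taylor1}) to write $D_{n}\partial_{t}^{j}\Theta=\gamma^{-1}(f-\partial_{t}^{j+2}\Theta)$, so
\begin{align*}
\|D_{n}\partial_{t}^{j}\Theta\|_{L^{2}(\partial\Omega_{0}^{T})}^{2}\lesssim \|f\|_{L^{2}(\partial\Omega_{0}^{T})}^{2}+T\sup_{t}\|\partial_{t}^{j+2}\Theta\|_{L^{2}(\partial\Omega_{t})}^{2}.
\end{align*}
The last term is $\le T\,E_{j+1}(T)$, which is $\le T\,\calE_{k-1}(T)$ when $j\le k-2$ and $\le T\,E_{k}(T)\le\eta\,\calE_{k}(T)$ when $j=k-1$, provided $T\le\eta$. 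For $\|f\|_{L^{2}(\partial\Omega_{0}^{T})}^{2}$ we apply the trace inequality $\|f\|_{L^{2}(\partial\Omega_{t})}^{2}\lesssim\|f\|_{L^{2}(\Omega_{t})}\|f\|_{H^{1}(\Omega_{t})}+\|f\|_{L^{2}(\Omega_{t})}^{2}$ and bound $\|f\|_{H^{1}(\Omega_{t})}$ from the structure of $f$ in Lemma~\ref{lem:Vbdryhigh}, using the elliptic estimates of Proposition~\ref{prop:elliptic} and Lemma~\ref{lem:d2sigma} (the latter needed for the $\partial^{2}\partial_{t}^{j_{1}+1}\sigma^{2}$ produced when an extra derivative hits $D\partial_{t}^{j_{1}+1}\sigma^{2}$). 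A direct count using $j_{1},j_{2}\le j\le k-1$ and $j_{3},\dots,j_{6}\le j-1\le k-2$ shows each resulting term is controlled by a polynomial in $\calE_{k-1}(T)$ or is top order (bounded by $\calE_{k}(T)$, or $\calE_{\ell}(T)$ when $k=\ell$); the overall factor $T$ and small $T$ then give $\|f\|_{L^{2}(\partial\Omega_{0}^{T})}^{2}\lesssim R_{j}(\calE_{k-1}(T))+\eta\,\calE_{k}(T)$.

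To finish, bound $\mathcal{D}$. The data term is $\lesssim\|\partial\partial_{t}^{j}\Theta\|_{L^{2}(\Omega_{0})}^{2}\le E_{j}(0)\le\calE_{\ell}(0)$. The source term obeys $\big|\int_{0}^{T}\int_{\Omega_{t}}F\,Qu\,\sqrt{|g|}\big|\lesssim T\sup_{t}\|F\|_{L^{2}(\Omega_{t})}^{2}+T\,E_{j}(T)$, and reading off $F$ from Lemma~\ref{lem:Vinthigh} and using Proposition~\ref{prop:elliptic} and Lemma~\ref{lem:d2sigma} as above gives $\sup_{t}\|F\|_{L^{2}(\Omega_{t})}^{2}\lesssim \calP(\calE_{k-1}(T))+C\,\calE_{k}(T)$, hence a contribution $\lesssim R_{j}(\calE_{k-1}(T))+\eta\,\calE_{k}(T)$. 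The deformation term is $\lesssim T\sup_{t}\|\partial Q\|_{L^{\infty}}\|\partial\partial_{t}^{j}\Theta\|_{L^{2}(\Omega_{t})}^{2}\lesssim T\,\calE_{k-1}(T)$, where $\|\partial Q\|_{L^{\infty}}$ (involving $\partial^{2}\sigma^{2}$, $\partial\sigma^{2}$, $a^{-1}$) is bootstrap-bounded and absorbed into $R_{j}$. Summing yields \eqref{eq:nablaVGamma2}. The main obstacle is precisely this order bookkeeping: after the trace theorem costs one derivative, one must check that no term in $F$ or $f$ exceeds order $\calE_{k-1}(T)$ except genuinely top-order pieces, which always carry a factor $T$ and can be absorbed into $\eta\,\calE_{k}(T)$ once $T$ is small; Proposition~\ref{prop:elliptic} and Lemma~\ref{lem:d2sigma} are indispensable here, as they convert control of $\partial_{t}$-derivatives into control of full spatial derivatives at the cost of factors depending only on already-bootstrapped quantities, which is also what keeps the leading implicit constant independent of $C_{1}$, all $C_{1}$-dependence being isolated inside $R_{j}$ or the $\eta$-multiplied top-order term. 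The favorable sign of the boundary flux — the conceptual core — is already supplied by Lemma~\ref{lem:nablaVGamma} and its preceding remark, once combined with the orthogonality $\partial_{t}\perp n$ on $\partial\Omega$.
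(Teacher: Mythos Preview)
Your proof is correct and follows the same overall strategy as the paper: apply the multiplier identity of Lemma~\ref{lem:nablaVGamma} with $u=\partial_t^{j}\Theta$, control the $\partial_t$ and $D_n$ pieces of the boundary flux using the energy and the boundary equation from Lemma~\ref{lem:Vbdryhigh}, and absorb the interior source $F$ and the deformation term using smallness of $T$ together with Proposition~\ref{prop:elliptic}. Your observation that the bulk term on $\Omega_T$ has a favorable sign is also noted in the paper (though the paper opts to simply bound it by $\calE$ rather than discard it).

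The one genuine methodological difference is in the treatment of $\|f\|_{L^{2}(\partial\Omega_0^T)}$. The paper argues by \emph{induction on $j$}: having already established \eqref{eq:nablaVGamma2} for all indices $\le j-1$, it observes that the most dangerous constituents of $f$ in Lemma~\ref{lem:Vbdryhigh} are precisely $D\partial_t^{j_5}\Theta$ with $j_5\le j-1$, and these are bounded on $\partial\Omega_0^T$ directly by the induction hypothesis. You instead pass through the trace inequality to $\|f\|_{H^1(\Omega_t)}$ and invoke interior elliptic estimates. Your route works, but it implicitly requires a refinement of Proposition~\ref{prop:elliptic} in which $\|\partial^{2}\partial_t^{j_5}\Theta\|_{L^2(\Omega_t)}$ is bounded by $\calE_{j_5+2}(t)$ rather than the stated $\calE_\ell(t)$; this refinement does follow from the same elliptic argument (each spatial derivative costs two time derivatives via the boundary equation), but you have not spelled it out, and without it the bookkeeping claim ``controlled by a polynomial in $\calE_{k-1}(T)$ or top order $\calE_k(T)$'' is not justified by the proposition as written. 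The paper's induction avoids this issue entirely and keeps the argument self-contained.
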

			\begin{proof}
				For small $j$ this follows for instance from the trace theorem and Proposition~\ref{prop:elliptic}. For large $j$, inductively we assume that we have proved the statement for $j\leq i-1\leq k-2$ and prove it for $j=i$. We apply Lemma~\ref{lem:nablaVGamma} with $u=\partial_t^{i}\Theta$. We will show that the desired control comes from the integral on $\partial\Omega$ on the left-hand side of \eqref{eq:nablaVGamma1}. The integrals on $\Omega_{T}$ and $\Omega_0$, as well as the last line on the right-hand side of \eqref{eq:nablaVGamma1} can be bounded by the right-hand side of \eqref{eq:nablaVGamma2} using the definition of $\calE$ (the integral on $\Omega_T$ in \eqref{eq:nablaVGamma1} can be seen to have a favorable sign to leading order, but we do not need to use this).   For the integral on $\partial\Omega$ on the left-hand side of \eqref{eq:nablaVGamma1} first note that the contribution of $\|\partial_t\partial_t^i\Theta\|_{L^2(\partial\Omega_0^T)}$ is of the desired form by the definition of $\calE$. For the contribution of $\|D_n\partial_t^i\Theta\|_{L^2(\partial\Omega_0^T)}$ we use the equation for $\partial_t^i\Theta$ to express $D_n\partial_t^i\Theta$ in terms of $\partial_t^2\partial_t^i\Theta$ and $f$, where $f$ is as in Lemma~\ref{lem:Vbdryhigh}. The contribution of  $\partial_t^2\partial_t^i\Theta$ is of the desired form by taking $T$ small. The contribution of $f$ is bounded using the induction hypothesis. The integral on $\partial\Omega$ on the left-hand side of \eqref{eq:nablaVGamma1} then gives the desired control on the remaining tangential derivatives of $\partial_t^i\Theta$.  
				Finally we need to consider the contribution of $F$ in \eqref{eq:nablaVGamma1}, where $F$ is as in Lemma~\ref{lem:Vinthigh} with $k$ replaced by $i$. But since $i\leq k-1$, in view of the structure of $F$ given in Lemma~\ref{lem:Vinthigh} the contributions of these terms can be seen to be of the desired form by using Proposition~\ref{prop:elliptic} and taking $T$ small.
			\end{proof}
			We now turn to the proof of Proposition~\ref{prop:apriori}.
			\begin{proof}[Proof of Proposition~\ref{prop:apriori}]
				We proceed inductively on $k$ to show that $\calE_k(T)\leq \calP_{k}(\calE_\ell(0))$. Using  Lemmas~\ref{lem:Venergy1},~\ref{lem:sigmaenergy},~\ref{lem:Wenergy}, and elliptic and Sobolev estimates to bound lower order terms in $L^\infty$, we see that $\calE_0(T)\leq \calP_{0}(\calE_\ell(0))$. Now we assume that $\calE_{j}(T)\leq \calP_{j}(\calE_\ell(0))$ for all $j\leq k-1$ and some $k\leq \ell$, and use this to prove that $\calE_k(T)\leq \calP_{k}(\calE_\ell(0))$.
				
				{\bf{Step 1:}} We show that given $\delta>0$ (small) if $T>0$ is sufficiently small then
				\begin{align}\label{eq:aprioriclaim1}
					\begin{split}
						\sup_{0\leq t<T}\|\partial \partial_t^{k+1}\sigma^2\|_{L^2(\Omega_t)}^2+\|\partial \partial_t^{k+1}\sigma^2\|_{L^2(\partial\Omega_0^T)}^2\leq \tilP_{k}(\calE_\ell(0))+\delta \calE_k(T).
					\end{split}
				\end{align}
				This follows from applying Lemma~\ref{lem:sigmaenergy} to the wave equation satisfied by $\partial_t^{k+1}\sigma^2$ as given in Lemma~\ref{lem:sigmahigh}. Using elliptic estimates from Proposition~\ref{prop:elliptic} and Sobolev to bound lower order terms in $L^\infty$, the contributions of all the error terms in Lemma~\ref{lem:sigmaenergy}, except the integral over $\Omega$ of $H Q\partial_t^{k+1}\sigma^2$, are of the desired form by taking $T$ small. For the contribution of  $H Q\partial_t^{k+1}\sigma^2$ we refer to Lemma~\ref{lem:sigmahigh} for the structure of $H$. Unless $H$ contains $\partial^2\partial_t^k\sigma^2$ or $\partial^2\partial_t^{k-1}\Theta$, in view of Proposition~\ref{prop:elliptic}, the corresponding contribution can be estimated using Cauchy-Schwarz, the induction hypothesis, and by taking $T$ small. The contribution of $\partial^2\partial_t^{k}\sigma^2$ can also be handled by the same argument, where now in addition we use Lemma~\ref{lem:d2sigma}. Finally for terms involving $\partial^2\partial_t^{k-1}\Theta$ we need to perform a few integration by parts. The details are similar to the corresponding argument in \cite{MSW1} (see Step 1 of proof of Proposition 2.1) so we carry this out schematically. Writing the corresponding term as $F\cdot\nabla^{(2)}\partial_t^{k-1}\Theta Q\partial_t^{k+1}\sigma^2$ with $F$ consisting of lower order terms, we have
				\begin{align*}
					\begin{split}
						F\cdot\nabla^{(2)}\partial_t^{k-1}\Theta Q\partial_t^{k+1}\sigma^2&=\nabla\cdot\big(F\cdot\nabla\partial_t^{k-1}\Theta Q\partial_t^{k+1}\sigma^2\big)-\big(\nabla\cdot(F Q\partial_t^{k+1}\sigma^2)\big)\cdot\nabla \partial_t^{k-1}\Theta\\
						&=\nabla\cdot\big(F\cdot\nabla\partial_t^{k-1}\Theta Q\partial_t^{k+1}\sigma^2\big)-\partial_t\big((\nabla Q\partial_t^k\sigma^2)F\cdot\nabla\partial_t^{k-1}\Theta\big)\\
						&\quad-Q\partial_t^{k+1}\sigma^2\nabla F\cdot\nabla\partial_t^{k-1}\Theta-[\nabla Q,\nabla_\hatV]\partial_t^{k}\sigma^2 F \cdot \nabla\partial_t^{k-1}\Theta\\
						&\quad+(\nabla Q\partial_t^k\sigma^2)\nabla_\hatV F\cdot\nabla\partial_t^{k-1}\Theta+(\nabla Q\partial_t^k\sigma^2)F\cdot\nabla\partial_t^{k}\Theta\\
						&\quad+(\nabla Q\partial_t^k\sigma^2)F\cdot[\nabla_\hatV,\nabla]\partial_t^{k-1}\Theta.
					\end{split}
				\end{align*}
				Except for the first line, the other terms can be handled by the same arguments as above and taking $T$ small. For the first line we integrate by parts. The resulting terms on $\Omega_T$ contain at most one top order term, so these can be handled using Cauchy-Schwarz with a small constant and using the induction hypothesis. Moreover, since $\partial_t$ is tangential to $\partial\Omega$ only the first term contributes a boundary term on $\partial\Omega$ and this term can be handled using Cauchy-Schwarz with a small constant and Lemma~\ref{lem:dVboundary}. This proves~\eqref{eq:aprioriclaim1}. 
				
				{\bf{Step 2:}} We show that given $\delta>0$ (small) if $T>0$ is sufficiently small then
				\begin{align*}
					\begin{split}
						\Big|\int_0^T\int_{\Omega_t}(\Box \partial_t^{k}\Theta)(\partial_t^{k+1}\Theta)\sqrt{|g|}\ud x\ud t\Big|\leq \hatP_{k}(\calE_\ell(0))+\delta \calE_{k}(T).
					\end{split}
				\end{align*}
				The proof is similar to the argument just produced to handle the contribution of $HQ\partial_t^{k+1}\sigma^2$ in Step 1 above, where now we use Lemma~\ref{lem:Vinthigh} for the structure of $\Box\partial_t^k\Theta$. Except for terms containing $\partial^2\partial_t^{k-1}\Theta$ all other terms can be bounded using Cauchy-Schwarz, the elliptic estimates from Proposition~\ref{prop:elliptic}, and by taking $T$ small. For terms involving $\partial^2\partial_t^{k-1}\Theta$ we argue as in the previous step using integration by parts and appealing to Lemma~\ref{lem:dVboundary} to estimate the boundary terms on $\partial\Omega$. The details are again similar to the corresponding argument in \cite{MSW1} (see Step 2 of proof of Proposition 2.1).
				
				{\bf{Step 3:}} We show that given $\delta>0$ (small) if $T>0$ is sufficiently small then
				\begin{align*}
					\begin{split}
						\sup_{0\leq t< T}\|\partial_t^{k}R\|_{L^2(\Sigma_t)}^{2}\leq \check{P}_{k}(\calE_\ell(0))+\delta\calE_{k}(T).
					\end{split}
				\end{align*}
				Note that since $R$ can be expressed algebraically in terms of $W$ and $\Ric_{\mu\nu}=V_\mu V_\nu+\frac{1}{2}g_{\mu\nu}$, it suffices to prove this estimate with $R$ replaced by $W$. The estimate for $W$ then follows from applying Lemma~\ref{lem:Wenergy} to the higher order equation for $W$ given in Lemma~\ref{lem:Whigh}, and taking $T$ small. Note that the term $D\partial_t^{j_6}W$ on the right-hand side can be bounded in terms of $\partial_t^{j_6+1}W$ using Proposition~\ref{prop:elliptic}. 
				
				{\bf{Step 4:}} We complete the proof of Proposition~\ref{prop:apriori} by applying Lemma~\ref{lem:Venergy1} to $u=\partial_t^k\Theta$. Except for the terms involving $F$ and $f$, all other terms on the right-hand side of \eqref{eq:Venergy1} are of the desired size by taking $T$ sufficiently small. The contribution of $F$ was already handled in Step 2 above. Finally in view of Lemma~\ref{lem:Vbdryhigh}, the contribution of $f$ can also be bounded using Cauchy-Schwarz, the induction hypothesis, and Proposition~\ref{prop:elliptic}. Note that here to bound  $\|\partial_t^{k-1}R\|_{L^2(\partial\Omega_0^T)}$ we use the trace theorem as well as the elliptic estimates from Proposition~\ref{prop:elliptic}. Summarizing, we have shown that given $\delta>0$, if $T$ is sufficiently small,
				\begin{align*}
					\begin{split}
						\sup_{0\leq t <T}\big(\|\partial\partial_t^k\Theta\|_{L^2(\Omega_t)}^2+\|\partial_t^{k+1}\Theta\|_{L^2(\partial\Omega_t)}^2\big)\leq P^\sharp(\calE_\ell(0))+\delta \calE_k(T).
					\end{split}
				\end{align*}
				The induction claim $\calE_k(T)\leq \calP_k(\calE_\ell(0))$ now follows by choosing $\delta$ sufficiently small in Steps 1 -- 4 sufficiently small, and this concludes the proof of Proposition~\ref{prop:apriori}.
			\end{proof}
			
\section{Linear theory and the iteration}\label{sec:iteration}
In this section we set up an iteration based on the a priori estimates established in the previous section, in Proposition~\ref{prop:apriori} and its proof. We start by recalling the linear existence theory for the equations we use for the iteration. The linear equations for the geometric quantities are of first order hyperbolic or transport types, and the non-standard results we use are for the system satisfied by the fluid equations. The linear theory for the latter was established in \cite{MSW1}, and we now briefly recall this.

Starting with the equation for $\Theta$, let us recall from \cite{MSW1} the weak formulation of the equation
\begin{align}\label{eq:Thetaweaktemp1}
	\begin{split}
		\Box \Theta = F,\mathrm{~in~} \Omega,\qquad (\partial_t^2+\gamma D_n)\Theta=f, \mathrm{~on~}\partial\Omega.
	\end{split}
\end{align}
We use  $\angles{\cdot}{\cdot}$ to denote the inner product in $L^2(\Omega_0)$ with respect to $\ud x$, and $\bangles{\cdot}{\cdot}$ to denote the inner product in $L^2(\partial \Omega_0)$ with respect to the induced Euclidean measure $\ud S$. The pairing between $(H^1(\Omega_0))^\ast$ and $H^1(\Omega_0)$ is denoted by $(\cdot,\cdot)$. We define the bounded linear map $\Phi:H^1(\Omega_0)\to(H^1(\Omega_0))^\ast$ by
\begin{align*}
	\begin{split}
		(\Phi(u),v):=\angles{-(g^{-1})^{tt}u}{v}+\bangles{\gamma^{-1}\tr\,u}{\tr\,v},\qquad v\in H^1(\Omega_0).
	\end{split}
\end{align*}
Here, and in what follows, $\tr$ denotes the Sobolev trace operator. The following bilinear forms will be used in the formulation of the weak problem (since $\Omega_0$ is diffeomorphic to a ball, here we have parameterized the initial domain using the usual polar coordiantes $(r,\theta)$):
\begin{align}\label{eq:defbilin}
	\begin{split}
		&B:H^1(\Omega_0)\times H^1(\Omega_0)\to \bbR,\qquad C:L^2(\Omega_0)\times H^1(\Omega_0)\to\bbR,\qquad D,E:L^2(\partial \Omega_0)\times H^1(\Omega_0)\to \bbR,\\
		&B(u,v):=\angles{{(g^{-1})}^{ij}\partial_iu}{\partial_jv} -\frac{1}{2}\angles{\partial_iu}{ v{(g^{-1})}^{i\alpha}\partial_\alpha\log |g|}-\angles{\partial_iu}{v\partial_t{(g^{-1})}^{ti}}+\angles{u}{v\partial_t^2(g^{-1})^{tt}},\\
		&C(u,v):=2\angles{u}{{(g^{-1})}^{ti} \partial_iv}-\frac{1}{2}\angles{u}{v {(g^{-1})}^{t\alpha}\partial_\alpha\log |g|}+\angles{u}{v\partial_i {(g^{-1})}^{ti}}+\angles{u}{v\partial_t(g^{-1})^{tt}},\\
		&D(u,v):=-\bangles{u}{{(g^{-1})}^{tr}\tr\,v}-2\bangles{u}{(\gamma^{-1})'\tr\,v},\\
		&E(u,v):=-\bangles{u}{(\gamma^{-1})''\tr\,v}.
	\end{split}
\end{align}
For any $\Theta:[0,T]\to H^1(\Omega_0)$ satisfying  
\begin{align}\label{eq:Thetaspaces1}
	\begin{split}
		&\Theta\in L^2([0,T];H^1(\Omega_0)),\quad \Theta'\in L^2([0,T],L^2(\Omega_0)),\quad (\tr\,\Theta)'\in L^2([0,T];L^2(\partial \Omega_0)),\\
		&\Phi(\Theta), \Phi(\Theta)', \Phi(\Theta)''\in L^2([0,T];(H^1(\Omega_0))^\ast),
	\end{split}
\end{align}
let
\begin{align}\label{eq:defcalL}
	\begin{split}
		\calL(\Theta,v):=B(\Theta,v)+C(\Theta',v)+D((\tr\,\Theta)',v)+E(\tr\,\Theta,v).
	\end{split}
\end{align}
The weak formulation of \eqref{eq:Thetaweaktemp1} is then given by
\begin{align}\label{eq:weak1}
	\begin{split}
		(\Phi(\Theta)'',v)+\calL(\Theta,v)=\bangles{\gamma^{-1}f}{\tr\,v}-\angles{F}{v},\qquad \forall v\in H^1(\Omega_0),
	\end{split}
\end{align}
for almost every $t\in [0,T].$  The initial conditions are 
\begin{align}\label{eq:weakdata1}
	\begin{split}
		&\Theta(0)=\theta_0\quad\mathrm{in~} L^2(\Omega_0),\\
		&(\Phi(\Theta)'(0),v)=\angles{\theta_1}{v}+\bangles{\tiltheta_1}{\tr\,v},\qquad \forall v\in H^1(\Omega_0),
	\end{split}
\end{align}
for given initial data
\begin{align*}
	\begin{split}
		\theta_0\in H^1(\Omega_0),\quad \theta_1\in L^2(\Omega_0), \quad \tiltheta_1\in L^2(\partial \Omega_0).
	\end{split}
\end{align*}
This formulation is obtained by multiplying the equation by a test function $\varphi$ and integrating with respect to $\ud x \ud t$. It is based on the energy estimates in Lemma~\ref{lem:Venergy1} which are the main ingredient in estimating $\Theta^I$.  We refer the reader to equations (3.6)--(3.11) in \cite{MSW1} for more details. We also need to consider the equations after commuting $\partial_{t}$ derivatives. For this purpose let $F_{0}=F$ and $f_{0}=\gamma^{-1}f$ denote the right-hand sides of the interior and boundary equations respectively. Similarly, let $F_{k}$ and $f_{k}$  denote the right-hand sides of these equations after commuting $\partial_{t}^k$, and let $\Theta_k$ be the $k$-times differentiated unknown\footnote{Note that as part of Proposition~\ref{prop:weak2} below we will show that indeed $\Theta_k=\partial_t^k\Theta$, so for now equation \eqref{eq:weak2} for $\Theta_k$ should be taken as the defining equation for $\Theta_k$.}. For $k\geq 1$, let the higher order source terms be defined by
\begin{align}\label{higher order source}
	\begin{split}
		F_{k}=&\partial_{t}^{k}F_{0}-\sum_{\ell=0}^{k-1}\partial_{t}^{\ell}\calC(\Theta_{k-\ell-1})-\sum_{\ell=0}^{k-2}(k-\ell-1)\partial_t^\ell\tiltilcalC(\Theta_{k-\ell-1}),\\
		\calF^{a}_{k}=&=-\sum_{\ell=0}^{k-1}\partial_{t}^{\ell}\calC^{a}(\Theta_{k-\ell-1}),\\
		f_{k}=&\partial_{t}^{k}f_{0}-\sum_{\ell=0}^{k-1}\partial_{t}^{\ell}\calC_{\calB}(\Theta_{k-\ell-1})-\sum_{\ell=0}^{k-2}(k-\ell-1)\partial_t^\ell\tiltilcalC_\calB(\Theta_{k-\ell-1}),
	\end{split}
\end{align}
where we have used the notation
\begin{align}\label{def:commutator weak}
	\begin{split}
		\angles{\calC(\Theta)}{v}&:=-\frac{1}{2}\angles{\partial_a\Theta}{ v\partial_{t}(g^{a\alpha}\partial_\alpha\log |g|)}-\angles{\partial_a\Theta}{v\partial^{2}_{t}g^{ta}}\\&\quad-\frac12\angles{\Theta'}{v\partial_{t}(g^{t\alpha}\partial_{\alpha}\log|g|)}+\angles{\Theta'}{v\partial_{t}\partial_{a}g^{ta}},\\
		\angles{\calC^{a}(\Theta)}{\partial_{a}v}&:=\angles{\partial_{t}g^{ab}\partial_{b}\Theta}{\partial_{a}v}+2\angles{\Theta'}{\left(\partial_{t}g^{ta}\right)\partial_{a}v},\\
		\angles{\tilcalC(\Theta)}{v}&:=\angles{\Theta'}{-v\partial_t (g^{-1})^{tt} },\\
		\angles{\tiltilcalC(\Theta)}{v}&:=\angles{\Theta'}{-v\partial_t^2(g^{-1})^{tt}},\\
		\bangles{\calC_{\calB}(\Theta)}{\tr\,v}&:=-\bangles{(\tr\,\Theta)'}{(\partial_{t}g^{tr})\tr\,v},\\
		\bangles{\tilcalC_{\calB}(\Theta)}{\tr\,v}&:=\bangles{(\tr\,\Theta)'}{(\gamma^{-1})'\tr\,v},\\
		\bangles{\tiltilcalC_\calB(\Theta)}{v}&:=\bangles{(\tr\,\Theta)'}{(\gamma^{-1})''\tr\,v}.
	\end{split}
\end{align}
The defining equation for $\Theta_k$ is then
\begin{align}\label{eq:weak2}
	\begin{split}
		&(\Phi(\Theta_{k})'',v)+\calL(\Theta_{k},v)+k\angles{\tilcalC(\Theta_k)}{v}+k\bangles{\tilcalC_\calB(\Theta_k)}{v}\\
		&=\angles{F_{k}}{v}+\angles{\calF^{a}_{k}}{\partial_{a}v}+\bangles{f_{k}}{\tr\,v},\qquad \forall v\in H^1(\Omega_0).
	\end{split}
\end{align}
This equation is derived by $k$ times differentiating the weak equation \eqref{eq:weak1}, under the assumption that $\Theta$ is sufficiently regular. The next result, which is the main linear theorem for the coupled system satisfied by $\Theta^I$, was proved\footnote{Even though \cite{MSW1} considered the case of the Minkowski background, since the linear theory was worked out in Lagrangian coordinates, the results derived there apply equally to the variable coefficient setting of the current work.} in \cite{MSW1}, in Propositions~3.2,~3.7 and~3.9. We will assume the following regularity assumptions on the coefficients, where $M$ is a sufficiently large integer:
\begin{align}\label{eq:g-assumption1}
	\begin{split}
		\partial^a\partial_t^k g\in L^\infty([0,T];L^2(\Omega_0)),\qquad &k\leq M+1,~ \begin{cases}2a\leq M+1-k,\quad &k\geq1\\ 2a\leq M,\quad &k=0\end{cases},\\
		\partial_t^k (\tr\,g)\in L^2([0,T];L^2(\partial \Omega_0)),\qquad &k\leq M,\\
		\partial_t^k \gamma^{-1},\partial_t^k\gamma\in L^2([0,T];L^2(\partial \Omega_0)),\qquad &k\leq M,\\
		\partial_t^k\gamma^{-1},\partial_t^k\gamma\in L^\infty([0,T];L^\infty(\partial \Omega_0)),\qquad &k\leq M-5,\\
		\partial_t^kf_{0}\in L^2([0,T];L^2(\partial \Omega_0)),\qquad&k\leq M,\\
		\partial_t^k F_{\sigma,0}\in L^{2}([0,T];L^2( \Omega_0)),\qquad&k\leq M.\\
	\end{split}
\end{align} 
Note that in our applications $g$ is related to the frame $\{e_I\}$ by the relation
\begin{align*}
	\begin{split}
		(g^{-1})^{\alpha\beta}= \sum_I \epsilon_I e_I^\alpha e_I^\beta.
	\end{split}
\end{align*}
\begin{proposition}\label{prop:weak2}
	Suppose \eqref{eq:g-assumption1} holds and that there are
	\begin{align*}
		\begin{split}
			\theta_k\in H^1(\Omega_0),\quad \theta_{k+1}\in L^2(\Omega_0), \quad \tiltheta_{k+1}\in L^2(\partial \Omega_0), \quad k=0,...,M
		\end{split}
	\end{align*}
	such that:
	\begin{itemize}
		\item  For $k=0,\dots,M-1$,
		\begin{align*}
			\begin{split}
				\angles{-(g^{-1})^{tt}\theta_{k+2}}{v}+\bangles{\tiltheta_{k+2}}{\gamma^{-1}\tr\,v}+\calL(\theta_{k},v)+k\bangles{\tilcalC_\calB(\theta_k)}{v}
				=\bangles{f_{k}(0)}{\tr\,v}+\angles{F_{k}(0)}{v}+\angles{\calF^{a}_{k}(0)}{\partial_av}.
			\end{split}
		\end{align*}
		\item$\tiltheta_k=\tr\,\theta_k$ for $k=1,\dots, M$.
	\end{itemize}
	Then there is a unique $\Theta_{k}$ satisfying \eqref{eq:Thetaspaces1} and \eqref{eq:weakdata1}, such that for all $v\in H^1(\Omega_0)$ equation \eqref{eq:weak2} holds for almost every $t\in[0,T]$. The solution $\Theta_k$ satisfies
	\begin{align}\label{eq:energy high}
		\begin{split}
			&\sup_{t\in [0,T]}\big(\|\Theta'_{k}\|_{L^2(\Omega_0)}+\|\Theta_{k}\|_{H^1(\Omega_0)}+\|\tr\,\Theta'_{k}\|_{L^2(\partial \Omega_0)}\big)\\
			&\leq C_1e^{C_2T}\Big(\|\theta_k\|_{H^1(\Omega_0)}+\|\theta_{k+1}\|_{L^2(\Omega_0)}+\|\tiltheta_{k+1}\|_{L^2(\partial \Omega_0)}+\|f_{k}\|_{L^2([0,T];L^2(\partial \Omega_0))}\\
			&\phantom{\leq C_1e^{C_2T}\Big(}+\|F_{k}\|_{L^2([0,T];L^2(\Omega_0))}+\|\calF^{a}_{k}\|_{L^{\infty}([0,T];L^2(\Omega_0))}\Big).
		\end{split}
	\end{align}
	Here $C_1$, $C_2$, and $C_3$ depend on the norms of $g,\tr\,g, \gamma, \gamma^{-1}$ appearing in \eqref{eq:g-assumption1}.  Moreover, we have $\Theta'_{k-1}=\Theta_{k}$ for $k=1,...,M$, and there are functions $P_k$ depending polynomially on their arguments such that for $k\leq M$ and $2a+k\leq M+2$ and 
	for $\tau\leq T$ 
	\begin{align}\label{eq:Sobolev-estimate1}
		&\|\partial^a\Theta_{k}(\tau)\|_{L^\infty([0,\tau];L^2(\Omega_0))}\leq P_k\bigg(\sup_{t\leq\tau}\sum_{\ell\leq 2a+k-2}(\|\nabla \Theta_{\ell}(t)\|_{L^{2}(\Omega_0)}+\| \Theta_{\ell+1}(t)\|_{L^2(\Omega_0)}+\| \Theta_{\ell+1}(t)\|_{L^2(\partial \Omega_0)}),\nonumber\\
		&\phantom{\|\partial^a\Theta_{k}(\tau)\|_{L^\infty([0,\tau];L^2(\Omega_0))}\leq P_k\bigg(}\|g\|_{L^\infty([0,\tau];H^{\max\{a-2,5\}}(\Omega_0))},\sum_{\ell\leq k}\|\partial_t^{\ell}f\|_{L^{\infty}([0,T];H^{a-\frac32}(\Omega_0))}\bigg).
	\end{align}
\end{proposition}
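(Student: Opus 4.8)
The statement collects three assertions --- existence and uniqueness for the weak problem \eqref{eq:weak2}, the energy bound \eqref{eq:energy high}, and the Sobolev bound \eqref{eq:Sobolev-estimate1} --- which are precisely Propositions~3.2,~3.7 and~3.9 of \cite{MSW1}, proved there in Lagrangian coordinates. The plan is to reproduce that argument, checking only that the variable-coefficient nature of $g$ and $\gamma$ is absorbed using \eqref{eq:g-assumption1}, and emphasizing the points where the background metric enters.

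First I would construct the solution by a Galerkin scheme. Fix $k$, take a basis $\{w_m\}$ of $H^1(\Omega_0)$ orthonormal in $L^2(\Omega_0)$, and look for $\Theta_k^N=\sum_{m=1}^N d_m^N(t)w_m$ solving the projection of \eqref{eq:weak2} onto $\mathrm{span}\{w_1,\dots,w_N\}$, with initial data obtained by projecting $\theta_k$ and, through the second line of \eqref{eq:weakdata1}, $\theta_{k+1}$ and $\tiltheta_{k+1}$. Since $-(g^{-1})^{tt}>0$ and $\gamma^{-1}>0$ on $[0,T]$ (a consequence of $e_\tilI^0|_{t=0}=0$ and continuity, as noted after \eqref{eq:W2}), the operator $\Phi$ restricts to a $t$-continuous isomorphism on each finite-dimensional subspace, so the projected problem is a linear second-order ODE system with continuous coefficients, uniquely solvable on $[0,T]$.

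Next I would derive uniform estimates by testing the projected equation with $(\Theta_k^N)'$. The crucial input is the structure of the bilinear forms in \eqref{eq:defbilin}, for which the boundary terms with unfavorable sign cancel --- this is exactly the mechanism behind the energy identity \eqref{eq:Venergy1} of Lemma~\ref{lem:Venergy1}. Using the time-regularity of the coefficients from \eqref{eq:g-assumption1} to control the terms produced when $\partial_t$ falls on $g$ or $\gamma$, Gr\"onwall then yields \eqref{eq:energy high} uniformly in $N$. Passing to weak-$*$ limits in the spaces of \eqref{eq:Thetaspaces1}, lower semicontinuity transfers \eqref{eq:energy high} to the limit, and a density argument shows the limit solves \eqref{eq:weak2} with the prescribed data; uniqueness follows by applying the same energy identity to the difference of two solutions with vanishing data. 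To identify $\Theta'_{k-1}=\Theta_k$ (hence $\Theta_k=\partial_t^k\Theta$), I would verify, using the two compatibility relations in the hypothesis --- which say precisely that the data are obtained by formally differentiating the equation --- that $\partial_t\Theta_{k-1}$ solves the level-$k$ weak equation with data $(\theta_k,\theta_{k+1},\tiltheta_{k+1})$, and then invoke uniqueness.

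Finally, for the Sobolev bound \eqref{eq:Sobolev-estimate1} I would argue by induction on the number $a$ of spatial derivatives, the base case being \eqref{eq:energy high}. Given control of the relevant quantities up to order $2a+k-2$, I would rewrite the interior equation $\Box\Theta_k=F_k$ via Lemma~\ref{lem:BoxA} as $\barA\Theta_k=F_k+(\text{terms with at most one }\partial_t)$, commute tangential derivatives, use the boundary equation $(\partial_t^2+\gamma D_n)\Theta_k=f_k$ to express $D_n\Theta_k$ through $\partial_t^2\Theta_k$ and $f_k$, and apply the Neumann elliptic estimate \eqref{eq:ellipticlemNeu1} of Lemma~\ref{lem:2nd order elliptic esti}; this trades two spatial derivatives for two time derivatives plus data and closes the induction with the stated numerology $2a+k\leq M+2$. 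The hard part throughout is the nonstandard second-order-in-time (``hyperbolic Dirichlet--Neumann'') boundary condition: the Galerkin scheme must be set up so that the boundary energy $\|\tr\,(\Theta_k^N)'\|_{L^2(\partial\Omega_0)}$ is genuinely controlled, and every commutation with $\partial_t$ or with a spatial derivative must be checked to produce only lower-order or favorably-signed contributions --- the bookkeeping carried out in \cite{MSW1}. Once this is in place the variable coefficients introduce nothing new, since all additional terms are absorbed using \eqref{eq:g-assumption1} and Gr\"onwall.
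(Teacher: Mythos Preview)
Your proposal is correct and matches the paper's approach exactly: the paper does not give an independent proof but simply cites Propositions~3.2,~3.7 and~3.9 of \cite{MSW1}, remarking (in a footnote) that since the linear theory there was worked out in Lagrangian coordinates it applies verbatim to the variable-coefficient setting of the present work. Your outline --- Galerkin construction, the energy identity underlying Lemma~\ref{lem:Venergy1} to get \eqref{eq:energy high}, uniqueness and the identification $\Theta_{k-1}'=\Theta_k$ from compatibility, and the elliptic induction via Lemmas~\ref{lem:BoxA} and~\ref{lem:2nd order elliptic esti} for \eqref{eq:Sobolev-estimate1} --- is precisely the content of those cited propositions, and your observation that \eqref{eq:g-assumption1} absorbs all coefficient dependence is exactly the point the paper makes.
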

Before turning to the equation for $\partial_t\sigma^2$ we also record the analogue of Lemma~\ref{lem:nablaVGamma}, which is proved in Lemma~3.12 of \cite{MSW1}.
\begin{lemma}\label{lem:voblique}
	Under the assumptions of Proposition~\ref{prop:weak2}, and with the same notation, for any $k \leq M-1$
	\begin{align*}
		\begin{split}
			\|\nabla\Theta_k\|_{L^2([0,T];L^2(\partial \Omega_0))}^2&\lesssim \sum_{j\leq k}\Big(\|F_j\|_{L^2([0,T];L^2(\Omega_0))}^2+\|\calF_j\|_{L^2([0,T];L^2(\Omega_0))}^2+\|f_j\|_{L^2([0,T];L^2(\partial \Omega_0))}^2\Big)\\
			&\quad+\|\theta_k\|_{H^1(\Omega_0)}^2+\|\theta_{k+1}\|_{L^2(\Omega_0)}^2+\|\tiltheta_{k+1}\|_{L^2(\partial \Omega_0)}^2\\
			&\quad+\|(\tr\,\Theta_k)'\|_{L^2([0,T];L^2(\partial \Omega_0))}^2+\|(\tr\,\Theta_k)''\|_{L^2([0,T];L^2(\partial \Omega_0))}^2,
		\end{split}
	\end{align*}
	where the implicit constant depends only on $g$, $\gamma$, and their first three derivatives.
\end{lemma}
The discussion for the linearized equation for $\partial_t\sigma^2$ is similar. For this we use $\Lambda$ to denote the linearized unknown. Consider the same bilinear forms $B$ and $C$ as above but with restricted domains:
\begin{align*}
	\begin{split}
		B:H^1_0(B)\times H^1_0(\Omega_0)\to \bbR\qquad \mathrm{and} \qquad C:L^2(\Omega_0)\times H^1_0(\Omega_0)\to \bbR.
	\end{split}
\end{align*}
Let $\iota$ be the standard embedding of $H^1_0(\Omega_0)$ in $H^{-1}(\Omega_0):=(H^1_0(\Omega_0))^\ast$:
\begin{align*}
	\begin{split}
		(\iota(u),v):=\angles{u}{v},\qquad u\in H^1_0(\Omega_0),\quad v\in H^{1}_0(\Omega_0).
	\end{split}
\end{align*}
We will simply write $u$ for $\iota(u)$ from now on. For any $\Lambda$ with
\begin{align}\label{eq:Lambdaspaces1}
	\begin{split}
		&\Lambda\in L^2([0,T];H^1_0(\Omega_0)),\quad \Lambda'\in L^2([0,T],L^2(\Omega_0)),\quad\Lambda''\in L^2([0,T];H^{-1}(\Omega_0)),
	\end{split}
\end{align}
and $v\in H^1_0(\Omega_0)$ let
\begin{align*}
	\begin{split}
		\calL_\sigma(\Lambda,v):=B(\Lambda,v)+C(\Lambda',v).
	\end{split}
\end{align*}
The weak equation for $\Lambda$ is
\begin{align}\label{eq:weakLambda1}
	\begin{split}
		(-(g^{-1})^{tt}\Lambda'',v)+\calL_\sigma(\Lambda,v)=\angles{F_\sigma}{v},\qquad \forall v\in H^1_0(\Omega_0),
	\end{split}
\end{align}
where $F_\sigma\in L^2([0,T];L^2(\Omega_0))$ corresponds to the linearized source term. The initial conditions are
\begin{align}\label{eq:Lambdaid}
	\begin{split}
		\Lambda(0)=\lambda_0,\qquad \quad ((\Lambda)'(0),v)=\angles{\lambda_1}{v},\quad\forall v\in H^1_0(\Omega_0),
	\end{split}
\end{align}
where  $\lambda_0\in H^1_0(\Omega_0)$ and $\lambda_1\in L^2(\Omega_0)$ are given initial data. The time differentiated equations for the higher order unknown  $\Lambda_k$ are also derived as above.  Let
\begin{align}\label{eq:Lambdalinsource}
	\begin{split}
		F_{\sigma,0}&:=F_\sigma,\qquad \calF^{a}_{\sigma,0}:=0,\\
		F_{\sigma,k}=&\partial_{t}^{k}F_{\sigma,0}-\sum_{\ell=0}^{k-1}\partial_{t}^{\ell}\calC(\Lambda_{k-\ell-1})-\sum_{\ell=0}^{k-2}(k-\ell-1)\partial_t^\ell\tiltilcalC(\Lambda_{k-\ell-1}),\\
		\calF^{a}_{\sigma,k}=&-\sum_{\ell=0}^{k-1}\partial_{t}^{\ell}\calC^{a}(\Lambda_{k-\ell-1}),\\
	\end{split}
\end{align}
The defining equation for $\Lambda_k$ is then
\begin{align}\label{eq:Lambdalinweak2}
	\begin{split}
		(-(g^{-1})^{tt}\Lambda_k'',v)+\calL_\sigma(\Lambda_{k},v)=\angles{F_{\sigma,k}}{v}+\angles{\calF^{a}_{\sigma,k}}{\partial_{a}v},\qquad \forall v\in H^1_0(\Omega_0).
	\end{split}
\end{align}
The next result which is the analogue of Proposition~\ref{prop:weak2} for $\Lambda$ is proved in Lemma~3.10 and Propositions~3.11 in \cite{MSW1}.
\begin{proposition}\label{prop:weaksigma}
	Suppose \eqref{eq:g-assumption1} holds and that there are
	\begin{align*}
		\begin{split}
			\lambda_k\in H^1_0(\Omega_0),\quad \lambda_{k+1}\in L^2(\Omega_0),  \quad k=0,...,M
		\end{split}
	\end{align*}
	such that
	\begin{align*}
		\begin{split}
			\angles{-(g^{-1})^{tt}\lambda_{k+2}}{v}+\calL_\sigma(\lambda_{k},v)	
			=\angles{F_{\sigma,k}(0)}{v}+\angles{\calF^{a}_{\sigma,k}(0)}{\partial_av}.
		\end{split}
	\end{align*}
	Then there is a unique $\Lambda_{k}$ satisfying \eqref{eq:Lambdaspaces1} and \eqref{eq:Lambdaid}, such that for all $v\in H^1_0(\Omega_0)$ equation \eqref{eq:Lambdalinweak2} holds for almost every $t\in[0,T]$. The solution satisfies
	\begin{align}\label{eq:lambda-energy}
		\begin{split}
			&\sup_{t\in [0,T]}\big(\|\Lambda'_{k}\|_{L^2(\Omega_0)}+\|\Lambda_{k}\|_{H^1(\Omega_0)}\big)+\|\nabla \Lambda_k\|_{L^2([0,T];L^2(\partial \Omega_0))}^2\\
			&\leq C_1e^{C_2T}\Big(\|\lambda_k\|_{H^1(\Omega_0)}+\|\lambda_{k+1}\|_{L^2(\Omega_0)}+\|F_{\sigma,k}\|_{L^2([0,T];L^2(\Omega_0))}+\|\calF_{\sigma,k}\|_{L^{\infty}([0,T];L^2(\Omega_0))}\Big).
		\end{split}
	\end{align}
	Here the constants $C_1$, $C_2$, and $C_3$ depend on norms of $g$ appearing in \eqref{eq:g-assumption1}.  Moreover,  $\Lambda'_{k-1}=\Lambda_{k}$ for $k=1,...,M$, and there are functions $P_k$ depending polynomially on their arguments such that for $k\leq M$ and $2a+k\leq M+2$, and for $\tau\leq T$,  
	\begin{align}\label{eq:LambdaSobolev-estimate1}
		\begin{split}
			&\|\partial^a\Lambda_k(\tau)\|_{L^\infty([0,\tau];L^2(\Omega_0))}\leq P_k\bigg(\sup_{t\leq\tau}\sum_{\ell\leq 2a+k-2}(\|\nabla \Lambda_{\ell}(t)\|_{L^{2}(\Omega_0)}+\| \Lambda_{\ell+1}(t)\|_{L^2(\Omega_0)}),\\
			&\phantom{\|\partial^a\Lambda_k(\tau)\|_{L^\infty([0,\tau];L^2(\Omega_0))}\leq P_k\bigg(}\|g\|_{L^\infty([0,\tau];H^{\max\{a-2,5\}}(\Omega_0))},\sum_{\ell\leq k}\|\partial_{t}^{\ell}F_{\sigma}\|_{L^{2}([0,T];H^a(\Omega_0))}\bigg).
		\end{split}
	\end{align}
\end{proposition}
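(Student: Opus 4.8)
\textbf{Plan of proof (Proposition~\ref{prop:weaksigma}).} The statement is the analogue for the scalar $\partial_t\sigma^2$ of Proposition~\ref{prop:weak2}, and since $\partial_t\sigma^2$ vanishes on $\partial\Omega$ the relevant function space is $H^1_0(\Omega_0)$ rather than $H^1(\Omega_0)$, which actually simplifies matters: there is no boundary bilinear form and no Sobolev trace to track. The plan is to reduce everything to the abstract existence/uniqueness theory for weak solutions of second order hyperbolic equations (Lions--Magenes / Galerkin; cf. \cite{Taylor-book1}) together with the basic energy identity of Lemma~\ref{lem:sigmaenergy} applied slicewise, exactly as was done in \cite{MSW1}, Lemma~3.10 and Proposition~3.11.

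\textbf{Step 1: existence and uniqueness for fixed $k$.} Fix $k$ and treat \eqref{eq:Lambdalinweak2} with the prescribed initial data \eqref{eq:Lambdaid}. Under the coefficient assumptions \eqref{eq:g-assumption1}, the leading coefficient $-(g^{-1})^{tt}$ is bounded below by a positive constant (this is the positivity used repeatedly above, coming from $e_\tilI^0$ being small on $\{t=0\}$ and by continuity), $B(\cdot,\cdot)$ is bounded and satisfies a Gårding inequality on $H^1_0(\Omega_0)$ because $\gamma^{ij}=(g^{-1})^{ij}$ is positive definite by Lemma~\ref{lem: gamma positivity}, and $C(\cdot,\cdot)$ is a lower order perturbation. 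Hence the standard Galerkin construction (project onto finite-dimensional subspaces spanned by an $H^1_0$ eigenbasis, solve the resulting ODE system, obtain uniform energy bounds, pass to the limit) produces a solution $\Lambda_k$ in the class \eqref{eq:Lambdaspaces1}, and the energy identity gives uniqueness. The compatibility hypothesis $\angles{-(g^{-1})^{tt}\lambda_{k+2}}{v}+\calL_\sigma(\lambda_k,v)=\angles{F_{\sigma,k}(0)}{v}+\angles{\calF^a_{\sigma,k}(0)}{\partial_a v}$ is exactly what is needed to make the initial value $\Lambda_k''(0)=\lambda_{k+2}$ (in the appropriate weak sense) consistent, which in turn is what allows the chain $\Lambda'_{k-1}=\Lambda_k$ to be established by differentiating \eqref{eq:weakLambda1} and invoking uniqueness of the resulting problem.

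\textbf{Step 2: the energy estimate \eqref{eq:lambda-energy}.} Take $v=\Lambda_k'$ (rigorously, work at the Galerkin level or use a difference-quotient/regularization argument since $\Lambda_k'$ is only $L^2$ in space) and integrate in time. The top-order terms combine to $\tfrac{d}{dt}$ of the energy $\|\sqrt{-(g^{-1})^{tt}}\Lambda_k'\|_{L^2}^2+B(\Lambda_k,\Lambda_k)$ plus terms where $\partial_t$ falls on the coefficients; those are controlled by $\sup_t(\|\Lambda_k'\|_{L^2}^2+\|\Lambda_k\|_{H^1}^2)$ times an $L^1_t$ bound on time derivatives of the coefficients, which is finite by \eqref{eq:g-assumption1}. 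The source terms are absorbed by Cauchy--Schwarz. Gronwall then yields the $C_1e^{C_2T}$ bound; the interior-to-boundary term $\|\nabla\Lambda_k\|_{L^2([0,T];L^2(\partial\Omega_0))}^2$ is obtained from the same computation but with the multiplier of Lemma~\ref{lem:sigmaenergy}, namely $Q=\partial_t-\alpha n$ with $\alpha>0$ chosen small, whose Dirichlet-boundary version produces the favorable $\tfrac{\alpha}{2}\int(D_n\Lambda_k)^2$ boundary term on the left (since $\Lambda_k$ vanishes on $\partial\Omega$, $(g^{-1})^{\alpha\beta}\partial_\alpha\Lambda_k\partial_\beta\Lambda_k=(D_n\Lambda_k)^2$ there, and tangential derivatives of $\Lambda_k$ vanish on the boundary).

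\textbf{Step 3: higher spatial regularity \eqref{eq:LambdaSobolev-estimate1}.} This is the bootstrap step: having controlled $\partial_t^\ell\Lambda$ in $H^1$ for $\ell$ up to order $k$ via Steps 1--2, one uses the interior equation \eqref{eq:weakLambda1} read as an elliptic equation with Dirichlet data (as in Lemma~\ref{lem:2nd order elliptic esti}, estimate \eqref{eq:ellipticlemDir1}) to trade two spatial derivatives for one time derivative, inducting on the spatial order $a$ as in the proof of Proposition~\ref{prop:elliptic}. At each stage the source terms $F_{\sigma,k}$, $\calF^a_{\sigma,k}$ are expanded using \eqref{eq:Lambdalinsource} and \eqref{def:commutator weak}, and the commutator terms $\calC(\Lambda_{k-\ell-1})$ etc.\ involve only strictly lower order quantities, so they are already controlled by the induction hypothesis together with the coefficient norms $\|g\|_{H^{\max\{a-2,5\}}}$. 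Keeping careful track of the numerology $2a+k\leq M+2$ ensures every term that appears is covered by the previously established bounds; the polynomial dependence is recorded in the functions $P_k$. I expect the main obstacle to be the bookkeeping in this last step — matching the precise orders of derivatives appearing in the expansions \eqref{def:commutator weak} against the inductive hypotheses and the coefficient regularity \eqref{eq:g-assumption1}, and justifying the elliptic-gain argument at the weak level — rather than any conceptual difficulty, since the scalar Dirichlet problem is genuinely easier than the oblique-derivative problem for $\Theta$ already handled in Proposition~\ref{prop:weak2}. Indeed, as remarked in the statement, all of these assertions are established in Lemma~3.10 and Proposition~3.11 of \cite{MSW1}, whose proofs apply verbatim once one notes (as in the footnote preceding Proposition~\ref{prop:weak2}) that the Lagrangian-coordinate arguments there do not use flatness of the background.
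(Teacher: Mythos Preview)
Your proposal is correct and follows essentially the same approach as the paper, which simply cites Lemma~3.10 and Proposition~3.11 of \cite{MSW1} for this result; your sketch accurately expands what those references contain (Galerkin existence, the multiplier $Q=\partial_t-\alpha n$ of Lemma~\ref{lem:sigmaenergy} for the energy and boundary flux, and the Dirichlet elliptic bootstrap of Lemma~\ref{lem:2nd order elliptic esti} for higher spatial regularity), and you correctly observe that the Lagrangian-coordinate arguments there carry over verbatim to the variable-coefficient setting.
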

The last equation we use for the iteration is the Maxwell system \eqref{Maxwell-inhomo1}, \eqref{Maxwell-inhomo2}  derived from the Bianchi equations. During the iteration we decompose $F$ in a slightly modified frame compared with \eqref{eq:Maxwell1}--\eqref{eq:Maxwell2}, given as follows. $\hate_1$ is by definition $\che_0=\hatV=\partial_t$, while $\che_\tilI$, $\tilI=1,2,3$, are given by applying Gram-Schmidt to $\{e_1,e_2,e_3\}$, that is (here for a spacelike vector $v$ we use the notation $\|v\|=\sqrt{g(v,v)}$),
\begin{align}\label{eq:chedef1}
	\begin{split}
		&\che_3 = (e_3+g(e_3,\che_0)\che_0)/\|e_3+g(e_3,\che_0)\che_0 \|,\\
		&\che_2=(e_2+g(e_2,\che_0)\che_0-g(e_2,\che_3)\che_3)/\|e_2+g(e_2,\che_0)\che_0-g(e_2,\che_3)\che_3\|,\\
		&\che_1=(e_1+g(e_1,\che_0)\che_0-g(e_1,\che_3)\hate_3-g(e_1,\che_2)\che_2)/\|e_1+g(e_1,\che_0)\che_0-g(e_1,\che_3)\che_3-g(e_1,\hate_2)\che_2\|.
	\end{split}
\end{align}
By a slight abuse of notation we denote contractions with $\che_I$ with a check, so for instance we write (more precisely we should write $\chF^{AB}_{IJ}=F(\che_I,\che_J,X_A,X_B)$)
\begin{align*}
	\begin{split}
		\chF_{IJ}= F(\che_I,\che_J), \quad  \chGamma_{IJ}^K\che_K=\nabla_{\che_I}\che_J, \quad \chD_I= \che_I^\mu \partial_\mu,\quad \mathrm{etc}.
	\end{split}
\end{align*}
Note that since $g(e_J,\che_0)= \epsilon_J\hatTheta^J$, the frame $\{\che_I\}$ is a linear combination of $\{e_I\}$ with coefficients which depend algebraically on $\hatTheta$, and therefore have the same regularity properties. The electric and magnetic parts of $\chF$ are denoted by $\chE$ and $\chH$ and are given by
\begin{align*}
	\begin{split}
		\chE_I=\chF_{0I},\qquad \chH^I=-\frac{1}{2}\sum_{J,K=1}^3\epsilon^{IJK}\chF_{JK},\quad I=1,2,3.
	\end{split}
\end{align*}
For $\chE$ and $\chH$ we formally define the following first order differential operators (where $\chnabla = \chD\pm\chGamma$)
\begin{align}\label{def chdiv chcurl}
	\begin{split}
		&\chdiv \chE:=\sum_{\tilI=1}^{3}\chnabla_{\tilI}\chE_{\tilI},\quad \chdiv\chH:=\sum_{\tilI=1}^{3}\chnabla_{\tilI}\chH^{\tilI},\\ &(\chcurl\chE)_{\tilI}:=\sum_{\tilJ,\tilK=1}^{3}\epsilon_{\tilI\tilJ\tilK}\chnabla_{\tilJ}\chE_{\tilK},\quad (\chcurl\chH)^{\tilI}:=\sum_{\tilJ,\tilK=1}^{3}\epsilon_{\tilI\tilJ\tilK}\chnabla_{\tilJ}\chH^{\tilK}.
	\end{split}
\end{align}
Here $\epsilon_{\tilI\tilJ\tilK}=1$ if $(\tilI\tilJ\tilK)$ is an even permutation of $(123)$, $\epsilon_{\tilI\tilJ\tilK}=-1$ if $(\tilI\tilJ\tilK)$ is an odd permutation of $(123)$, and $\epsilon_{\tilI\tilJ\tilK}=0$ otherwise. Even though these are not geometric divergence and curl operators by a slight abuse of notation we continue to refer to them as such.
The reason we decompose $F$ using $\{\che_I\}$ rather than $\{e_I\}$ is that in proving elliptic estimates for $\chE$ and $\chH$ we need to use the divergence equations satisfied by them, which is derived by taking the divergence of the evolution equations. For this purpose, in order to show that the divergences of $\chE$ and $\chH$ are lower order during the iteration, we find it more convenient if $\che_0$ coincides with $\partial_t$. We turn to the details. The equations satisfied by $\chE$ and $\chH$ are
\begin{align}\label{eq:checkEH1}
	\begin{split}
		\partial_t\chE+\chcurl \chH= \chcalI,\qquad \partial_t\chH-\chcurl \chE=\chcalJ^\ast,
	\end{split}
\end{align}
where $\chcalI$ and $\chcalJ$ are defined as in \eqref{eq:calIint1}, \eqref{eq:calIext1}, \eqref{eq:calJ1} with $D$, $\Theta$, $\Gamma$, $F$, $X$, $R$ replaced by $\chD$, $\chTheta$, $\chGamma$, $\chF$, $\chX$, $\chR$ respectively. 
		In terms of $\chW=(\chE,\chH)$ and $\chK=(\chcalI,\chcalJ^\ast)$, equation \eqref{eq:checkEH1} becomes the first order symmetric hyperbolic system
		\begin{align}\label{eq:checkW1}
			\begin{split}
				\sum_{\nu=0}^3 \chcalB^\mu \partial_\mu \chW=\chcalK, \quad \chcalB^0:=1+\sum_{\tilI=1}^3 \che_\tilI^0\calA^\tilI, \quad \chcalB^j:=\sum_{\tilI=1}^3\che_\tilI^j \calA^\tilI,~j=1,2,3.
			\end{split}
		\end{align}
		We will use the following standard result for the iteration for $\chW$.
		\begin{proposition}\label{prop:chW1}
			Suppose $\che_\tilI$, $\tilI=1,2,3$, and $\chcalK$ satisfy the following conditions:
			\begin{align}\label{eq:che-assumption1}
				\begin{split}
					1-\big(\sum_{\tilI=1}^3(\che_\tilI^0)^2\big)^{\frac{1}{2}}\geq \kappa >0,\qquad&\\
					\partial_t^k \che_I\in L^\infty([0,T];L^2(\bbR^3)),\qquad &k\leq M,\\
					\partial^a\partial_t^k\che_I\in L^\infty([0,T];L^2(\Omega_0)\cap L^2(\Omega_0^c)),\qquad &2a+k\leq M,\\
					\partial_t^k\chcalK\in L^2([0,T]\times \bbR^3),\qquad &k\leq M,\\
					\partial_t^k\chcalK\in L^{\infty}([0,T];H^{a-1}(\Omega_0)\cap H^{a-1}(\Omega_0^c)),\qquad &2a+k\leq M,\\
					\partial_{t}^{\ell}(\chdiv\chcalK_\chE,\chdiv\chcalK_\chH)\in L^{\infty}([0,T];H^{a-1}(\Omega_0)\cap H^{a-1}(\Omega_0^c)),\qquad &2a+k\leq M.
				\end{split}
			\end{align}
			Let $\chcalB^\mu$ be defined as in \eqref{eq:checkW1}. Then there is a unique solution $\chW=(\chE,\chH)$ to 
			\begin{align}\label{eq:chWlin1}
				\begin{split}
					\sum_{\nu=0}^3 \chcalB^\mu \partial_\mu \chW=\chcalK=(\chcalK_\chE,\chcalK_\chH),
				\end{split}
			\end{align} 
			which in addition satisfies (below the term $\|\partial\partial_t^{k-1}\chW\|_{L^2(\bbR^3)}$ on the left is present only for $k\geq1$)
			\begin{align}\label{eq:chWlinbound1}
				\begin{split}
					&\sup_{t\in[0,T]}(\|\partial_t^k\chW(t)\|_{L^2(\bbR^3)}+\|\partial\partial_t^{k-1}\chW(t)\|_{L^2(\bbR^3)})\\
					&\leq C_1e^{C_2T}(\|\partial_t^k\chW(0
					)\|_{L^2(\bbR^3)}+\|\partial\partial_t^{k-1}\chW(0)\|_{L^2(\bbR^3)}+\|\partial_t^k\chcalK\|_{L^2([0,T]\times\bbR^3)}),
				\end{split}
			\end{align}
			for $k\leq M$. Moreover, there are functions $P_k$ depending polynomially on their arguments such that for $k\leq M$ and $2a+k\leq M$, and $\tau\leq T$, 
			\begin{align}\label{eq:chWlinbound2}
				\begin{split}
					&\sup_{t\in[0,\tau]}(\|\partial^a\partial_t^k\chW(t)\|_{L^2(\Omega_0)}+\|\partial^a\partial_t^k\chW(t)\|_{L^2(\Omega_0^c)})\\
					&\leq P_k\bigg(\sup_{t\leq\tau}(\sum_{\ell\leq 2a+k}\|\partial_t^\ell \chW(t)\|_{L^{2}(\bbR^3)}+\sum_{\ell\leq 2a+k-1}\|\partial\partial_t^\ell \chW(t)\|_{L^{2}(\bbR^3)}),\\
					&\phantom{P_k\bigg(}\quad\sum_{\ell\leq k}\|\partial_{t}^{\ell}\chcalK\|_{L^{\infty}([0,T];H^{a-1}(\Omega_0^c))},\,\sum_{\ell\leq k}\|\partial_{t}^{\ell}\chcalK\|_{L^{\infty}([0,T];H^{a-1}(\Omega_0^c))},\\
					&\phantom{P_k\bigg(}\quad\sum_{\ell\leq k}\|\partial_{t}^{\ell}(\chdiv\chcalK_\chE,\chdiv\chcalK_\chH)\|_{L^{\infty}([0,T];H^{a-1}(\Omega_0^c))},\,\sum_{\ell\leq k}\|\partial_{t}^{\ell}(\chdiv\chcalK_\chE,\chdiv\chcalK_\chH)\|_{L^{\infty}([0,T];H^{a-1}(\Omega_0^c))}\bigg).
				\end{split}
			\end{align}
			Here the constants in \eqref{eq:chWlinbound1} and \eqref{eq:chWlinbound2} depend on $\kappa$ and the norms of $\che_I$ appearing in~\eqref{eq:che-assumption1}.
		\end{proposition}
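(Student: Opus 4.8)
The proof of Proposition~\ref{prop:chW1} is a standard energy argument for a first order symmetric hyperbolic system, combined with elliptic estimates to recover spatial derivatives from the divergence-curl structure. I will describe the main steps.

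\textbf{Step 1: Existence and basic energy estimate.} First note that by \eqref{eq:che-assumption1} the matrix $\chcalB^0=1+\sum_{\tilI}\che_\tilI^0\calA^\tilI$ is positive definite uniformly in $t$, since the eigenvalues of $\sum_{\tilI}\che_\tilI^0\calA^\tilI$ are $0$ and $\pm i|\che^0|$ with $|\che^0|=(\sum_\tilI(\che_\tilI^0)^2)^{1/2}\leq 1-\kappa$ (the $\calA^\tilI$ have purely imaginary spectrum since the system \eqref{eq:EH1} is symmetric hyperbolic, so $\chcalB^0$ has eigenvalues $1$ and $1\pm i|\che^0|$, hence is invertible and, being a small perturbation of the identity in the relevant operator norm, positive definite after symmetrization by $\chcalB^0=(\chcalB^0)^\ast$ which holds here since each $\calA^\tilI$ is symmetric). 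The matrices $\chcalB^\mu$ are symmetric, so \eqref{eq:chWlin1} is a symmetric hyperbolic system with coefficients in the regularity class dictated by \eqref{eq:che-assumption1}, and the classical linear theory (e.g. \cite{Taylor-book1}) yields a unique solution $\chW\in C([0,T];L^2(\bbR^3))$ with the initial data prescribed. Multiplying \eqref{eq:chWlin1} by $\chW$, integrating over $\bbR^3$, integrating by parts, and using that $\partial_\mu \chcalB^\mu$ is bounded (which follows from the $L^\infty$ bounds on $\che_I$ and their derivatives implied by \eqref{eq:che-assumption1} together with Sobolev embedding) gives $\frac{d}{dt}\int \langle \chcalB^0\chW,\chW\rangle \lesssim \int(|\chcalK|^2+|\chW|^2)$, and Gr\"onwall together with the uniform positivity of $\chcalB^0$ yields the $k=0$ case of \eqref{eq:chWlinbound1}. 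For $k\geq 1$ one commutes $\partial_t^k$ through \eqref{eq:chWlin1}; the commutator terms $[\partial_t^k,\chcalB^\mu]\partial_\mu\chW$ involve at most $k$ time derivatives of $\che_I$ (controlled by \eqref{eq:che-assumption1}) paired with at most $\partial\partial_t^{k-1}\chW$, and the resulting quantity is absorbed into the left side after noting that the spatial derivative $\partial\partial_t^{k-1}\chW$ appearing is itself controlled: from the equation $\partial_t^k\chW=(\chcalB^0)^{-1}(\chcalK-\sum_j\chcalB^j\partial_j\partial_t^{k-1}\chW-\ldots)$ one can trade one time derivative for one space derivative, and a simultaneous induction on $k$ (closing on $\|\partial_t^k\chW\|_{L^2}+\|\partial\partial_t^{k-1}\chW\|_{L^2}$) gives \eqref{eq:chWlinbound1}.

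\textbf{Step 2: Elliptic estimates for the higher spatial derivatives.} To obtain \eqref{eq:chWlinbound2} we no longer work on all of $\bbR^3$ but separately on $\Omega_0$ and on $\Omega_0^c$, since $\chcalK$ is only piecewise regular. On each of these domains $\chE,\chH$ satisfy, in addition to the evolution equations \eqref{eq:checkEH1} (which express $\chcurl\chE,\chcurl\chH$ in terms of $\partial_t\chW$ and $\chcalK$), the divergence equations obtained by applying $\chdiv$ to \eqref{eq:checkEH1} and using $\chdiv\circ\chcurl=0$ up to lower order curvature-dependent terms; this is exactly the mechanism that makes $\chdiv\chE$ and $\chdiv\chH$ solve transport equations with source $\chdiv\chcalK$, whence the hypothesis $\partial_t^\ell(\chdiv\chcalK_\chE,\chdiv\chcalK_\chH)\in L^\infty H^{a-1}$ in \eqref{eq:che-assumption1} together with Step 1 controls $\partial^{a-1}\partial_t^k(\chdiv\chE,\chdiv\chH)$. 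One then runs a div-curl elliptic estimate of the type in Lemmas~\ref{prop: H1 div curl sys 1}--\ref{prop: H1 div curl sys 2} (or more simply, since $\che_0=\partial_t$ here so the curl and divergence are with respect to a genuine three-dimensional elliptic system with coefficients the $\che_\tilI$): commuting $\partial^{a}\partial_t^k$ through the div-curl system, the top order term is controlled by $\|\partial_t^k\chW\|_{H^a}\lesssim \|\text{div}\|+\|\text{curl}\|+\text{lower order}$, where $\|\text{curl}\|$ is traded via \eqref{eq:checkEH1} for $\partial_t\partial^{a-1}\partial_t^{k}\chW$, i.e. one more time derivative and one fewer space derivative. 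Inducting on $a$ (reducing the number of spatial derivatives at the cost of time derivatives, staying within the budget $2a+k\leq M$) and feeding in \eqref{eq:chWlinbound1} closes \eqref{eq:chWlinbound2}. The commutator terms in this procedure involve spatial and time derivatives of $\che_I$ up to the order allowed in \eqref{eq:che-assumption1}, paired with lower order $\chW$, and go into the polynomial $P_k$.

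\textbf{Main obstacle.} The routine energy estimate is immediate; the delicate point is organizing the double induction (on the number of time derivatives $k$ in Step 1 and on the number of spatial derivatives $a$ in Step 2) so that the numerology $2a+k\leq M$ is respected at every stage, in particular checking that each trade ``one space derivative $\leftrightarrow$ one time derivative via the equation'' keeps the commutator terms strictly below the top order, and that the divergence equations genuinely gain the regularity claimed (i.e.\ that the curvature-dependent lower order terms produced when commuting $\chdiv$ past $\chcurl$ are indeed lower order in the induction, which uses the structure of $\chcalI,\chcalJ$ from \eqref{eq:calIint1}, \eqref{eq:calIext1}, \eqref{eq:calJ1}). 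This is the same bookkeeping as in the a priori estimates of Section~\ref{sec:apriori} and Proposition~\ref{prop:elliptic}, so it can be carried out by the arguments there; we omit the routine details.
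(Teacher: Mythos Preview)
Your overall architecture---symmetric hyperbolic energy estimate plus div--curl elliptic recovery, with a transport equation for $\chdiv\chE,\chdiv\chH$ sourced by $\chdiv\chcalK$---matches the paper's proof. There are, however, two concrete errors.

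\textbf{Eigenvalues.} You write that $\sum_\tilI \che_\tilI^0\calA^\tilI$ has eigenvalues $0,\pm i|\che^0|$ because ``the $\calA^\tilI$ have purely imaginary spectrum.'' This is wrong and internally inconsistent: as you yourself note, each $\calA^\tilI$ in \eqref{eq:Amatdef1} is real symmetric, so any real combination has \emph{real} spectrum. The eigenvalues of $\chcalB^0-I$ are $0$ and $\pm\big(\sum_\tilI(\che_\tilI^0)^2\big)^{1/2}$, all real, and the hypothesis $1-\big(\sum_\tilI(\che_\tilI^0)^2\big)^{1/2}\ge\kappa$ then gives directly that $\chcalB^0$ has spectrum in $[\kappa,2-\kappa]$, hence is uniformly positive definite. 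Your conclusion survives, but the argument as written is incorrect.

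\textbf{Control of $\|\partial\partial_t^{k-1}\chW\|_{L^2(\bbR^3)}$ in Step 1.} You claim this follows because ``from the equation $\partial_t^k\chW=(\chcalB^0)^{-1}(\chcalK-\sum_j\chcalB^j\partial_j\partial_t^{k-1}\chW-\ldots)$ one can trade one time derivative for one space derivative.'' But that identity only controls the single combination $\sum_j\chcalB^j\partial_j\partial_t^{k-1}\chW$ in terms of $\partial_t^k\chW$ and the source; it does not yield the full gradient $\partial\partial_t^{k-1}\chW$. For a general symmetric hyperbolic system there is no such algebraic trade. The paper gets this term by the same div--curl mechanism you describe in Step~2: one derives the transport equations \eqref{eq:chdivtransporttemp1} for $\chdiv\chE,\chdiv\chH$, passes to the coordinate electric and magnetic fields $\barE,\barH$ (this is the content of the explicit computations \eqref{eq:chdivtemp1}--\eqref{eq:chcurltemp2}, which you omit), and then applies Lemmas~\ref{prop: H1 div curl sys 1}--\ref{prop: H1 div curl sys 2} on $\Sigma_t$. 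In other words, the div--curl structure is needed already for \eqref{eq:chWlinbound1}, not only for the piecewise estimate \eqref{eq:chWlinbound2}. Once you move that mechanism into Step~1 as well, your proof coincides with the paper's.
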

		\begin{proof}
			Since $\chcalB^j$, $j=1,2,3$, are symmetric, existence of a unique solution is standard once we show that $\chcalB^0$ is positive definite (with constant depending on $\kappa$). But this follows by observing that the eigenvalues of $\chcalB^0-I$ are $\lambda=0,\pm\sqrt{\sum_{\tilI=1}^3 (\che_\tilI^0)^2}$. The energy estimate~\eqref{eq:chWlinbound1} in the case $k=0$ is then also the standard energy estimate for the system \eqref{eq:chWlin1} as in Lemma~\ref{lem:Wenergy}. The higher order estimates in \eqref{eq:chWlinbound1} and \eqref{eq:chWlinbound2} are then proved by differentiating the equation and arguing as in Lemmas~\ref{prop: H1 div curl sys 1} and~\ref{prop: H1 div curl sys 2} and the proof of Proposition~\ref{prop:elliptic}. However, for this we first need to show that $\chE$ and $\chH$ satisfy appropriate elliptic equations, similar to \eqref{eq: div barE 3}, \eqref{eq: div H 2}, \eqref{eq: curl H 1}, and \eqref{eq: curl barE pre}, which in turn follow if we can prove the analogue of \eqref{Maxwell-inhomo1}, \eqref{Maxwell-inhomo2} (see the derivation of \eqref{eq: div barE 3}, \eqref{eq: div H 2}, \eqref{eq: curl H 1}, and \eqref{eq: curl barE pre}). For this we first write equation \eqref{eq:chWlin1} as (by definition)
			\begin{align*}
				\begin{split}
					\partial_t\chE+\chcurl \chH= \chcalK_\chE,\qquad \partial_t\chH-\chcurl \chE=\chcalK_\chH.
				\end{split}
			\end{align*}
			Applying $\chdiv$ to these equations and using the fact that $\chdiv\chcurl$ vanishes to leading order (in fact it can be shown to vanish completely, but since it will take extra effort to show this during the iteration, we do not use this stronger fact), we get
			\begin{align}\label{eq:chdivtransporttemp1}
				\begin{split}
					\partial_t\chdiv\chE= \chdiv\chcalK_\chE+[\partial_t,\chdiv]\chE+\calF_1(\chH,\chD \chH, \chGamma), \qquad \partial_t\chdiv\chH= \chdiv\chcalK_\chH+[\partial_t,\chdiv]\chH+\calF_2(\chE,\chD \chE, \chGamma),
				\end{split}
			\end{align}
			for appropriate functions $\calF_1$ and $\calF_2$. Next, let $\chF_{0I}=-\chF_{I0}=\chE_I$ and $\epsilon^{0IJK}\chF_{IJ}=\chH^K$, and define $F_{\mu\nu}=-F_{\nu\mu}$ by $F_{\mu\nu}=\cha_\mu^I\cha_\nu^J\chF_{IJ}$, where $\partial_\mu=\cha_\mu^I\che_I$. Let $\barE_i=F_{0i}$ and $\barH^k=-\frac{1}{2}\epsilon^{ijk}F_{ij}$. Then (recall that $\che_0=\partial_t$)
			\begin{align*}
				\begin{split}
					\chdiv\chE&=\sum_{\tilI=1}^3 (\chD_{\tilI} \chE_{\tilI}-\chGamma_{\tilI\tilI}^J\chE_J)=\sum_{\tilI,j=1}^3\sum_{\mu=0}^3\che^\mu_{\tilI}\partial_\mu(\che_{\tilI}^jF_{0j})-\sum_{\tilI=1}^3\chGamma_{\tilI\tilI}^J\chE_J\\
					&=\sum_{j,i=1}^3\chgamma^{ij}\partial_{i}\barE_{j}+\sum_{\tilI,j,i=1}^3\che_{\tilI}^i\barE_{j}\partial_i\che_{\tilI}^j+\sum_{\tilI=1}^3\che_{\tilI}^0\partial_t\chE_{\tilI}-\sum_{\tilI=1}^3\chGamma_{\tilI\tilI}^J\chE_J,
				\end{split}
			\end{align*}
			where we have defined $\chgamma^{ij}=\sum_{\tilI=1}^3\che_\tilI^i\che_\tilI^j$. Rearranging we get
			\begin{align}\label{eq:chdivtemp1}
				\begin{split}
					\sum_{j,i=1}^3\chgamma^{ij}\partial_{i}\barE_j=\chdiv\chE-\sum_{\tilI,j,i=1}^3\che_\tilI^i\barE_j\partial_i\che_\tilI^j-\sum_{\tilI=1}^3\che_\tilI^0\partial_t\chE_\tilI+\sum_{\tilI=1}^3\chGamma_{\tilI\tilI}^J\chE_J.
				\end{split}
			\end{align}
			To derive a divergence equation for $\barH$ note that for $\tilI,\tilJ,\tilK\in\{1,2,3\}$ we have $$\chD_{[\tilI}\chF_{\tilJ\tilK]}=-\chdiv\chH+\calF(\chGamma,\chF),$$ for an appropriate function $\calF$, and $$\chD_{[0}\chF_{IJ]}=\frac{1}{2}\epsilon_{0IJ K}\chcalK_\chH^K+\calF(\chGamma,\chF)$$ for a different function $\calF$. It follows that (here $i,j,k$ take values in $\{1,2,3\}$, while $I,J,K$ take values in $\{0,1,2,3\}$)
			\begin{align}\label{eq:chdivtemp2}
				\begin{split}
					\partial_k\barH^k&=-\frac{1}{2}\epsilon^{ijk}\partial_k F_{ij}=-\frac{1}{2}\epsilon^{ijk}\cha_k^K\chD_K(\cha_i^I\cha_j^J\chF_{IJ})=-\frac{1}{6}\epsilon^{ijk}\cha_i^I\cha_j^J\cha_k^K\chD_{[K}\chF_{IJ]}-\frac{1}{2}\epsilon^{ijk}\chF_{IJ}\partial_k(\cha_i^I\cha_j^J)\\
					&=\calF(\cha,\chdiv\chH, \chcalK_\chH)-\frac{1}{2}\epsilon^{ijk}\chF_{IJ}\partial_k(\cha_i^I\cha_j^J),
				\end{split}
			\end{align}
			for some function $\calF$. To derive curl equations for $\barH$ and $\barE$, note that using $\sum_{\tilI=1}^{3}\chD_\tilI\chF_{\tilI0}=\chdiv\chE$ and $\sum_{I=0}^{3}\epsilon_I\chD_I\chF_{IJ}=-\chcalK_\chE^J$, for $J=1,2,3$, we get
			\begin{align}\label{eq:chcurltemp1}
				\begin{split}
					\epsilon_J\che_J^\mu\che_J^\nu\partial_\nu F_{\mu i}=\epsilon_J\cha_i^I\chD_J\chF_{JI}+\epsilon_J\che_J^\mu\che_J^\nu\chF_{NI}\partial_\mu(\cha_\nu^N\cha_i^I)=\calG(\che, \chdiv\chE,\chcalK_\chE)+\epsilon_J\che_J^\mu\che_J^\nu\chF_{NI}\partial_\mu(\cha_\nu^N\cha_i^I),
				\end{split}
			\end{align}
			for some function $\calG$, while using $\chD_{[0}\chF_{IJ]}=\frac{1}{2}\epsilon_{0IJK}\chcalK_\chH^K+\calF(\chGamma, \chF)$,
			\begin{align}\label{eq:chcurltemp2}
					\begin{split}
						\partial_{[0}\chF_{ij]}=\frac{1}{2}\epsilon_{0IJK}\chcalK_\chH^K\cha_i^I\cha_j^J+\chF_{IJ}\partial_t(\cha_i^I\cha_j^J)+\chF_{0I}\partial_j\cha_i^I+\chF_{J0}\partial_i\cha^J_j+\cha_{i}^{I}\cha_{j}^{J}\calF(\chGamma,\chF).
					\end{split}
			\end{align}
			Equations~\eqref{eq:chcurltemp1} and~\eqref{eq:chcurltemp2} can be used exactly as in the derivation of~\eqref{eq: curl H 1} and~\eqref{eq: curl barE pre} to derive analogous curl equations for $\chE$ and $\chH$. Combining the resulting equations with the divergence equations~\eqref{eq:chdivtemp1} and~\eqref{eq:chdivtemp2}, we can then use Lemmas~\ref{prop: H1 div curl sys 1} and~\ref{prop: H1 div curl sys 2} and the standard energy estimate of Lemma~\ref{lem:Wenergy} to prove~\eqref{eq:chWlinbound2} and the cases $k\geq1$ in~\eqref{eq:chWlinbound1} by the same argument as in the proof of Proposition~\ref{prop:elliptic}. Here to estimate $\chdiv\chE$ and $\chdiv\chH$ on the right-hand side of \eqref{eq:chdivtemp1},~\eqref{eq:chdivtemp2},~\eqref{eq:chcurltemp1},~\eqref{eq:chcurltemp2}, we couple the elliptic system \eqref{eq:chdivtemp1}-\eqref{eq:chcurltemp2} with the transport equations \eqref{eq:chdivtransporttemp1}.
		\end{proof}
		The a priori estimates in Proposition~\ref{prop:apriori} and the existence results in Propositions~\ref{prop:weak2},~\ref{prop:weaksigma}, and~\ref{prop:chW1} form the basis of our  proof of Theorem~\ref{thm:main1}, which is by standard Picard iteration modeled on the proof of Theorem~1.1 in \cite[Section~4]{MSW1}.
		\begin{proof}[Proof of Theorem~\ref{thm:main1}]
			We start by setting up an iteration for the system defined by \eqref{eq:e0mu1}, ~\eqref{eq:etransport1}, \eqref{eq:Gamma01}, \eqref{eq:Gammatransport1}, \eqref{eq:g1}, \eqref{eq:BoxTheta1}, \eqref{eq:Thetabdry1}, \eqref{eq:Lambda1}, \eqref{eq:checkW1}, and showing the existence of a solution to the system \eqref{eq:geomit1}, \eqref{eq:Thetait1}, \eqref{eq:Lambdait1} defined below.  Uniqueness follows by similar estimates. After closing the iteration, we will describe how to go back and show that our solution solves the original equations \eqref{eq:relEuler4} and \eqref{eq:Einstein2}.
			
			The derived system for the iteration is described as follows. The geometric quantities will be defined using the equations
			\begin{align}\label{eq:geomit1}
				\begin{cases}
					&e_0^\mu=\frac{1}{\hatTheta^0}(\delta_0^\mu-\hatTheta^{\tilI}e_{\tilI}^\mu),\\
					&\partial_te_\tilI^\mu=-e_J^\mu D_\tilI\hatTheta^J-\hatTheta^J\Gamma_{\tilI J}^Ke_K^\mu,\\
					&\Gamma_{0J}^K=-\frac{\hatTheta^\tilI}{\hatTheta^0}\Gamma_{\tilI J}^K,\\
					&\partial_t\Gamma_{\tilI J}^K=\hatTheta^I\big(R^K_{\phantom{K}JI\tilI}+\Gamma_{MJ}^K(\Gamma_{I\tilI}^M-\Gamma_{I\tilI}^M)+\Gamma^K_{\tilI M}\Gamma^M_{IJ}-\Gamma^K_{IM}\Gamma^M_{\tilI J}\big)-\Gamma_{I J}^KD_\tilI\hatTheta^I,\\
					&\sum_{\nu=0}^3 \chcalB^\mu \partial_\mu \chW_{AB}=\chcalK_{AB},
				\end{cases}
			\end{align}
			where $\chcalK$ is defined as in \eqref{eq:checkW1} (which is in turn defined as in \eqref{eq:calIint1}, \eqref{eq:calIext1}, \eqref{eq:calJ1} using the frame $\{\che_I\}$ instead of $\{e_I\}$) and the fluid quantities using the equations 
			\begin{align}\label{eq:Thetait1}
				\begin{cases}
					\Box \Theta^I=(\frac{1}{2}-\sigma^2)\Theta^I+\epsilon_I\epsilon_J\epsilon_K\big(2\Gamma_{KI}^JD_K\Theta^J+2\Gamma_{KL}^J\Gamma_{KI}^J\Theta^L+\epsilon_J\Theta^J D_K\Gamma_{KI}^J+\epsilon_J\Theta^J\Gamma_{KI}^L\Gamma_{KL}^J),\qquad& \mathrm{in~}\Omega\\
					(\partial_t^2+\gamma D_n)\Theta^I=\frac{\epsilon_I\epsilon_K}{2\sigma^2}\Gamma_{KI}^J\Theta^JD_K\sigma^2-\frac{1}{2\sigma^2}D_I(\sqrt{\sigma^2}\partial_t\sigma^2)-\frac{1}{2\sigma^2}\partial_t\sigma^2\partial_t\Theta^I,\qquad&\mathrm{on~}\partial\Omega
				\end{cases},
			\end{align}
			for the $\Theta^I$, and 
			\begin{align}\label{eq:Lambdait1}
				\begin{cases}
					\Box \partial_t\sigma^2=2\partial_t\sigma^2-6\sigma^2\partial_t\sigma^2+\frac{6}{\sqrt{\sigma^2}}\epsilon_I\epsilon_J(D_I\Theta^J+\Gamma_{IK}^J\Theta^K)(D_ID_J\sigma^2+\epsilon_K\epsilon_J\Gamma_{IK}^J D_K\sigma^2)\\
					\phantom{\Box \partial_t\sigma^2=}+\frac{4}{\sqrt{\sigma^2}}\epsilon_I\epsilon_J\epsilon_K(D_I\Theta^J+\Gamma_{IM}^J\Theta^M)(D_I\Theta^J+\Gamma_{IN}^K\Theta^N)(D_K\Theta^J+\Gamma_{KP}^J\Theta^P)\\
					\phantom{\Box \partial_t\sigma^2=}-\frac{4}{\sqrt{\sigma^2}} \epsilon_I\epsilon_J\epsilon_K\epsilon_LR_{L I J K}(D_I\Theta^J+\Gamma_{IM}^J\Theta^M)(D_K\Theta^L+\Gamma_{KN}^L\Theta^N)+\frac{\epsilon_I}{\sigma^2} D_I\sigma^2 D_I\partial_t\sigma^2\\
					\phantom{\Box \partial_t\sigma^2=}-\frac{\epsilon_I\partial_t\sigma^2}{4\sigma^2}D_I\sigma^2 D_I\sigma^2-\frac{\partial_t\sigma^2}{2}-\sigma^2\partial_t\sigma^2-\frac{\partial_t\sigma^2}{\sigma^2}\epsilon_I\epsilon_J(D_I\Theta^J+\Gamma_{IK}^J\Theta^K)^2),~&\mathrm{in~}\Omega\\
					\partial_t\sigma^2=0,~ &\mathrm{on~}\partial\Omega
				\end{cases},
			\end{align}
			for $\partial_t\sigma^2$. Note that in \eqref{eq:geomit1} the first and third equations are algebraic definitions, while the second and fourth equations are transport equations. In the fluid equations $\gamma$ is defined as
			\begin{align*}
				\begin{split}
					\gamma=\frac{\sqrt{\epsilon_I(D_I\sigma^2)^2}}{2\sigma^2},
				\end{split}
			\end{align*}
			and $\Box$ is defined with respect to the metric $g$ defined by the relations
			\begin{align*}
				\begin{split}
					m_{IJ}= g_{\mu\nu}e_{I}^\mu e_J^\nu,\qquad (g^{-1})^{\mu\nu}=(m^{-1})^{IJ}e_I^\mu e_J^\nu.
				\end{split}
			\end{align*}
			The modified frame $\{\che_I\}$ is defined by the relations $\che_0=\partial_t$ and \eqref{eq:chedef1}, and we have set (the matrices $\calA^\tilI$ are as in \eqref{eq:Amatdef1})
			\begin{align*}
				\begin{split}
					\chcalB^0:=1+\sum_{\tilI=1}^3 \che_\tilI^0\calA^\tilI, \quad \chcalB^j:=\sum_{\tilI=1}^3\che_\tilI^j \calA^\tilI,~j=1,2,3.
				\end{split}
			\end{align*}
			The curvature $R$ is defined in terms of $\chW\equiv\chW_{AB}$ as follows. First, for fixed $A,B$, we write $\chW=(\chE,\chH)$ and for $\tilI=1,2,3$, define 
			\begin{align*}
				\begin{split}
					\chF_{0\tilI}=-\chF_{\tilI0}= \chE_\tilI,\quad-\chF_{12}=\chF_{21}=\chH^3,\quad -\chF_{23}=\chF_{32}=\chH^1,\quad \chF_{13}=-\chF_{31}=\chH^2.
				\end{split}
			\end{align*}
			To define $R$ in terms of $\chF$, for each $A,B,=0,1,2$ we define $\chX_A$ as in \eqref{eq:XAI1}, but in terms of $\{\che_I\}$ rather than $\{e_I\}$, that is,
			\begin{align*}
				\begin{split}
					\chX_A= \epsilon_I \chX_A^I \che_I,\quad \chX_A^I:=\frac{\delta_A^I-(\sum_K\epsilon_K(\chD_K\sigma^2)^{2})^{-1}\chD_A\sigma^2 \chD_I\sigma^2}{\sqrt{1-\epsilon_A(\sum_J \epsilon_J (\chD_J\sigma^2)^2)^{-1}(\chD_A\sigma^2)^2}},
				\end{split}
			\end{align*}
			and let
			\begin{align*}
				\begin{split}
					R(\che_I,\che_J,\chX_A,\chX_B)=-R(\che_I,\che_J,\chX_B,\chX_A)= \chF_{IJ}^{AB}.
				\end{split}
			\end{align*}
			By linearity, this defines $R(X,Y,X_A,X_B)$ for any $X,Y$. The components $R(\chX_A,\chX_B,\chX_C,n)$ are then defined by the requirement
			\begin{align*}
				\begin{split}
					R(\chX_A,\chX_B,\chX_C,n)=-R(\chX_A,\chX_B,n,\chX_C)=R(\chX_C,n,\chX_A,\chX_B),
				\end{split}
			\end{align*}
			and the components $R(\chX_A,n,\chX_B,n)$, $A\leq B$, by the requirement that
			\begin{align*}
				\begin{split}
					R(\chX_A,n,\chX_B,n)&=-R(n,\chX_A,\chX_B,n)=\chi_{\Omega}\left(\frac{1}{2}\delta_{AB}+g(\Theta^Ie_I,\chX_A)g(\Theta^Ie_I,\chX_B)\right)\\
					&\quad-\sum_{C=0}^2\epsilon_CR(\chX_A,\chX_C,\chX_B,\chX_C),
				\end{split}
			\end{align*}
			and finally the components $R(\chX_{B},n,\chX_{A},n)$, $A<B$, by the requirement that
				\begin{align*}
					R(\chX_{B},n,\chX_{A},n)=R(\chX_{A},n,\chX_{B},n).
				\end{align*}
			By linearity, this now defines $R(X,Y,Z,W)$ for any vectors $X,Y,W,Z$, and in particular $$R_{IJKL}=R(e_I,e_J,e_K,e_L).$$ 
			
			We denote by $\Theta^{(m)}$ (more precisely $\Theta^{I,(m)}$), $\Lambda^{(m)}$, and $\Sigma^{(m)}$ the iterates of $\Theta$ (more precisely $\Theta^I$), $\partial_t\sigma^2$, and $\sigma^2$ respectively. The iterates of the geometric quantities (frame, metric, connection coefficients, and curvature) are denotes by $e^{(m)}_I$, $g^{(m)}$, $\Gamma_{IJ}^{K,(m)}$, $\chW^{(m)}$ (more precisely $\chW_{AB}^{(m)}$). The zeroth iterates are defined simply by extending the initial data. Inductively, we define the $m$th iterate using equations \eqref{eq:geomit1}, \eqref{eq:Thetait1}, and \eqref{eq:Lambdait1}, where on the left-hand side we use the $m$th iterate of the unknowns, and on the right-hand side, as well as in the coefficients of $\Box$, $\gamma$, and $\chcalB$, we use the $m-1$ iterate. Here $\Sigma^{(m)}$ (corresponding to $\sigma^2$) is defined by the relation $\partial_t\Sigma^{(m)}=\Lambda^{(m)}$, and $\chcalB^{(m)}$, $g^{(m)}$, $R^{(m)}$, $\gamma^{(m)}$ are defined algebraically in terms of $\Theta^{(m)}, \Lambda^{(m)}, \Sigma^{(m)}, \Gamma^{(m)}, \chW^{(m)}$ as described above. Our first task is to show that the iterates have uniformly bounded energy. For this we define
			\begin{align*}
				\begin{split}
					\calE_k^{m}(T)&=\sup_{0\leq t \leq T}\sum_{\ell\leq k}(\|\partial\partial_t^\ell\Theta^{(m)}(t)\|_{L^2(\Omega_0)}^2+\|\partial\partial_t^\ell\Lambda^{(m)}(t)\|_{L^2(\Omega_0)}^2+\|\partial_t^\ell R^{(m)}\|_{L^2(\Omega_0)}^2+\|\partial_t^{\ell+1}\Theta^{(m)}(t)\|_{L^2(\partial\Omega_0)}^2)\\
					&\quad+ \int_0^T\|\partial\partial_t^\ell\Lambda^{(m)}\|_{L^2(\Omega_0)}^2\ud t.
				\end{split}
			\end{align*}
			We claim that if $T$ is sufficiently small then there are constants $A_k$, $k=0,\dots,M$ such that for all $m$
			\begin{align}\label{eq:claimittemp1}
				\begin{split}
					\calE_k^m(T)\leq A_k.
				\end{split}
			\end{align}
			In view of Propositions~\ref{prop:weak2},~\ref{prop:weaksigma}, and~\ref{prop:chW1}, the proof of this claim is the same as in Proposition~\ref{prop:apriori}. See also \cite[Proof of Theorem 1.1]{MSW1} for a similar argument in the case of the Minkowski background. Compared with the argument in \cite{MSW1}, the difference is that we now also need to show that the hypotheses of Proposition~\ref{prop:chW1} are satisfied. The subtle point here is that we need to make sure that $\chdiv\chcalK^{(m)}_{\chE^{(m)}}$  and $\chdiv\chcalK^{(m)}_{\chH^{(m)}}$ have the right regularity. Comparing with Propositions~\ref{prop:elliptic} and~\ref{prop:apriori} (or the linear estimates in Propositions~\ref{prop:weak2},~\ref{prop:weaksigma}, and~\ref{prop:chW1}), we see that the difficulty would be if two derivatives fall on $\chX^{(m)}_A$ after applying $\chdiv$ to the last equation in \eqref{eq:geomit1} (written in terms of $\chE^{(m)},\chH^{(m)}$ as in \eqref{eq:checkEH1}), so we need to show that no such terms appear. Comparing with \eqref{eq:calIint1} and \eqref{eq:calIext1}, the term to be considered for $\chdiv\chcalK_{\chE}$ is (here to simplify notation we drop the index $(m)$ denoting the iterate)
			\begin{align*}
				\begin{split}
					\epsilon_I\epsilon_J\epsilon_K\chR_{JKLI}(\chX_A^I\chD_K\chD_J\chX_B^L+\chX_B^I\chD_K\chD_J\chX_A^L).
				\end{split}
			\end{align*}
			But since, by construction, $\chR_{JKLI}=-\chR_{KJLI}$, this can be written as
			\begin{align*}
				\begin{split}
					\frac{1}{2}\epsilon_I\epsilon_J\epsilon_K\chR_{JKLI}(\chX_A^I[\chD_K,\chD_J]\chX_B^L+\chX_B^I[\chD_K,\chD_J]\chX_A^L),
				\end{split}
			\end{align*}
			which contains only one derivative of $\chX_A,\chX_B$. Similarly, for $\chdiv\chcalK_\chH$, comparing with \eqref{eq:calJ1} we need to consider
			\begin{align*}
					\begin{split}
						\epsilon^{LIJK}\chD_L(\chX_B^N\chR_{MN[JK}\chD_{I]}\chX_A^M)+\epsilon^{LIJK}\chD_L(\chX_A^N\chR_{MN[JK}\chD_{I]}\chX_B^M).
					\end{split}
			\end{align*}
			Again, using the antisymmetry of $\epsilon^{LIJK}$ we can replace every appearance of two derivatives on $X_A$, $X_B$ by a commutator. For instance, since $\epsilon^{LIJK}=-\epsilon^{KIJL}$,
			\begin{align*}
					\begin{split}
						\epsilon^{LIJK}\chX_B^N\chR_{MNIJ}\chD_L\chD_{K}\chX_A^M=\frac{1}{2}\epsilon^{LIJK}\chX_B^N\chR_{MNIJ}[\chD_L,\chD_{K}]\chX_A^M.
					\end{split}
			\end{align*}
			With these observations, we can now apply Propositions~\ref{prop:weak2},~\ref{prop:weaksigma},~\ref{prop:chW1},  the elliptic estimates \eqref{eq:Sobolev-estimate1}, \eqref{eq:lambda-energy}, \eqref{eq:chWlinbound2} contained in them (to bound lower order terms in $L^\infty$), and Lemma~\ref{lem:voblique} (to bound $\partial\partial_t^k\Theta$ in $L^2([0,T]\times\partial\Omega_0)$ as in the proof of Proposition~\ref{prop:apriori}), to inductively deduce \eqref{eq:claimittemp1}. Once the uniform bound \eqref{eq:claimittemp1} is available, we can consider the equations for the difference of the iterates $\Theta^{(m+1)}-\Theta^{(m)}$, $\Lambda^{(m+1)}-\Lambda^{(m)}$, $e_I^{(m+1)}-e_I^{(m)}$, $\Gamma^{(m+1)}-\Gamma^{(m)}$, and $\chW^{(m+1)}-\chW^{(m)}$ (which have zero initial data) to show the convergence of the iterates. The routine details are similar to the proof of \cite[Theorem~1.1]{MSW1}.
			
			Finally, we need to show that our derived solution satisfied the original equations \eqref{eq:relEuler4} and \eqref{eq:Einstein2}. For this we will derive homogeneous equations, of the forms already appearing in our a priori estimates, for geometric and fluid quantities which are expected to vanish. We refer to such quantities, to be written out below, as vanishing quantities. Having closed the iteration we now use the frame $\{e_A\}$ for the geometric quantities, but the calculations can be carried out the same way in $\{\che_I\}$ and the differences in the equations are lower order. The vanishing fluid quantities in our calculation are
			\begin{align}\label{def vanishing fluid}
				\begin{split}
					&\tilDelta:=\nabla_{I}\Theta^{I}\\& \omega_{IJ}:=\nabla_{I}\Theta_{J}-\nabla_{J}\Theta_{I},\\& S:=\Sigma+\Theta^{I}\Theta_{I}\\
					&X_{I}:=\Theta^{J}\nabla_{J}\Theta_{I}+\frac12\nabla_{I}\Sigma,\\ &Y_{I}:=\Theta^{J}\nabla_{J}\left(\Theta^{K}\nabla_{K}\Theta_{I}\right)-\frac12\left(\nabla^{J}\Sigma\right)\nabla_{J}\Theta_{I}+\frac12\nabla_{I}\Lambda,\\
					&A:=\Box\Sigma+2(\nabla^{I}\Theta^{J})(\nabla_{I}\Theta_{J})+(2\Sigma-1)\Sigma.
				\end{split}
			\end{align}
			These are expected to vanish in $\Omega$, with the vanishing of the first two quantities being precisely \eqref{eq:relEuler4}. The vanishing of the other quantities was used in arriving at our derived system \eqref{eq:collected1}, so it is natural that we need to treat them as unknowns at this point. Also note that we already know the vanishing of $Y_I$ on the boundary. Similarly, the vanishing geometric quantities whose vanishing is not already guaranteed by our construction are
			\begin{align}\label{def vanishing geom}
				\begin{split}
					& Q_{IJ}^{K}e_K:=-\Gamma_{IJ}^{K}e_K+\Gamma_{JI}^{K}e_k+[e_{I},e_{J}],\\
					& B_{IJKL}:=R_{[IJK]L},\\
					& \tilY_{ABCD}:=R_{ABCD}-R_{CDAB},\\
					&\Delta_{I}^{An}:=\nabla^{J}R_{JIAn}-\chi_{\Omega}\left(\Theta_{n}\nabla_{A}\Theta_{I}-\Theta_{A}\nabla_{n}\Theta_{I}\right),\\& Z^{An}_{IJK}:=\nabla_{[I}R_{JK]An},\\
					&\Delta_{I}^{AB}:=\nabla^{J}R_{JIAB}-\chi_{\Omega}\left(\Theta_{B}\nabla_{A}\Theta_{I}-\Theta_{A}\nabla_{B}\Theta_{I}\right),\\
					& Z^{An}_{IJK}:=\nabla_{[I}R_{JK]An},\\
					& Z^{AB}_{IJK}:=\nabla_{[I}R_{JK]AB},\\
					&\tilR_{AB}:=R_{AB}-\chi_{\Omega}\left(\Theta_{A}\Theta_{B}+\frac12g_{AB}\right),\quad \textrm{for}\quad A>B,\\
					&\tilR_{An}:=R_{An},\\
					&\tilR_{nn}:=R_{nn}-\frac12\chi_{\Omega}g_{nn},\\
					&\tilS^{K}{}_{M}{}_{IJ}:=R^{K}{}_{M}{}_{IJ}-\left(D_{I}\Gamma_{JM}^{K}-D_{J}\Gamma_{IM}^{K}\right)-\left(\Gamma_{IL}^{K}\Gamma_{JM}^{L}-\Gamma_{JL}^{K}\Gamma_{IM}^{L}\right)-\left(\Gamma_{JI}^{L}-\Gamma_{IJ}^{L}\right)\Gamma_{LM}^{K}.
				\end{split}
			\end{align} 
			Besides the Einstein equations \eqref{eq:Einstein2}, these vanishing quantities capture the symmetries of the curvature tensor, the differential Bianchi identities for the entire curvature tensor (not just when the last two components are tangential), the torsion free property of the connection, and the Ricci identity. As with the fluid quantities, the vanishing of these quantities was used in arriving at our derived system \eqref{eq:collected1}. We also allow arbitrary components instead of only tangential ones for these quantities by taking appropriate linear combinations, for instance to write $\tilY_{KMIJ}=R_{KMIJ}-R_{IJKM}$. When the precise choice of components is not important we simply write the defining letter without indices, for instance $S$, $Z$, or $\tilS$. We also use $Z^{An}$, $Z^{AB}$, $\Delta^{An}$, $\Delta^{AB}$ when the precise choice of lower components is not important. Note that using the definition of the curvature in our iteration, it can be checked directly that $\tilY$ and all components of $B$ except $B_{nACD}$ (and its permutations),  can be written as linear combinations of $\tilR$, but since $\tilY$ and $B$ come up naturally as  symmetries of the curvature tensor we have kept them as a separate vanishing quantities. Similarly, $\tilR_{nn}$ can be written algebraically in terms of the other Ricci components, but we have kept it as a separate unknown.
			
			By definition, the fluid and geometric vanishing quantities are zero initially, so to show that they vanish during the evolution we need to derive a system of differential equations relating them. For the fluid quantities, the system in similar to the corresponding one in \cite{MSW1} with the difference that now the geometric vanishing quantities also appear as source terms. A cumbersome but routine calculation (where it is easier to first use a covariant formulation using $\nabla= D+\Gamma$ as usual) gives the following equations, where we have used the notation $D_V=\Theta^JD_J$:
			\begin{align}\label{eq:vanishing1}
				\begin{cases}
					\Box \omega_{IJ}=F_{\omega,IJ}\qquad&\mathrm{in~}\Omega\\
					(D_V^2-\frac{1}{2}(\nabla^K\Sigma) D_K)\omega_{IJ}=F_{\omega,bdry, IJ}\qquad&\mathrm{on~}\partial\Omega
				\end{cases},
			\end{align}
			and
			\begin{align}\label{eq:vanishing2}
				\begin{cases}
					\Box Y_I = F_{Y,I}\qquad&\mathrm{in~}\Omega\\
					Y_I=0\qquad&\mathrm{on~}\partial\Omega
				\end{cases}.
			\end{align}
			Here the source term $F_{\omega}$ depends algebraically on
			\begin{align*}
				\begin{split}
					\Delta,\quad \omega,\quad \nabla\omega,\quad Q,\quad \nabla Q,\quad  \nabla X,
				\end{split}
			\end{align*}
			$F_{\omega,bdry}$ depends algebraically on
			\begin{align*}
				\begin{split}
					\omega, \quad D_V\omega,\quad X, \quad \nabla X, \quad \nabla Y,
				\end{split}
			\end{align*}
			and $F_Y$ depends algebraically on
			\begin{align*}
				\begin{split}
					\tilS,\quad  \nabla \partial_t\tilS, \quad \nabla \partial_t\tilY, \quad \Delta,\quad \nabla Q, \quad \omega,\quad \nabla\omega,\quad X,\quad\nabla X,\quad \nabla^2X,\quad A,\quad \nabla A.
				\end{split}
			\end{align*}
			The remaining vanishing fluid quantities can be related to $\omega$ and $Y$ using the transport equations
			\begin{align}\label{eq:vanishing3}
				\begin{split}
					\partial_t\tilDelta = F_\tilDelta,
					\quad\partial_t X_I=F_{X,I},
					\quad\partial_t S = F_S,
					\quad\partial_t A = F_A,
				\end{split}
			\end{align}
			where $F_\tilDelta$ depends algebraically on 
			\begin{align*}
				\begin{split}
					\nabla\omega,\quad\tilR,\quad Q,
				\end{split}
			\end{align*}
			$F_X$ depends algebraically on 
			\begin{align*}
				\begin{split}
					Y,\quad \omega,
				\end{split}
			\end{align*}
			$F_S$ depends algebraically on $X$, and $F_A$ depends algebraically on
			\begin{align*}
				\begin{split}
					\tilR,\quad X,\quad\nabla X,\quad B,\quad \omega.
				\end{split}
			\end{align*}
			For the geometric vanishing quantities, the main system of differential equations is for the unknowns $\Delta^{An}$ and $Z^{An}$. One can show that for each choice of $A$, the corresponding eight unknowns $\Delta^{An}_B$, $\Delta^{An}_n$, $Z^{An}_{012}$, $Z^{An}_{BVn}$, $B< C$, satisfy a coupled first order hyperbolic system, which can be used to prove $L^2$ and elliptic estimates. Similarly, since these quantities vanish across the boundary, this system can be differentiated to yield, after tedious calculations using similar considerations as in Section~\ref{subsec:frameeqs}, the following wave equations valid on $\cup_{0\leq t\leq T}\Sigma_t$:
			\begin{align}\label{eq:vanishing4}
				\begin{split}
					\Box Z^{An}_{IJK}= F_{Z,IJK}^{An},\qquad \Box \Delta^{An}_{I}= F_{\Delta,I}^{An},
				\end{split}
			\end{align}
			where $F_Z^{An}$ depends algebraically on
			\begin{align*}
				\begin{split}
					Z, \quad \nabla Z,\quad \chi_\Omega\omega, \quad \chi_{\Omega}\nabla\omega,\quad B, \quad \nabla B, \quad \Delta,\quad \nabla \Delta,
				\end{split}
			\end{align*}
			and $F_\Delta^{An}$ depends algebraically on
			\begin{align*}
				\begin{split}
					\Delta, \quad \nabla \Delta, \quad Z,\quad\nabla Z,\quad \chi_\Omega \omega,\quad \chi_\Omega\nabla\omega,\quad \tilY.
				\end{split}
			\end{align*}
			Finally, the remaining vanishing geometric quantities can be related to $Z$ and $\Delta$ using the following transport and first order hyperbolic systems:
			\begin{align}\label{eq:vanishing5}
				\begin{split}
					\partial_tQ_{IJ}^K=F_{Q,IJ}^K,\quad \partial_t\tilS_{IJKL}=F_{\tilS,IJKL},
				\end{split}
			\end{align}
			and (here $0, 1, 2$ are tangential components)
			\begin{align}\label{eq:vanishing6}
				\begin{split}
					&D_{0}B_{nACD}=F_{B,nACD}, \quad D_0 Z^{AB}_{IJK}= F^{AB}_{Z,IJK},\quad D_0 \Delta^{AB}_I= F_{\Delta,I}^{AB},\\
					&D_{0}\tilR_{nA}=F_{\tilR,nA},\quad D_{0}\tilR_{10}=F_{\tilR,10},\\
					&D_{0}\tilR_{20}-D_{1}\tilR_{21}=F_{\tilR,20},\quad D_{0}\tilR_{21}-D_{1}\tilR_{20}=F_{\tilR,21}.
				\end{split}
			\end{align}
			Here $F_Q$ depends algebraically on 
			\begin{align*}
				\begin{split}
					B,\quad \tilY,\quad \tilS,\quad Q,
				\end{split}
			\end{align*}
			$F_\tilS$ depends algebraically on 
			\begin{align*}
				\begin{split}
					Z,\quad \tilS,\quad \tilY,\quad \nabla \tilY \quad B,\quad Q,
				\end{split}
			\end{align*}
			$F_\tilB$ depends algebraically on $\Delta$, $F_\tilR$ depends algebraically on $\tilR$ and $Z$, $F_Z^{AB}$ and $F_\Delta^{AB}$ depend algebraically on $\tilY, \Delta, B, Z$. Equations \eqref{eq:vanishing1}, \eqref{eq:vanishing2}, \eqref{eq:vanishing3}, \eqref{eq:vanishing4}, \eqref{eq:vanishing5}, \eqref{eq:vanishing6} are the desired system of equations for the vanishing unknowns. By commuting $\partial_t$ derivatives with these equations and arguing as in  Section~\ref{sec:apriori}, it follows that the quantities appearing in \eqref{def vanishing fluid} and \eqref{def vanishing geom} vanish for all $t\leq T$ completing the proof of the theorem.
		\end{proof}
\bibliographystyle{plain}
\bibliography{ref}

\bigskip

\centerline{\scshape Shuang Miao}
\smallskip
{\footnotesize
	\centerline{School of Mathematics and Statistics, Wuhan University}
	\centerline{Wuhan, Hubei, 430072, China}
	\centerline{\email{shuang.m@whu.edu.cn}}
} 

\medskip

\centerline{\scshape Sohrab Shahshahani}
\medskip
{\footnotesize
	\centerline{Department of Mathematics and Statistics, University of Massachusetts}
	\centerline{Lederle Graduate Research Tower, 710 N. Pleasant Street,
		Amherst, MA 01003-9305, U.S.A.}
	\centerline{\email{sohrab@math.umass.edu}}
}
\end{document}